\DeclareFontFamily{U}{mathx}{\hyphenchar\font45}
\DeclareFontShape{U}{mathx}{m}{n}{
      <5> <6> <7> <8> <9> <10>
      <10.95> <12> <14.4> <17.28> <20.74> <24.88>
      mathx10
      }{}
\DeclareSymbolFont{mathx}{U}{mathx}{m}{n}
\DeclareMathAccent{\widecheck}{0}{mathx}{"71}
\DeclareMathAccent{\wideparen}{0}{mathx}{"75}
\DeclareFontFamily{U}{tipa}{}
\DeclareFontShape{U}{tipa}{m}{n}{<->tipa10}{}
\newcommand{\arc@char}{{\usefont{U}{tipa}{m}{n}\symbol{62}}}%
\newcommand{\arc}[1]{\mathpalette\arc@arc{#1}}
\newcommand{\arc@arc}[2]{%
  \sbox0{$\m@th#1#2$}%
  \vbox{
    \hbox{\resizebox{\wd0}{\height}{\arc@char}}
    \nointerlineskip
    \box0
  }%
}
\def\widebreve{\mathpalette\wide@breve}
\def\wide@breve#1#2{\sbox\z@{$#1#2$}%
     \mathop{\vbox{\m@th\ialign{##\crcr
\kern0.08em\brevefill#1{0.8\wd\z@}\crcr\noalign{\nointerlineskip}%
                    $\hss#1#2\hss$\crcr}}}\limits}
\def\brevefill#1#2{$\m@th\sbox\tw@{$#1($}%
  \hss\resizebox{#2}{\wd\tw@}{\rotatebox[origin=c]{90}{\upshape(}}\hss$}
\definecolor{blaze}{rgb}{0.00,0.420,0.00}
\DeclareRobustCommand\longtwoheadrightarrow{\relbar\joinrel\twoheadrightarrow}
\DoBrackets\schema{}{\vspace{-7.5pt}\begin{theorem*}[#1]\BODY\newline\noindent$\square$\end{theorem*}\vspace{-7.5pt} }\ignorespacesafterend\leavevmode\vspace{12.5pt}\newline
\DoBrackets\schema{}{\vspace{-7.5pt}\begin{theorem}[#1]\BODY\newline\noindent$\square$\end{theorem}\vspace{-7.5pt} }\ignorespacesafterend\leavevmode\vspace{12.5pt}\newline
\DoBrackets\schema{}{\vspace{-7.5pt}\begin{proposition*}[#1]\BODY\newline\noindent$\square$\end{proposition*}\vspace{-7.5pt} }\ignorespacesafterend\leavevmode\vspace{12.5pt}\newline
\DoBrackets\schema{}{\vspace{-7.5pt}\begin{proposition}[#1]\BODY\newline\noindent$\square$\end{proposition}\vspace{-7.5pt} }\ignorespacesafterend\leavevmode\vspace{12.5pt}\newline
\DoParens\schema{}{\vspace{-7.5pt}\begin{definition*}[#1]\BODY\end{definition*}\vspace{-7.5pt} }\ignorespacesafterend\leavevmode\vspace{12.5pt}\newline
\DoParens\addtocounter{definition}{-1}\schema{}{\vspace{-7.5pt}\begin{definition}[#1]\BODY\end{definition}\vspace{-7.5pt} }\ignorespacesafterend\leavevmode\vspace{7.5pt}\newline
\DoBrackets\schema{}{\vspace{-7.5pt}\begin{lemma*}[#1]\BODY\newline\noindent$\square$\end{lemma*}\vspace{-7.5pt} }\ignorespacesafterend\leavevmode\vspace{12.5pt}\newline
\DoBrackets\schema{}{\vspace{-7.5pt}\begin{lemma}[#1]\BODY\newline\noindent$\square$\end{lemma}\vspace{-7.5pt} }\ignorespacesafterend\leavevmode\vspace{12.5pt}\newline
\DoBrackets\schema{}{\vspace{-7.5pt}\begin{fact}[#1]\BODY\newline\noindent$\square$\end{fact}\vspace{-7.5pt} }\ignorespacesafterend\leavevmode\vspace{12.5pt}\newline
\DoBrackets\schema{}{\vspace{-7.5pt}\begin{fact*}[#1]\BODY\newline\noindent$\square$\end{fact*}\vspace{-7.5pt} }\ignorespacesafterend\leavevmode\vspace{12.5pt}\newline
\DoBrackets\schema{}{\vspace{-7.5pt}\begin{theorem*}[#1]\BODY\end{theorem*}\vspace{-6pt} }\ignorespacesafterend\leavevmode\vspace{12.5pt}\newline
\DoBrackets\schema{}{\vspace{-7.5pt}\begin{fact*}[#1]\BODY\end{fact*}\vspace{-6pt} }\ignorespacesafterend\leavevmode\vspace{12.5pt}\newline
\DoBrackets\schema{}{\vspace{-7.5pt}\begin{lemma*}[#1]\BODY\end{lemma*}\vspace{-6pt} }\ignorespacesafterend\leavevmode\vspace{12.5pt}\newline
\DoBrackets\schema{}{\vspace{-7.5pt}\begin{proposition*}[#1]\BODY\end{proposition*}\vspace{-6pt} }\ignorespacesafterend\leavevmode\vspace{12.5pt}\newline
\DoBrackets\addtocounter{theorem}{-1}\schema{}{\vspace{-7.5pt}\begin{theorem}[#1]\BODY\end{theorem}\vspace{-6pt} }\ignorespacesafterend\leavevmode\vspace{12.5pt}\newline
\DoBrackets\addtocounter{theorem}{-1}\schema{}{\vspace{-7.5pt}\begin{fact}[#1]\BODY\end{fact}\vspace{-6pt} }\ignorespacesafterend\leavevmode\vspace{12.5pt}\newline
\DoBrackets\addtocounter{theorem}{-1}\schema{}{\vspace{-7.5pt}\begin{lemma}[#1]\BODY\end{lemma}\vspace{-6pt} }\ignorespacesafterend\leavevmode\vspace{12.5pt}\newline
\DoBrackets\addtocounter{theorem}{-1}\schema{}{\vspace{-7.5pt}\begin{proposition}[#1]\BODY\end{proposition}\vspace{-6pt} }\ignorespacesafterend\leavevmode\vspace{12.5pt}\newline
\DoParens\schema{}{\vspace{-7.5pt}\begin{observation*}[#1]\BODY\end{observation*}\vspace{-7.5pt} }\ignorespacesafterend\leavevmode\vspace{12.5pt}\newline
\theoremstyle{theorem}
\newtheorem*{theorem*}{Theorem}%
\newtheorem*{proposition*}{Proposition}
\newtheorem{theorem}{Theorem}
\newtheorem{proposition}[theorem]{Proposition}
\newtheorem{lemma}[theorem]{Lemma}
\newtheorem*{lemma*}{Lemma}
\newtheorem{fact}[theorem]{Fact}
\newtheorem*{fact*}{Fact}
\newtheorem{definition}{Definition}
\newtheorem*{definition*}{Definition}
\newtheorem*{example*}{Example}
\newtheorem*{claim*}{Claim}
\newtheorem*{result*}{Result}
\newtheorem*{observation*}{Observation}
\newtheorem*{question*}{Question}
\newtheorem*{direction*}{Direction}
\newtheorem*{conjecture*}{Conjecture}
\theoremstyle{remark}
\newcommand{\BC}{\mathbb C}
\newcommand{\BZ}{\mathbb Z}
\newcommand{\BN}{\mathbb N}
\DeclareMathOperator{\img}{Img}
\DeclarePairedDelimiter\wangle{\langle}{\rangle}
\newcommand{\nc}{\newcommand}
\nc{\on}{\operatorname}
\nc{\spec}{\on{Spec}}
\DeclarePairedDelimiter\pr{(}{)}
\DeclareMathOperator{\Spec}{Spec}
\renewcommand{\spec}{\Spec}
\newcommand{\ol}[1]{\overline{#1}}
\nc{\wt}[1]{\widetilde{#1}}
\nc{\wh}[1]{\widehat{#1}}
\nc{\pd}{\partial}
\nc{\cal}[1]{\mathcal{#1}}
\renewcommand{\frak}[1]{\mathfrak{#1}}
\nc{\scr}[1]{\mathscr{#1}}
\nc{\tsubseteq}[1]{\overset{\textnormal{#1}}{\subseteq}}
\nc{\tsupseteq}[1]{\overset{\textnormal{#1}}{\supseteq}}
\nc{\tsubset}[1]{\overset{\textnormal{#1}}{\subset}}
\nc{\tsupset}[1]{\overset{\textnormal{#1}}{\supset}}
\nc{\vphi}{\varphi}
\nc{\beau}{\displaystyle}
\nc{\existss}{\exists\;}
\nc{\foralls}{\forall\;}
\DeclarePairedDelimiter\set{\{}{\}}
\nc{\ov}[1]{\overrightarrow{#1}}
\nc{\cdotsc}{\cdots\hspace{-0.1pt}}
\nc{\D}{\textnormal{D}}
\nc{\tn}[1]{\textnormal{#1}}
\nc{\lto}{\longrightarrow}
\nc{\lmto}{\longmapsto}
\nc{\sm}{\setminus}
\nc{\scomp}{\textsf{c}}
\DeclareMathOperator{\cnt}{cnt}
\nc{\ncnt}{\lnot\cnt}
\nc{\ucong}{\overset{!}{\cong}}
\nc{\tcong}[1]{\overset{#1}{\cong}}
\nc{\coker}{\on{Coker}}
\newcommand{\wc}[1]{\widecheck{#1}}
\DeclarePairedDelimiter\floor{\lfloor}{\rfloor}
\nc{\eps}{\varepsilon}
\nc{\BS}{\mathbb{S}}
\nc{\BT}{\mathbb{T}}
\nc{\SL}{\mathscr{L}}
\nc{\SC}{\mathscr{C}}
\DeclareMathOperator{\tor}{Tor}
\nc{\wa}[1]{\wideparen{#1}}
\nc{\qidl}{\frak{q}}
\nc{\nidl}{\frak n}
\DeclareMathOperator{\Frac}{Frac}
\nc{\te}[1]{\textnormal{#1}}
\nc{\imps}{\quad\quad\ \ \,}
\nc{\comp}{\complement}
\nc{\van}{\cal V}
\nc{\ide}{\cal I}
\nc{\nfrac}[2]{\nicefrac{#1}{#2}}
\nc{\snfrac}[2]{\scalebox{1.35}{$\nfrac{#1}{#2}$}}
\nc{\mnfrac}[2]{\scalebox{1.4}[1.5]{$\nfrac{#1}{#2}$}}
\nc{\inj}{\hookrightarrow}
\nc{\surj}{\twoheadrightarrow}
\nc{\linj}{\longhookrightarrow}
\nc{\lsurj}{\longtwoheadrightarrow}
\nc{\four}{\cal F}
\nc{\urot}[1]{\mathbin{\rotatebox[origin=c]{90}{$#1$}}}
\nc{\drot}[1]{\mathbin{\rotatebox[origin=c]{-90}{$#1$}}}
\nc{\vrot}[2]{\mathbin{\rotatebox[origin=c]{#2}{$#1$}}}
\nc{\wb}[1]{\widebreve{#1}}
\nc{\BH}{\mathbb{H}}
\nc{\T}{\textnormal{T}}
\nc{\N}{\vrot{\T}{180}\!}
\nc{\blt}{\bullet}
\nc{\lbd}{\lambda}
\renewcommand{\tt}[1]{\texttt{#1}}
\renewcommand{\sf}[1]{\mathsf{#1}}
\nc{\sq}{{\mathop{\square}}}
\nc{\onl}[1]{\operatorname*{#1}}
\nc{\bigger}[1]{\scalebox{1.5}{$#1$}}
\nc{\tbt}[4]{\begin{pmatrix}#1&#2\\#3&#4\end{pmatrix}}
\nc{\thbth}[9]{\begin{pmatrix}#1&#2&#3\\#4&#5&#6\\#7&#8&#9\end{pmatrix}}
\nc{\tbo}[2]{\begin{pmatrix}#1\\#2\end{pmatrix}}
\nc{\thbo}[3]{\begin{pmatrix}#1\\#2\\#3\end{pmatrix}}
\nc{\tsq}[3]{\tensor{{#1}}{^{#2}_{#3}}}
\nc{\rnk}{\on{rnk}}
\nc{\Ad}{\on{Ad}}
\nc{\ad}{\on{ad}}
\nc{\glie}{\frak{g}}
\nc{\SO}{\on{SO}}
\nc{\sk}{\on{Sk}}
\nc{\snakeanchor}{\ar[draw=none]{d}[name=X, anchor=center]{}}
\nc{\snakearrow}[1]{\ar[rounded corners,
	to path={ -- ([xshift=2ex]\tikztostart.east)
		|- (X.center) \tikztonodes
		-| ([xshift=-2ex]\tikztotarget.west)
		-- (\tikztotarget)}]{dll}{#1}}
\nc{\cosnakearrow}[1]{\ar[rounded corners,
	to path={ -- ([xshift=-2ex]\tikztostart.west)
		|- (X.center) \tikztonodes
		-| ([xshift=2ex]\tikztotarget.east)
		-- (\tikztotarget)}]{urr}{#1}}
\nc{\colim}{\on*{colim}}
\nc{\sing}{\on{Sing}}
\newcommand{\dircolim@}[2]{%
  \vtop{\m@th\ialign{##\cr
    \hfil$#1\operator@font colim$\hfil\cr
    \noalign{\nointerlineskip\kern1.5\ex@}#2\cr
    \noalign{\nointerlineskip\kern-\ex@}\cr}}%
}
\newcommand{\dircolim}{%
  \mathop{\mathpalette\dircolim@{\rightarrowfill@\textstyle}}\nmlimits@
}
\newcommand{\xdashrightarrow}[2][]{\ext@arrow 0359\rightarrowfill@@{#1}{#2}}
\newcommand{\xdashleftarrow}[2][]{\ext@arrow 3095\leftarrowfill@@{#1}{#2}}
\newcommand{\xdashleftrightarrow}[2][]{\ext@arrow 3359\leftrightarrowfill@@{#1}{#2}}
\def\rightarrowfill@@{\arrowfill@@\relax\relbar\rightarrow}
\def\leftarrowfill@@{\arrowfill@@\leftarrow\relbar\relax}
\def\leftrightarrowfill@@{\arrowfill@@\leftarrow\relbar\rightarrow}
\def\arrowfill@@#1#2#3#4{%
  $\m@th\thickmuskip0mu\medmuskip\thickmuskip\thinmuskip\thickmuskip
   \relax#4#1
   \xleaders\hbox{$#4#2$}\hfill
   #3$%
}
\nc{\lcm}{\on{lcm}}
\nc{\Stab}{\on{Stab}}
\nc{\Nbhd}{\on{Nbhd}}
\nc{\Lie}{\on{Lie}}
\nc{\gl}{\on{\mathfrak{gl}}}
\nc{\Gr}{\on{Gr}}
\nc{\act}{\on{act}}
\nc{\ev}{\on{ev}}
\nc{\Sq}{\on{Sq}}
\nc{\ext}{\on{Ext}}
\nc{\Ext}{\on{Ext}}
\nc{\Dist}{\on{Dist}}
\nc{\dlie}{\frak{d}}
\nc{\hlie}{\frak{h}}
\nc{\pt}{\te{pt}}
\nc{\qedcheck}{\vspace{-1em}\begin{flushright}$\checkmark\quad$\end{flushright}\vspace{-0.5em}}
\nc{\lemproof}[1]{\smallskip\noindent\underline{#1:}}
\nc{\F}{\te{F}}
\nc{\jota}{\jmath}
\nc{\Tor}{\tor}
\nc{\Fl}{\on{Fl}}
\nc{\prob}{\on{prob}}
\nc{\didl}{\frak{d}}
\nc{\Rep}{\on{\sf{Rep}}}
\nc{\Irrep}{\on{\sf{irRep}}}
\nc{\Res}{\on{Res}}
\nc{\Ind}{\on{Ind}}
\nc{\aut}{\on{aut}}
\nc{\Der}{\on{Der}}
\nc{\so}{\on{\frak{so}}}
\nc{\Img}{\img}
\nc{\Sidl}{\frak{S}}
\nc{\Slie}{\frak{S}}
\nc{\iita}{\imath}
\nc{\hidl}{\hlie}
\nc{\blie}{\frak{b}}
\nc{\bidl}{\frak{b}}
\nc{\nlie}{\nidl}
\nc{\TQ}{\text{Q}}
\nc{\tq}{\text{q}}
\nc{\Q}{\text{Q}}
\nc{\larr}{\leftarrow}
\nc{\darr}{\downarrow}
\nc{\nwarr}{\nwarrow} 
\nc{\searr}{\searrow} 
\nc{\Tot}{\on{Tot}}
\nc{\twt}{\on{wt}}
\nc{\weit}{\on{wt}}
\nc{\sfcong}[1]{\overset{\sf{#1}}{\cong}}
\nc{\vtheta}{\vartheta}
\nc{\JH}{\on{JH}}
\nc{\typ}{\on{typ}}
\nc{\alie}{\frak{a}}
\nc{\rem}{\on{rem}}
\nc{\rmod}{\on{mod}}
\nc{\Ray}{\on{Ray}}
\nc{\Col}{\on{Col}}
\DeclarePairedDelimiter\bigpr{\big(}{\big)}
\DeclarePairedDelimiter\Bigpr{\Big(}{\Big)}
\nc{\llra}{\longleftrightarrow}
\nc{\Llra}{\Longleftrightarrow} 
\nc{\lla}{\longleftarrow} 
\nc{\lra}{\longrightarrow} 
\nc{\Lra}{\Longrightarrow}  
\newcommand\sqplus{\mathbin{\ooalign{$\sqcup$\cr%
   \hfil\raise0.42ex\hbox{$\scriptscriptstyle+$}\hfil\cr}}}
\newcommand\sqminus{\mathbin{\ooalign{$\sqcup$\cr%
   \hfil\raise0.42ex\hbox{$\scriptscriptstyle-$}\hfil\cr}}}
\newcommand\uminus{\mathbin{\ooalign{$\cup$\cr%
   \hfil\raise0.42ex\hbox{$\scriptscriptstyle-$}\hfil\cr}}}
\nc{\bsfrac}[2]{\scalebox{1.4}[1.5]{$\sfrac{#1}{#2}$}}
\newcommand\quot[2]{
        \mathchoice
            {
                \text{\raise1ex\hbox{$#1$}\Big/\lower1ex\hbox{$#2$}}%
            }
            {
                #1\,/\,#2
            }
            {
                #1\,/\,#2
            }
            {
                #1\,/\,#2
            }
}
\title{On Eventually Periodic Sets as Minimal Additive Complements} 
\pgfplotsset{compat=1.14}
\begin{document}

\maketitle

\vspace*{-0.25in}
\centerline{Fan Zhou}
\centerline{Harvard College}
\centerline{\href{mailto:fanzhou@college.harvard.edu}{{\tt{ fanzhou@college.harvard.edu}}}}
\vspace*{0.15in}



\footnotesize{We say a subset $C$ of an abelian group $G$ \textit{arises as a minimal additive complement} if there is some other subset $W$ of $G$ such that $C+W=\{c+w:c\in C,\ w\in W\}=G$ and such that there is no proper subset $C'\supset C$ such that $C'+W=G$. In their recent paper, Burcroff and Luntzlara studied, among many other things, the conditions under which ``eventually periodic sets'', which are finite unions of infinite (in the positive direction) arithmetic progressions and singletons, arise as minimal additive complements in $\BZ$. In the present paper we shall study this question further. We give, in the form of bounds on the period $m$, some sufficient conditions for an eventually periodic set to be a minimal additive complement; in particular we show that ``all eventually periodic sets are eventually minimal additive complements''. Moreover, we generalize this to a framework in which ``patterns'' of points are projected down to $\BZ$, and we show that all sets which arise this way are eventually minimal additive complements. We also introduce a formalism of formal power series, which serves purely as a bookkeeper in writing down proofs. Through our work we are able to answer a question of Burcroff and Luntzlara in a large class of cases.}

\normalsize
\section{Introduction}
    The setting of the question is as follows. For $C$ and $W$ subsets of an abelian group $G$, we say that $C$ is an \textit{additive complement} to $W$ if the Minkowski sum $C+W$ is equal to $G$, i.e. if
    \[G=C+W\coloneqq \{c+w:c\in C,\ w\in W\}.\]
    $C$ is a \textit{minimal additive complement} (or \textit{MAC}) to $W$ if there is no proper subset of $C$ which is an additive complement to $W$. We say $C$ \textit{arises as a MAC} (or \textit{is a MAC}) if there exists a $W$ to which $C$ is a MAC.
 
    In particular, we will be interested in sets which are called ``eventually periodic sets'', which are defined as follows: For $S\subseteq\BZ$, let $S_{/m}$ denote the image in $\BZ/m\BZ$ under the standard projection. An \textit{eventually periodic set} of period $m$ is a set of integers of form\footnote{Here we adopt the standard where $\BN$ includes 0, i.e. $\BN=\BZ_{\ge 0}$.}
    \[(m\BN+A)\cup B\cup F\]
    where $A$, $B$, and $F$ are finite, $A$ is nonempty, $B_{/m}\subseteq A_{/m}$, and $F_{/m}\cap A_{/m}=\emptyset$.

    In this paper, we shall study the conditions under which eventually periodic sets arise as MACs. We will show two main theorems, namely Theorems 4 and 7, the latter of which is stated in a new general framework of ``patterns'' which we introduce, and use them to deduce several other results, for example that 
    \begin{result*}[Proposition 1]
        Any eventually periodic set $C=m\BN\cup B\cup\{f\}$ (where $B$ is a finite subset of $\BZ$ such that $b\equiv 0\mod m$ for all $b\in B$ and $f\not\equiv 0\mod m$) arises as a MAC in $\BZ$.
    \end{result*}
    \noindent and that 
    \begin{result*}[Proposition 2]
        Any $C=m\BN\cup F$ (for $F$ in a single congruence class mod $m$) arises as a MAC in $\BZ$.
    \end{result*}
    The two main theorems are of roughly the following form: if there exists some set cover of $m\BN\cup B$ (or more generally a congruence class mod $m$ in which $C$ has infinitely many points) satisfying certain conditions, and if $m$ is greater than a certain bound, then $C$ arises as a MAC. In particular, roughly speaking, ``all eventually periodic sets are eventually MACs''. We also introduce a formalism of ``formal power series'' to help reduce the process of checking the proofs of such statements to routine calculations. We should however issue a disclaimer that there is no new content in these formal power series, and that no tools or clever tricks from the broader theory of generating functions/formal power series will be utilized; these formal power series here serve purely as bookkeepers.
    
    
    \subsection{Background and Motivation}
    Minimal additive complements were introduced in 2011 as an arithmetic analogue to the metric concept of \textit{$h$-nets} in groups by Nathanson [N], who showed that every nonempty finite subset of $\BZ$ has a MAC, and moreover that every additive complement of such a set contains a MAC. The question of which subsets of $\BZ$ have MACs has since been studied by many; for example, Chen and Yang [CY] showed in 2012 that all subsets of $\BZ$ which are unbounded both above and below have MACs. Kiss, S\'andor, and Yang [KSY] in 2019 introduced the concept of ``eventually periodic sets'' and studied the question of when these sets have MACs. 
    
    The natural ``inverse problem'' is then to study which subsets of $\BZ$ arise as MACs. This study was initiated by Kwon [K] in 2019, who showed that every nonempty finite set in $\BZ$ arises as a MAC. Alon, Kravitz, and Larson [AKL] extended this further in 2020 and showed that, in any finite abelian group $G$, any nonempty subset $C$ with size bounded above by some constant depending on $|G|$ will arise as a MAC. Moreover, they showed that any nonempty finite subset of an infinite abelian group arises as a MAC. Later in 2020, Biswas and Saha [BSb] generalized this even further and showed that, for any group $G$ (abelian or not), any nonempty finite subset $C$ with $|G|>|C|^5-|C|^4$ will arise as a MAC; in particular, any nonempty finite subset of any infinite group will arise as a MAC. Also in 2020, Biswas and Saha [BSa] derived some conditions for subsets to \textit{not} arise as MACs in an arbitrary group; for example, their results show that $(\{3, 5, 7, 9, 11\} + 12\BZ) \cup \{p\te{ prime} : p \equiv 1 \mod 12\}$ is \textit{not} a MAC in $\BZ$.

    Our motivation is as follows. In 2020, Burcroff and Luntzlara [BL] studied (among many other things) the eventually periodic sets of Kiss, S\'andor, and Yang and the question of when do they arise as MACs. They showed that there are three certain necessary conditions for an eventually periodic set to arise as a MAC, whose statements are rather technical and which we therefore skip here. In the case that $m$ is prime, they gave a fourth necessary condition. They also showed that, in certain circumstances, namely for $m$ a prime congruent to 2 modulo 3 and for $A_{/m}$ and $F_{/m}$ of certain prescribed sizes ($\frac{m+1}{3}$ and $1$, respectively), these necessary conditions are also sufficient. Using these facts in conjunction, they showed that any set of form $2\BN\cup B\cup F$ arises as a MAC if and only if $2\BZ\sm(2\BN\cup B)=F+W$ for some $W\subseteq\BZ$. In conclusion, BL have given necessary and sufficient conditions for eventually periodic sets of prime period congruent to 2 mod 3 and with $|F_{/m}|=1$ and $|A_{/m}|=\frac{p+1}{3}$ to arise as MACs. They also simplified these conditions to something very concise and concrete in the case of $m=2$. A natural question to ask then is if there are other circumstances in which an eventually periodic set $C$ arises as a MAC. We will attempt to treat this direction in this paper.
    
    By venturing in this direction, we are able to answer an open question of Burcroff and Luntzlara for a large number of cases. At the end of their paper [BL], they raised the following question:
    \begin{question*}
        Which sets of the form $C_1\cup (-C_2)$, where $C_1$ and $C_2$ are eventually periodic sets of integers, arise as minimal additive complements?
    \end{question*}
\noindent    Our results (in particular, Theorem 7) will show that a large class of such sets do indeed rise as minimal additive complements.
    
    Let us briefly explain why we study the objects we do in this paper. The existence of a nonempty $F$ is crucial, as it allows us to set up so-called ``dependent elements'' in our constructions later (to be explained later; roughly this is just to ensure that all elements of the eventually periodic set are necessary). Without this $F$, it is for example easy to see that $\BN$ is not a MAC in $\BZ$. Similarly, our results here are true roughly because if $m$ is sufficiently large, then there is ``enough space to maneuver'' in setting up the ``dependent elements''.

\section{Prelude}
\subsection{Preliminary Definitions}
    First some brief notes on convention: we will take the ``natural numbers'' $\BN=\BZ_{0+}=\{n\in\BZ:n\ge 0\}$ to include zero. Negative integers are denoted $\BZ_-$. We will write $\on{mod}_nk$ to denote the remainder of $k$ when divided by $n$. Some further notes on hats: throughout this paper we will decorate many of our symbols with hats, namely the widehat $\wh{\sq }$, the overline $\ol{\sq }$, and the widetilde $\wt{\sq }$; our philosophy is that the widehat denotes lifts, the overline is any generic marker (preferably to do with some ``natural'' modification), and the widetilde denotes quotients. 

    Recall from the introduction that the general setting of the question is as follows: 
    \begin{definition}
    For $C$ and $W$ subsets of an abelian group $G$, we say $C$ is an \textit{additive complement} to $W$ if
    \[G=C+W\coloneqq \{c+w:c\in C,w\in W\}.\]
    
    $C$ is a \textit{minimal additive complement} (shortened to \textit{MAC}) to $W$ if there is no proper subset of $C$ which is an additive complement to $W$.
    
    We say $C$ \textit{arises as a MAC} (or \textit{is a MAC}) if there exists a $W$ to which $C$ is a MAC.
    
    For $W=\{w\}$ a singleton, we will denote
    \[C+w\coloneqq C+\{w\}.\]
    
    If $A\subseteq S\subseteq G$ are subsets such that there exists $B\subseteq G$ with
    \[S=A+B,\]
    then we say $S$ is \textit{coverable by} $A$.
    \end{definition}
    
    For this paper we will be concerned mostly with the case of $G=\BZ$. There, many results are known already; for example in this paper we will utilize the theorem of Kwon [K] mentioned in the introduction:
    \begin{theorem*}[K]
    All finite subsets of $\BZ$ arise as MACs.
    \end{theorem*}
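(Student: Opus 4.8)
The plan is to produce, for an arbitrary nonempty finite set $C=\{c_1<c_2<\cdots<c_k\}\subseteq\BZ$, an explicit complement $W$ with $C+W=\BZ$ in which every element of $C$ is indispensable. Since being a MAC is translation invariant and the case $k=1$ is immediate (take $W=\BZ$), I would assume $k\ge 2$ and put $d:=c_k-c_1\ge 1$. First I would record the right reformulation of minimality: $C$ is \emph{not} a MAC for $W$ iff some $c_i$ can be dropped, which happens iff $c_i+W\subseteq\bigcup_{j\ne i}(c_j+W)$, i.e.\ iff for every $w\in W$ there is some $j\ne i$ with $w+c_i-c_j\in W$. So $C$ is a MAC for $W$ precisely when, for each $i$, there exists a \emph{witness} $w_i\in W$ with $w_i+c_i-c_j\notin W$ for all $j\ne i$; then the point $w_i+c_i$ is covered by $C+W$ only through $c_i$, so deleting $c_i$ breaks the complement.

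This dictates the construction: choose the witnesses first and then define $W$ to forbid exactly what would ruin them. I would pick widely spaced integers $b_1,\dots,b_k$, say $|b_i-b_{i'}|>5d$ for $i\ne i'$ (e.g.\ $b_i=(5d+1)i$), set
\[ F \;=\; \bigcup_{i=1}^{k}\{\,b_i+c_i-c_j \;:\; 1\le j\le k,\ j\ne i\,\}, \]
and let $W:=\BZ\setminus F$. Writing $F_i$ for the $i$-th block of this union, note $F_i\subseteq[\,b_i+c_i-c_k,\ b_i+c_i-c_1\,]$, an interval of length $d$, and the spacing of the $b_i$ keeps these $k$ intervals pairwise more than $3d$ apart. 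For each $i$ the candidate witness $w_i:=b_i$ lies in $W$: if $b_i=b_{i'}+c_{i'}-c_j$ then either $i'=i$, forcing $c_i=c_j$ and hence $j=i$, or $i'\ne i$, contradicting the spacing. And by construction $b_i+c_i-c_j\in F$ for every $j\ne i$, so $w_i$ is a genuine witness and $b_i+c_i$ is a private point of $c_i$. Thus minimality of $C$ is built in, and the only remaining point is that $W$ is a complement at all.

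That check is short: $n\notin C+W$ would mean $n-c_\ell\in F$ for every $\ell$, i.e.\ the whole $k$-element set $n-C$ sits inside $F$. But $n-C$ lies in an interval of length $d$, and such an interval can meet at most one of the blocks $F_i$ (two of them are separated by more than $3d>d$); hence $n-C\subseteq F$ would force $n-C\subseteq F_{i_0}$ for a single $i_0$, impossible since $|n-C|=k>k-1=|F_{i_0}|$. Therefore $C+W=\BZ$ and $C$ is a MAC. I expect the ``hard part'' here to be nothing more than bookkeeping --- fixing the spacing constant and verifying the interval estimates carefully; there is no deeper difficulty, essentially because the finiteness of $C$ makes $F$ finite, so $W$ is cofinite and coverage is almost automatic, while the tiny deficiencies of $W$ are placed exactly where they turn each $c_i$ into a necessary element.
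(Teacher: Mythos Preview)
Your proof is correct. The paper, however, does not give its own proof of this statement: it is quoted from Kwon [K] and used throughout as a black box (see the paragraph following Definition~2 and the invocations in the proofs of Theorems~4 and~7). So there is no in-paper argument to compare against.

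For the record, your construction is clean and the verifications go through as written. The one point worth making explicit is the chain of inequalities showing the blocks $F_i$ are well separated: with $|b_i-b_{i'}|>5d$ and $F_i\subseteq[b_i+c_i-c_k,\,b_i+c_i-c_1]$, the gap between the intervals carrying $F_i$ and $F_{i'}$ is at least $|b_i-b_{i'}|-|c_i-c_{i'}|-(c_k-c_1)>5d-d-d=3d$, which is indeed more than the diameter $d$ of $n-C$, so the single-block containment $n-C\subseteq F_{i_0}$ follows. The pigeonhole finish $|n-C|=k>k-1\ge|F_{i_0}|$ is then immediate. Your handling of the degenerate case $k=1$ and the implicit nonemptiness assumption is also appropriate (the paper's statement should really read ``nonempty finite subsets'').
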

    Closely related to this is a lemma by Burcroff-Luntzlara [BL] which we shall also employ later,  whereupon we shall call it the ``BL lemma''.
    \begin{lemma*}[BL]
    For a fixed finite set $F\subset\BZ$ and $W\subseteq\BZ$ such that $F+W\supset\BN$, there exists a set $W'\subseteq\BZ$ such that $F+W'=F+W$ and $F'+W'\neq F+W'$ for any proper subset $F'\subset F$.
    \end{lemma*}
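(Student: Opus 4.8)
The plan is to reduce the statement to equipping each element of $F$ with a \emph{private witness}, and then to construct $W'$ by a small surgery on the largest admissible complement. Note first that if $F'\subsetneq F$ and $f\in F\setminus F'$, then $F'+W'\subseteq(F\setminus\{f\})+W'$. Hence it suffices to produce $W'$ with $F+W'=F+W$ such that for every $f\in F$ there is a point $t_f\in F+W'$ with $t_f-f\in W'$ but $t_f-f'\notin W'$ for all $f'\in F\setminus\{f\}$: any such $t_f$ then lies in no sumset $F'+W'$ with $f\notin F'$ (a representation $t_f=f'+w'$ with $f'\in F'$ would force $f'\ne f$ and $w'=t_f-f'\notin W'$), so $F'+W'\subsetneq F+W'$ for every proper $F'$.

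First I would normalize by translating so that $\min F=0$; set $d=\max F$ and write $F=\{0=f_1<\cdots<f_k=d\}$, $T:=F+W\supseteq\BN$. The one structural input needed is that the largest candidate complement $W^\ast:=\bigcap_{f\in F}(T-f)$ \emph{also} satisfies $F+W^\ast=T$: the inclusion $F+W^\ast\subseteq T$ is immediate from the definition, and conversely $W\subseteq W^\ast$ since for $w\in W$ and any $f\in F$ one has $f+w\in F+W=T$, whence $T=F+W\subseteq F+W^\ast$. Since $T\supseteq\BN$ and every $f\in F$ is $\ge 0$, this gives $\BN\subseteq W^\ast$, which is the density I will exploit.

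Next, choose witness sites $n_1<n_2<\cdots<n_k$ in $\BN$ with $n_1\ge 2d$ and all consecutive gaps $>4d$, and define
\[
W':=W^\ast\setminus\{\,n_i+f_i-f_j:\ 1\le i\le k,\ j\ne i\,\}.
\]
Put $t_i:=n_i+f_i\in\BN\subseteq W^\ast$. The site $n_i$ is not deleted (the $n_i$ are too far apart, and $f_i=f_j$ forces $j=i$), so $n_i\in W'$ and $t_i=f_i+n_i\in F+W'$, while $t_i-f_j=n_i+f_i-f_j$ is deleted for each $j\ne i$; thus the $t_i$ will be the private witnesses once we know $F+W'=T$. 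That equality is the crux. One inclusion is clear since $W'\subseteq W^\ast$. For $F+W'\supseteq T$, take $t\in T=F+W^\ast$, say $t=f+w$ with $w\in W^\ast$. If $t$ has distance $>2d$ from every $n_i$, then $w=t-f$ has distance $>d$ from every $n_i$, so $w$ is not a deleted point and $t\in F+W'$. Otherwise $t$ lies within $2d$ of a unique $n_i$, so $t\ge 0$, and I would split into three subcases: if $t$ is not a deleted point then $t\in\BN\subseteq W'$ and $t=f_1+t\in F+W'$; if $t=t_i$ then $t=f_i+n_i\in F+W'$; and if $t=n_i+f_i-f_j$ with $j\ne 1$ then, using $f_j\ge 1$ so that $f_j+d>d=\max F$ and hence $f_j+d\notin F$, the number $w'=t-d=n_i+f_i-f_j-d$ lies in $\BN\subseteq W^\ast$ and is \emph{not} deleted, so $t=f_k+w'\in F+W'$.

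I expect this final case analysis—checking that surgery near one witness block neither uncovers a nearby point of $T$ nor is disturbed by surgery near another block—to be the only real work; the reduction to private witnesses and the properties of $W^\ast$ are formal. The spacing $n_{i+1}-n_i>4d$ is what keeps the blocks independent, and the normalization $\min F=0$ is exactly what makes $f_j+d\notin F$ for $j\ne 1$, the single arithmetic fact the argument relies on.
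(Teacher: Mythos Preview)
The paper does not prove this lemma; it is quoted from Burcroff--Luntzlara and used as a black box, so there is no in-paper argument to compare against. Your proof, however, is correct and self-contained. The reduction to ``private witnesses'' is exactly the right reformulation, the passage to the maximal complement $W^\ast=\bigcap_{f\in F}(T-f)$ cleanly yields $\BN\subseteq W^\ast$, and the surgery with widely spaced sites $n_i$ works: the spacing $>4d$ guarantees each deleted point lies within $d$ of a unique $n_i$, so your three subcases in Case~2 are exhaustive (a deleted $t$ near $n_i$ is $n_i+f_i-f_j$ with $j\ne i$; the case $j=1$ collapses to $t=t_i$), and in subcase~(c) the key inequality $f_j+d>d=\max F$ indeed blocks $w'=t-d$ from being deleted.

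One cosmetic slip: in subcase~(a) you write ``$t\in\BN\subseteq W'$'', but $\BN\not\subseteq W'$ in general (you deleted points from $\BN$). What you mean, and what the argument needs, is $t\in\BN\subseteq W^\ast$ together with ``$t$ is not deleted'', hence $t\in W'$. With that corrected the proof is complete.
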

    
    Recall that the results of this paper are concerned with ``eventually periodic sets'', defined by
    \begin{definition}
        For $S\subseteq\BZ$, let $S_{/m}$ denote the image in $\BZ/m\BZ$ under projection.
        
        An \textit{eventually periodic set} of period $m$ is a set of integers of form
    \[(m\BN+A)\cup B\cup F\]
    where $A$ is nonempty, $B$ and $F$ are finite, $B_{/m}\subseteq A_{/m}$, and $F_{/m}\cap A_{/m}=\emptyset$.
    
    As noted by B-L, WLOG we may take $A$ to have at most one element in each congruence class mod $m$.
    \end{definition}
    In other words, these are sets which have infinite (in the positive direction) arithmetic progressions of period $m$ starting from all elements of $A$, and various finite ``exceptions'' $B$ and $F$, where elements of $B$ lie in the same congruence classes as elements of $A$ and elements of $F$ lie in different congruence classes than those of $A$. 
    
    Our motivating point is BL's following result:
    \begin{proposition*}[BL]
        For $C=2\BN\cup B\cup F$, where $B\subset 2\BZ_{-}$ and $F\subset 2\BZ+1$, $C$ arises as a MAC if and only if 
        \[2\BZ\sm(2\BN\cup B)=W+F\]
        for some $W\subseteq\BZ$.
    \end{proposition*}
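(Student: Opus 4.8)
The plan is to prove the two implications separately: the ``if'' direction by an explicit construction of a minimal complement, and the ``only if'' direction by a structural analysis of an arbitrary minimal complement. I expect essentially all the difficulty to be in the ``only if'' direction.

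For the ``if'' direction, suppose $D := 2\BZ\setminus(2\BN\cup B) = F + W_0$ for some $W_0$. Here $D$ is the set of negative even integers with the finitely many elements of $B$ deleted, so $D\subseteq 2\BZ_-$ is unbounded below; since $F$ is a finite set of odd integers, $W_0$ is a set of odd integers that is unbounded below. First I would replace $W_0$ by a set $W_0^{\ast}$ with $F + W_0^{\ast} = D$ for which no element of $F$ is superfluous, i.e.\ $(F\setminus\{f\}) + W_0^{\ast}\subsetneq D$ for every $f\in F$; this is exactly the content of the BL lemma applied after the reflection $x\mapsto -x$ (the BL lemma is stated for a sumset containing $\BN$, but its proof goes through whenever the sumset contains a one-sided arithmetic-progression tail, which $-D$ does). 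Then I would take $W := \{0\}\cup W_0^{\ast}$. Here $C+W=\BZ$ is immediate: the even integers are covered since $(2\BN\cup B)+\{0\}=2\BN\cup B$ and $F+W_0^{\ast}=D$, while the odd integers are covered since $2\BN + W_0^{\ast} = 2\BZ+1$ (a set of odd integers unbounded below, translated by the nonnegative even integers, fills up all odd integers). For minimality the key observation is that $F+W_0^{\ast}=D\subseteq 2\BZ_-$ is disjoint from $2\BN\cup B$: hence every $c\in 2\BN\cup B$ has $c=c+0$ as its only representation in $C+W$ (a representation using $W_0^{\ast}$ on the $(2\BN\cup B)$-side would be odd, one using $W_0^{\ast}$ on the $F$-side would force $c\in D$, and $c=f+0$ is impossible by parity), so $c$ is necessary; and each $f\in F$ is necessary because non-superfluity of $W_0^{\ast}$ provides $d\in D$ with $d\in f+W_0^{\ast}$ but $d\notin (F\setminus\{f\})+W_0^{\ast}$, and such a $d$ — a negative even integer not in $B$ — is representable in $C+W$ only using $f$, so deleting $f$ destroys the covering of $d$.

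For the ``only if'' direction I would start from a minimal $W$ with $C+W=\BZ$, translate so that $0\in W$, and write $W_e, W_o$ for the even and odd parts of $W$. If $W_e$ is bounded below, a deep negative element $d\in D$ cannot be written $e+w$ with $e\in 2\BN\cup B$ and $w\in W_e$ (the required $w$ would lie below $\min W_e$), so every sufficiently negative element of $D$ lies in $F+W_o$; thus $D$ agrees with a subset of $F+W_o$ off a finite set. To upgrade this to ``$D$ is coverable by $F$'' I would invoke minimality of the elements of $F$: each $f\in F$ has a private witness, which one argues can be taken to be a deep negative even integer in $(f+W_o)\setminus\bigcup_{f'\neq f}(f'+W_o)$, so that $W_o$ already covers a cofinite subset of $D$ non-superfluously by $F$; a second application of the (reflected) BL lemma then yields a $W_0$ with $F+W_0=D$. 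The case $W_e$ unbounded below would have to be handled separately: there $2\BN+W_e=2\BZ$, and minimality of the elements of $2\BN$ forces $W_o$ to meet the odd residue class in a single point, after which minimality of the elements of $F$ and $B$ should show that this configuration either cannot occur or again produces the required $W_0$.

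The hard part will be precisely this ``only if'' direction, and within it two points: (i) passing from cofinite coverability to exact coverability of $D$ by $F$ — this genuinely needs the minimality hypothesis, since cofinite coverability does not imply exact coverability (for instance $\{0,1\}$ covers a cofinite subset of $2\BN$ but not $2\BN$ itself), so the finitely many ``boundary'' elements of $D$ must be controlled by a careful look at how $W$ covers them; and (ii) disposing of the case where the even part of $W$ is unbounded below. The ``if'' direction, by contrast, should be routine once the appropriate (reflected, arithmetic-progression-tail) form of the BL lemma is in hand.
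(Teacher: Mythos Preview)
The paper does not itself prove this proposition; it is quoted from [BL] as a motivating result. The paper does, however, reprove the ``if'' direction in the course of showing that Theorem~4 implies Proposition~3 (see the paragraph there handling the case $m=2$, $\wt f=1$), and the construction used --- take $V=W\cup\{0\}$ where $F+W=2\BZ_-\sm B$ with $F$ made minimal via the BL lemma --- is exactly your $W=\{0\}\cup W_0^\ast$. So your ``if'' direction is correct and matches what the present paper does.

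For the ``only if'' direction there is nothing in this paper to compare against, and your sketch has a genuine gap which you flag but do not close. In the case $W_e$ bounded below you correctly deduce that all sufficiently negative elements of $D$ lie in $F+W_o$, but your proposed passage to $F+W_0=D$ via ``a second application of the (reflected) BL lemma'' is a non-sequitur: the BL lemma takes an existing identity $F+W=S$ and refines $W$ so that $F$ becomes minimal \emph{with the same} $S$; it cannot enlarge a partial cover $F+W_o\supseteq D\sm(\text{finite})$ into a full cover of $D$, nor does it trim $F+W_o$ down to lie inside $D$ (and nothing in your argument prevents $F+W_o$ from meeting $2\BN\cup B$). The finitely many ``shallow'' elements of $D$ are exactly where the minimality of $C$ must do real work, and your sketch does not say how. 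Your treatment of the case $W_e$ unbounded below is likewise only an outline: your observation that minimality of the elements of $2\BN$ then forces $W_o$ to be a singleton is correct and is the right first move, but the promised conclusion (``either cannot occur or again produces the required $W_0$'') is not argued.
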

    

\subsection{The Setting}
    In this section we will describe the ``setup'' we will be operating under, in the hopes of providing a clearer and more visual picture of the problem at hand. 
    
    In dealing with eventually periodic sets of period $m$, we shall think of them as follows: take the infinite strip in the lattice $\BZ^2$ given by
    \[Z_m=\{(x,y)\in\BZ^2:0\le x\le m-1\}\]
    and consider it as a copy of $\BZ$ by taking
    \[n=x+my.\]
    (For the sake of visual appeal, rather than place these ``dots'' $(x,y)$ on the grid lines, we shall shift the grid lines to the west and to the south by $0.5$, so that the dots lie in the middle of ``boxes''; that is, rather than placing the dots ``Go-style'', we will be placing them ``English chess-style''.) For example, the set $4\BN\cup \{-8,-12\}\cup\{3,6\}$ would look like:
    \begin{center}
       \includegraphics[width=100pt]{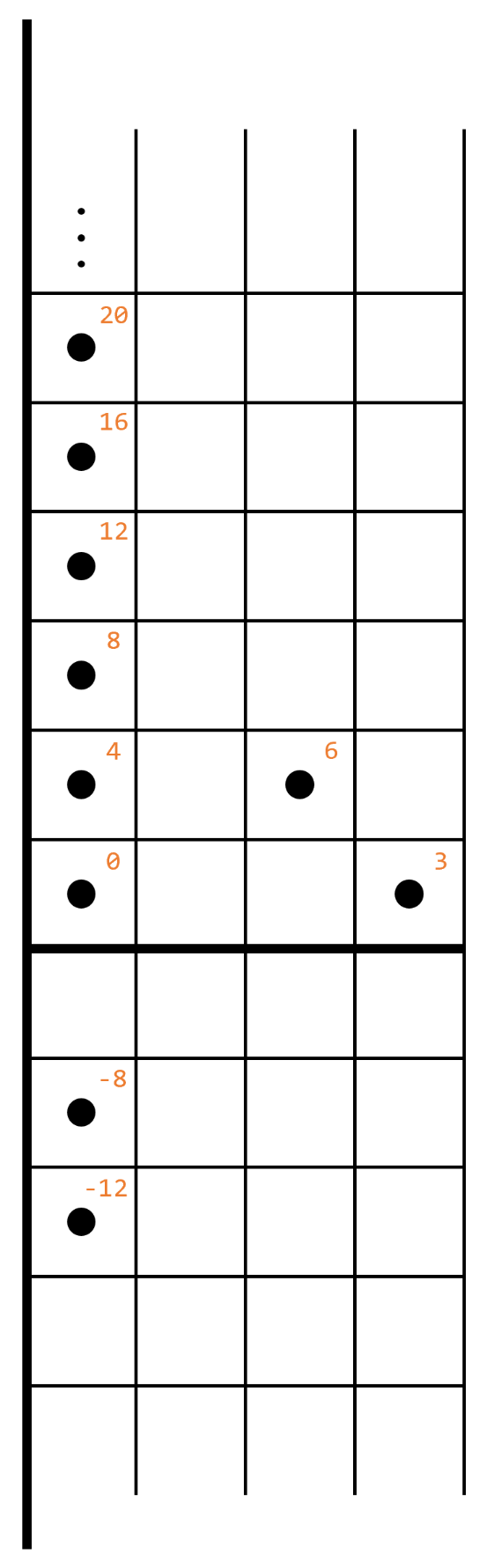}
    \end{center}
    
    In order to preserve structure, perhaps it is better to think of this as
    \[Z_m=\quot{\BZ^2}{\pr*{(x,y)\sim (x',y')\te{ if } x\equiv x'\ \rmod{m},\ y'+\frac{x'-x}{m}=y}},\]
    or more concisely 
    \[Z_m\overset{\sf{Mod}_\BZ}{\cong}\mnfrac{\BZ^2}{\BZ\wangle{(m,-1)}}\cong \BZ\]
    where the congruence is that of abelian groups (hence the symbol $\overset{\sf{Mod}_\BZ}{\cong}$). In fact we can take this to be definition:
    \begin{definition}
        Let $Z_m$ be
        \[Z_m\coloneqq \mnfrac{\BZ^2}{\BZ\wangle{(m,-1)}}\cong\BZ.\]
        We will denote the projection map by
        \[\pi_m\colon \BZ^2\lsurj Z_m.\]
        
        Under the identification (which we call the \textit{strip construction})
        \[Z_m=\{(x,y)\in\BZ^2:0\le x\le m-1\},\]
        for $i\in[0,m-1]$ we will call each $\{(x,y)\in Z_m:x=i\}$ the \textit{$i$-th column}, denoted $\Col(i)$ (or $\Col_m(i)$ when there could be confusion with the following definition). We will write $\Col^+(i)$ to denote the subset of $\Col(i)$ with $y\ge 0$, i.e. nonnegative $y$-coordinate. Similarly $\Col^-(i)$ refers to $y<0$.
        
        Similarly we may refer to $\{(x,y)\in \BZ^2:x=i\}$ as the \textit{$i$-th column} of $\BZ^2$, denoted $\Col(i)$. We will write $\Col^+(i)$ to denote the subset of $\Col(i)$ with $y\ge 0$, i.e. nonnegative $y$-coordinate. Similarly $\Col^-(i)$ refers to $y<0$. For a subset $S$ of $\BZ^2$, we also denote the set of columns in which $S$ has elements by $\Col(S)$, with $\Col^+(S)$ defined by $\Col^+(S)\coloneqq \bigcup_{s\in S}\Col^+(s)$ and similarly for $\Col^-(S)$.
        
        \textit{Abuse of notation}: for any subset $S$ of $\BZ$, we will use the same symbol $S$ to denote its isomorphic image in $Z_m\cong \BZ$.
    \end{definition}
    A more pictorial/``topological'' way to think of this is to take $\BZ^2$ and wrap it around horizontally to create a cylinder in a slanted manner, such that each $(mk,y)$ gets glued to $(m(k-1),y+1)$. 
    
    In this setup, the question of whether or not $C$ is an additive complement can be rephrased as whether or not there exists a set of ``translations'' (more precisely this is translations inside $Z_m=\BZ^2/\BZ\wangle{(m,-1)}$) by $W$ such that the union of all such translations of $C$ covers all of $Z_m$. For example, $W=\{1\}$ is a simple shift to the right by one unit. Whether or not $C$ is a \textit{minimal} additive complement can be rephrased as whether or not such a $W$ exists such that every element $c\in C$ has a \textit{dependent element} in the integers $n\in \BZ$, which are defined as follows:
    \begin{definition}
        For a fixed additive complement $W$ of $C$, an element $c\in C$ is said to \textit{have a dependent element} if there is some $\Delta(c)\in\BZ$ such that $\Delta(c)\not\in C\sm\{c\}+W$, i.e. if $c$ is removed then $\Delta(c)$ fails to be covered.
        
        An element of $C$ is said to be a \textit{guardian} if it has a dependent element.
    \end{definition}
    \vspace{-1em}
    \begin{observation*}
    As noted by B-L, it is clear that minimality is equivalent to every element of $C$ having a dependent element, i.e. in the union of all translations, $\bigcup_{w\in W}(C+\{w\})=C+W$, every element $c\in C$ has a translate which is covered exactly once.
    \end{observation*}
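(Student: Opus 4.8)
The plan is to prove the Observation by directly unwinding the definitions of \emph{minimal additive complement} and \emph{dependent element}, together with one small preliminary reduction. Throughout, fix an additive complement $W$ of $C$, so that $C+W=G$.

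The first step is to reduce minimality to single-element deletions: I claim $C$ fails to be a MAC to $W$ exactly when $C\sm\{c\}$ is still an additive complement to $W$ for some individual $c\in C$. Indeed, if a proper subset $C'\subsetneq C$ has $C'+W=G$, then picking any $c\in C\sm C'$ gives $C'\subseteq C\sm\{c\}$, whence $(C\sm\{c\})+W\supseteq C'+W=G$; conversely $C\sm\{c\}$ is itself a proper subset of $C$. Hence $C$ is a MAC to $W$ if and only if $(C\sm\{c\})+W\neq G$ for every $c\in C$.

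The second step matches this with the dependent-element condition. By definition, $(C\sm\{c\})+W\neq G$ means precisely that some $\Delta(c)\in G$ satisfies $\Delta(c)\notin(C\sm\{c\})+W$, i.e.\ that $c$ has a dependent element. Combined with the first step, this already gives the stated equivalence: $C$ is a MAC to $W$ iff every $c\in C$ is a guardian.

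It remains to see that the ``i.e.'' clause says the same thing. Given a dependent element $\Delta(c)\notin(C\sm\{c\})+W$, the fact that $W$ is a complement forces $\Delta(c)\in C+W$, so $\Delta(c)=c+w$ for some $w\in W$; this $w$ is unique (since $c+w=c+w'$ forces $w=w'$ in a group), and no pair $(c',w')$ with $c'\neq c$ satisfies $c'+w'=\Delta(c)$, so the translate $c+w$ is covered exactly once in $\bigcup_{w\in W}(C+w)=C+W$. Conversely, if some translate $c+w$ is covered exactly once, then $c+w\notin(C\sm\{c\})+W$, so $c+w$ is a dependent element of $c$. I do not anticipate any real difficulty: the entire assertion is a definitional unwinding, and the only two points worth stating carefully are the reduction from arbitrary proper subsets to single-element deletions and the bookkeeping remark that a point of $C+W$ missing from $(C\sm\{c\})+W$ is automatically of the form $c+w$ and is attained exactly once — which is precisely the bridge between the ``dependent element'' formulation and the ``covered exactly once'' formulation.
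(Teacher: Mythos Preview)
Your proposal is correct; the argument is a clean unwinding of the definitions, and the reduction to single-element deletions together with the bookkeeping that a dependent element must equal $c+w$ for a unique $w$ is exactly what is needed. The paper itself offers no proof of this observation (it simply records it as ``clear'', citing B--L), so there is no alternative approach to compare against---your write-up just spells out what the paper leaves implicit.
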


\section{Results and Discussions Thereof}

B-L described when sets of form $2\BN\cup B\cup F$ arise as MACs. In attempting to generalize this to general $m$, we can restrict our attention to either $F$ a singleton or $B$ an empty set. In the former case, 
    \begin{proposition}
        For $m\ge 2$, any eventually periodic set
        \[C=m\BN\cup B\cup\{f\}\]
        arises as a MAC. This holds even if $B$ is infinite, unless both $m=2$ and $m\BN\cup B=m\BZ$ are true, in which case it is $m\BZ\cup\{f\}$ does not arise as a MAC. 
    \end{proposition}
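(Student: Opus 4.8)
The plan is to settle the one non-example by hand and, for every other $C$, to exhibit explicitly a $W$ making $C$ a MAC, reading the construction off the strip picture of $\Col_m$: a translate $C+w$ consists of a translate of the half-column $\Col^+(0)=m\BN$, the finitely or infinitely many extra dots $B$ lower in column $0$, and the single dot $f$ in column $r:=\on{mod}_m f\neq 0$.

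I would first dispose of $m=2$, $C=2\BZ\cup\{f\}$ with $f$ odd. Suppose $C+W=\BZ$. If $W$ meets both residues mod $2$, then already $2\BZ+W=\BZ$, so $C\sm\{f\}$ is a complement of $W$ and $C$ is not minimal. If $W$ lies in one residue mod $2$, then $2\BZ+W$ is a single residue class, so $f+W$ must be the other one in full, forcing $W$ to contain a whole residue class mod $2$; then $W$ has two elements in a common residue, some coset $2\BZ+w=2\BZ+w'$ is covered twice, and no even element of $C$ can have a dependent element. Either way $C$ is not a MAC, which proves the ``$m=2$, $m\BN\cup B=m\BZ$'' assertion.

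For the positive direction set $G:=m\BZ\sm C=m\BZ_{<0}\sm B$, the ``gaps'' in column $0$. The main case is $G$ infinite --- which in particular covers every $C$ with $B$ finite. Here I would take
\[W\ :=\ \{0\}\ \cup\ (G-f)\ \cup\ \bigcup_{j}\ \bigl(w_j+m\BZ_{\le 0}\bigr),\]
the union over residue classes $j\bmod m$ with $j\not\equiv 0,-r$, and $w_j\equiv j\pmod m$ arbitrary. Then $0$ covers $m\BN$ and $B$ once each in column $0$; since $G$ is unbounded below, the translates of $m\BN$ by $G-f\subseteq\Col(-r)$ sweep out all of column $-r$; each $w_j+m\BZ_{\le 0}$ sweeps out column $j$; and $f+(G-f)=G$ completes column $0$, with $f$ the only element of $C$ whose translates meet $G$ (as $W\cap m\BZ=\{0\}$). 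The one thing to check is that translating $m\BN$ into a column $\not\equiv 0$ never sends $f$ back into column $0$; this happens only for the column $\equiv -r$, and that column is deliberately reserved for the $(G-f)$-translates. Hence $C+W=\BZ$; deleting $f$ uncovers $G$, so $f$ is a guardian; and each dot of $m\BN\cup B$ is covered exactly once, by its $0$-translate, so is a guardian. This works for all $m\ge 2$ and all $r\neq 0$, with no hypothesis on $\gcd(f,m)$.

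It remains to treat $G$ finite. If $G=\varnothing$, i.e. $C=m\BZ\cup\{f\}$: outside the non-example we have $m\ge 3$, so there is a residue $j\notin\{0,r\}$; I would cover column $j$ using only translates of $f$ (this puts a full residue class into $W$, but only over-covers a $C$-free column), cover the other columns by translates of $m\BZ$, and perturb the $w_\bullet$'s so that the dots of $m\BZ$ stay singly covered --- then $f$ is a guardian, since deleting it uncovers column $j$. If $G$ is finite and nonempty (so $B$ is infinite, co-finite in $m\BZ_{<0}$): the $(G-f)$-translates of $m\BN$ cover column $-r$ only from some height up, the $(G-f)$-translates of $B$ cover it from some height down, leaving a bounded middle gap; this is patched by translates of $f$ by elements $\equiv -2r\pmod m$, which is harmless unless the patch re-enters column $0$, i.e. unless $m$ is even with $r=m/2$ --- but then $\gcd(f,m)=m/2$, and dividing through by it reduces to $m=2$. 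So the genuine obstacle, and the step I expect to be hardest, is $m=2$ with $\varnothing\neq G\subseteq 2\BZ_{<0}$ finite, that is $C=(2\BZ\sm G)\cup\{f\}$ with $f$ odd: column $0$ is now two-sidedly infinite, so a $W$ of the shape above over-covers it, and instead one needs a small (in fact finite) $W$ not containing $0$, of the flavour that for $C=(2\BZ\sm\{-2\})\cup\{f\}$ one can take $W=\{w_0,w_1,w_2\}$ with $w_0=f+w_1+2$ even, $w_1\neq w_2$ odd, $w_2\neq w_1+2f+4$, each dot of $C$ then receiving its dependent element from a residue-shifted translate. Verifying simultaneously that $C+W=\BZ$, that $f$ is essential, and that each of the infinitely many dots of $C\cap m\BZ$ gets a uniquely covered translate is exactly the bookkeeping that the paper's formal power series are meant to automate; in the write-up I would also spell out the $\gcd$-reduction --- translates by the cosets $i+d\,W'$ lie in the disjoint cosets $d\BZ+i$, so they cannot spoil one another's dependent elements --- and then use it freely.
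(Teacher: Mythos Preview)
Your construction for the main case --- $G=m\BZ\sm(m\BN\cup B)$ infinite, which in particular subsumes all finite $B$ --- is correct, and it is more direct than the paper's route. The paper deduces the finite-$B$ case from Proposition~3, which in turn is extracted from the full machinery of Theorem~4; your one-line $W=\{0\}\cup(G-f)\cup\bigcup_j(w_j+m\BZ_{\le 0})$ does the job without any of that. Your treatment of the non-example $m=2$, $C=2\BZ\cup\{f\}$ is also fine.

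The genuine gap is the case $m=2$ with $G$ finite and nonempty, which you explicitly flag as ``the step I expect to be hardest'' and then do not finish: you exhibit a $W$ only for $G=\{-2\}$ and leave the general finite $G$ to ``bookkeeping''. This is not merely bookkeeping --- with $W=\{0\}\cup(G-f)$ alone, column~$1$ is covered by $\bigcup_{g\in G}\bigl((2\BZ+1)\sm(G+g-f)\bigr)\cup\{f\}$, and for general $G$ the intersection $\bigcap_{g\in G}(G+g-f)$ need not be contained in $\{f\}$, so holes remain and must be patched in a way that does not destroy the single-coverage of column~$0$. Your gcd-reduction for $r=m/2$ is correct as far as it goes, but it lands you precisely in this unresolved $m=2$ case, so the argument is circular. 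The clean fix is to invoke the Burcroff--Luntzlara proposition quoted in the paper's preliminaries: for $m=2$, the set $2\BN\cup B\cup\{f\}$ is a MAC if and only if $2\BZ\sm(2\BN\cup B)=\{f\}+W$ for some $W$, and this $W$ exists trivially (take $W=G-f$) whenever $G\neq\emptyset$. Incidentally, the paper's own Appendix argument for infinite $B$ with $G\neq\emptyset$ asserts that its $V$ covers $\BZ$ without checking the case $r=m/2$, $G$ finite --- and indeed for $m=4$, $f=2$, $G=\{-4\}$ the element $-10$ is missed --- so you have in fact located a subtlety the paper glosses over; you just need to close it. Your $G=\emptyset$, $m\ge 3$ sketch (``perturb the $w_\bullet$'s'') is also too loose as written: the over-covered column is $j-r$, not obviously ``$C$-free'', and for $m=3$ there is no $j\notin\{0,r,2r\}$, so you should either write out the perturbation explicitly or adopt the paper's choice $V=\{i(1+m):0\le i\le m-2\}\cup(m\BZ+m-1-r)$ and verify it.
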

In the latter case, 
    \begin{proposition}
        For $|F_{/m}|=1$, any
        \[C=m\BN\cup F\]
        arises as a MAC. 
    \end{proposition}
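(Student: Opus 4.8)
The plan is to build an explicit complement $W$ inside the strip model $Z_m\cong\BZ^2/\BZ\wangle{(m,-1)}$, arranged so that exactly one column of $Z_m$ stays ``clean'': it is covered above a fixed height by a single translate of the ray $m\BN$, and below that height entirely by translates of the finite column $F$, while every other column is covered by auxiliary translates that are steered so that both their $m\BN$-parts and their $F$-parts avoid the clean column. Write $F=\{r+ma_1<\dots<r+ma_k\}$ with $1\le r\le m-1$, and designate the residue $c:=m-1$; taking the \emph{largest} residue is the one genuinely delicate point (see below). Let $W$ be the union of three families: the single element $w^\ast:=c$, whose ray $m\BN+w^\ast$ fills $\Col(c)$ at heights $\ge 0$; the family $W_0:=\{(c-r)+mt:t\le -1-a_k\}$; and, for each residue $i\notin\{c,\ c-r\}$, the family $\{i+mt:t\le 0\}$. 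Since $r\not\equiv 0$, the residues $c$, $c-r$, and the rest are distinct, so these three families' rays lie in disjoint columns of $Z_m$; a short computation shows the $F$-translates of $W_0$ fill $\Col(c)$ at heights $\le -1$ (here one uses $c-r\ge 0$, so that no wraparound bumps a translated $F$-point up to height $0$), and that the $F$-translates of the other two families never land in $\Col(c)$ at all. Together these yield $C+W=\BZ$.

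For minimality, each element $mj$ of $m\BN$ is a guardian via $w^\ast$: its translate $mj+w^\ast$ lies in $\Col(c)$ at height $j\ge 0$, which is met only by the ray $m\BN+w^\ast$, since all other rays sit in other columns and every $F$-translate reaching $\Col(c)$ was kept at negative height. For the elements of $F$, apply the BL lemma — after reflecting to turn the downward half-line $F+W_0$ into an upward one — to the pair $(F,W_0)$: this trims $W_0$ to a subset $W_0'$ with $F+W_0'=F+W_0$ for which $F$ is minimal, so each $f_\ell$ has a dependent element $\Delta_\ell$ hit by $F+W_0'$ exactly once. Replacing $W_0$ by $W_0'$ still covers $\Col(c-r)$ (the trimmed family remains unbounded below, so its rays fill that column), and $\Delta_\ell\in\Col(c)$ at negative height can be covered in $C+W$ only through $F+W_0'$ — not by any $m\BN$-ray (those are at nonnegative heights in $\Col(c)$) nor by any other $F$-translate (none reach $\Col(c)$) — hence $\Delta_\ell$ is covered exactly once and $f_\ell$ is a guardian. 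This is where the nonemptiness of $F$ is essential.

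The main obstacle is not a single hard step but the simultaneous bookkeeping: one must choose the clean residue and split the remaining residues so that $\Col(c)$ receives exactly one ray (from some height up), the $F$-translates filling the rest of $\Col(c)$ do not spill above that height — precisely the place where the wraparound built into $Z_m=\BZ^2/\BZ\wangle{(m,-1)}$ must be controlled, which is what forces $c=m-1$ — and no auxiliary translate's $F$-part strays into $\Col(c)$. Verifying all of these at once is exactly the routine-but-delicate accounting that the formal power series formalism is designed to make mechanical, and I would present the verification in that language. The construction needs no lower bound on $m$; in particular it also covers $m=2$, in agreement with the Burcroff--Luntzlara result quoted above.
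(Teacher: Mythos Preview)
Your proof is correct. The construction does what you claim: the single translate $w^\ast=m-1$ lays down the unique ray in $\Col(c)$ at nonnegative heights, $W_0$ (after applying the BL lemma) fills the negative heights of $\Col(c)$ via $F$-translates while its own rays sit in $\Col(c-r)$, and the remaining residues are handled by the third family, whose $F$-translates indeed miss $\Col(c)$ precisely because $i=c-r$ is excluded. The check that $W_0'$ returned by the BL lemma still lies in $\Col(c-r)$ and is unbounded below (so that $m\BN+W_0'$ still covers that column) is implicit in $F+W_0'=F+W_0$, as you note; this is the one place I would make the argument more explicit, since the BL lemma does not a priori return a subset of $W_0$.

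Your route differs from the paper's: the paper deduces Proposition~2 from Proposition~3, which in turn is obtained from Theorem~4 with the trivial one-element cover $\{m\BN\}$. That chain requires the full Theorem~4 machinery and, because the Theorem~4 bound degenerates to $m\ge 3$ when $\wt f=1$, forces a separate ad hoc argument for $m=2$. Your direct construction, by anchoring the clean column at the largest residue $c=m-1$, handles all $m\ge 2$ uniformly and avoids the general apparatus entirely. The trade-off is that the paper's route exhibits Proposition~2 as the $B=\emptyset$ specialisation of a broader statement, while yours is self-contained but does not immediately suggest the generalisations. The underlying mechanism---one distinguished column receiving exactly one ray plus $F$-translates confined below it---is the same in both.
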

    Note that the hypothesis $|F_{/m}|=1$ in particular implies $F\neq\emptyset$ is nonempty.
    
    The above two propositions can both be seen as specific instances of the following:
    \begin{proposition}
        For $|F_{/m}|=1$, 
        \[\existss W\subseteq\BZ :m\BZ_{-}\sm B=F+W\implies m\BN\cup B\cup F\te{ is a MAC}.\]
        In words, the existence of a subset $W\subseteq\BZ$ such that $m\BZ_-\sm B=F+W$ implies that $m\BN\cup B\cup F$ is a MAC.
    \end{proposition}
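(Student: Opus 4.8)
The idea is to build an explicit witness $W$ making $C = m\BN\cup B\cup F$ a minimal additive complement, starting from the hypothesized $W_0\subseteq\BZ$ with $m\BZ_-\sm B = F + W_0$. First I would observe that $C$ is an additive complement of some set: indeed, $m\BN + \BZ = \BZ$ already (since any integer differs from a nonnegative multiple of $m$ by a bounded amount, wait — more carefully, $m\BN + \{0,1,\dots,m-1\} + (-N)$ for large $N$ covers any fixed range, so $m\BN$ together with a suitable arithmetic-progression-like $W$ covers all of $\BZ$). The cleaner route: use the strip picture. In $Z_m\cong\BZ$, the set $m\BN$ is a union of "tails" of columns going upward; together with translations by $W$ we want to cover the whole strip. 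The hypothesis hands us exactly the data needed to cover the "negative part" $m\BZ_-\sm B$ using $F$, while $m\BN\cup B$ covers the rest of the $0$-column residue class $m\BZ$, and then we separately cover the other residue classes.

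**Key steps.** (1) Set up $W$ in two pieces: a piece $W_1$ built from $W_0$ that, combined with $F$, recovers $m\BZ_-\sm B$, hence makes $(m\BN\cup B) + \{0\}$ together with $F + W_1$ cover all of $m\BZ$; and a piece $W_2$ consisting of translations (like $\{0,1,\dots,m-1\}$ shifted appropriately, or translates of a single progression) so that $m\BN + W_2$ sweeps out the residue classes $\not\equiv 0 \pmod m$, using that $F_{/m}$ misses $A_{/m} = \{0\}$ only in a controlled way. One must be slightly careful: $F$ may hit several nonzero residue classes, and translating $C$ by $w$ moves $m\BN$ into residue class $w \bmod m$ and moves $F$ around too; so I would choose $W_2$ to be, say, $\{1,2,\dots,m-1\} + m\BN$ or better $\{-j : 0\le j\le m-1\}$ together with enough negative shifts to reach down, verifying that the union covers $\BZ\sm m\BZ$ and contributes nothing problematic to $m\BZ$. (2) Check $C + W = \BZ$ by combining the two coverage claims. (3) For minimality: I would invoke the BL lemma. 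The BL lemma says that given a finite $F$ and $W$ with $F + W \supseteq \BN$, we can shrink $W$ to $W'$ with $F + W' = F + W$ and no proper subset of $F$ working. Here I would arrange that the "$F$-part" of the construction covers (a translate of) $\BN$ — which is automatic since $F + W_0$ is a congruence class intersected with $\BZ_-$, close enough after reflection — apply BL to make every element of $F$ a guardian, and then handle $m\BN\cup B$: each such element needs a dependent element, which I would produce by noting that the covering of its column-tail by the single shift $\{0\}$ is, by construction, the \emph{only} thing covering sufficiently-high points in residue class $0$, so removing any $m\BN\cup B$ element leaves a gap. This last part is where one exploits that $m$ is not constrained and $F$ lives in other residue classes, so the $F+W$ translations never "accidentally" re-cover the $0$-class.

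**Main obstacle.** The hard part will be ensuring simultaneous minimality: making the $F$-translations cover $m\BZ_-\sm B$ \emph{without} over-covering points of $m\BN\cup B$ in a way that destroys their dependent elements, and symmetrically making the residue-class-sweeping translations $W_2$ not interfere with the delicate $F$-structure. Concretely, a translate $F + w$ for $w$ in the sweeping part could land a stray point of $F$ into residue class $0$ at a high position, spoiling a dependent element of some $m\BN$ point. I expect the fix is to push all the "auxiliary" shifts far enough in the negative direction (possible since $B$ is finite and $F$ is finite) that their images of $m\BN\cup B$ and of $F$ are confined to a bounded window, beyond which the $0$-class is covered solely by $m\BN$ under $\{0\}$ — giving every $m\BN$ element with large enough coordinate a dependent element, and then the BL lemma cleans up the finitely many remaining elements of $B$ and all of $F$ at once. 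I would also need the small lemma that one may enlarge $W_0$ to a set $W_0'$ with $F + W_0' = F+W_0$ that is "downward cofinal enough" for the BL lemma's hypothesis $F + W' \supseteq \BN$ (after a reflection $n\mapsto -n$) to apply — this is routine given $|F_{/m}| = 1$.
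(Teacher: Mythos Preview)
Your overall architecture is correct and is in fact more direct than the paper's route. The paper derives Proposition~3 as the $n=1$ case of Theorem~4 (the set cover consists of the single set $m\BN\cup B$, making the bound $m\ge\wt f+1$ automatic when $\wt f>1$), and then handles the residual case $m=2,\ \wt f=1$ by an explicit $W=\{0\}\cup W_0$. You instead propose to write down a single explicit witness for all $m$ at once, which works and avoids the Theorem~4 machinery entirely. Interestingly, the paper does use exactly your construction---but elsewhere, in the Appendix proof of Proposition~1 for infinite $B$, where it takes $W=\{0\}\cup W_0\cup\bigcup_{i\ne 0,\,m-\wt f}(m\BZ+i)$.

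Two concrete corrections to your sketch. First, your ``push the auxiliary shifts far negative'' fix does not resolve the column-$0$ collision: if $W_2$ contains any element $w\equiv -\wt f\pmod m$, then $F+w$ lands in column~$0$ regardless of how negative $w$ is, and these hits can overwrite the dependent elements of $F$ that the BL lemma produced inside $m\BZ_-\sm B$. The clean fix is simply to \emph{exclude} the residue class $m-\wt f$ from $W_2$; that class is already covered because $W_0$ lies in class $m-\wt f$ and is unbounded below (forced by $F$ finite and $F+W_0=m\BZ_-\sm B$ unbounded below), so $m\BN+W_0$ fills the entire $(m-\wt f)$-column. With $W_2=\bigcup_{i\in\{1,\dots,m-1\}\setminus\{m-\wt f\}}(m\BZ+i)$, no translate of $F$ or of $m\BN\cup B$ through $W_2$ ever touches column~$0$, so column~$0$ is covered \emph{exactly} by the disjoint union $(m\BN\cup B)\sqcup(m\BZ_-\sm B)$.

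Second, the BL lemma does not ``clean up the finitely many remaining elements of $B$''---it is a statement about the finite set $F$ only. But you do not need it for $B$: once column~$0$ is covered precisely once as above, every element of $m\BN\cup B$ is its own dependent element under the translate by $0$. BL is applied solely to replace $W_0$ by $W_0'$ with $F+W_0'=m\BZ_-\sm B$ and $F$ minimal, which is legitimate since $m\BZ_-\sm B$ contains a monoid copy of $\BN$.
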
  
    \begin{proof}[Proposition 3 implies Proposition 1] Let $B$ be finite. Indeed, in Proposition 1 $F=\{f\}$ is a singleton, which can cover any subset of the integers, and therefore in particular any $B$ has $m\BZ_-\sm B=\{f\}+W$, which by Proposition 3 implies $C=m\BN\cup B\cup\{f\}$ is a MAC. 
    
    For the case where $B$ is infinite, see the Appendix.
    \end{proof}
    
    \begin{proof}[Proposition 3 implies Proposition 2] Similarly in Proposition 2 we have $B=\emptyset$, so that $m\BZ_-\sm B=m\BZ_-$, which is coverable by any $F$; indeed, just take $W=\{-km-f_1:k\in\BZ_+\}$, where $f_1$ is the maximal element of $F$. 
    (Indeed, it is easy to see that, generally, $\BN$ is coverable by any finite set $F\subseteq\BZ$; then the above is a specific instance, since $m\BZ_-$ is isomorphic to $\BN$ as monoids.) Hence an empty $B$ satisfies the conditions of Proposition 3, which gives Proposition 2. \end{proof}
    
    In fact, Proposition 3 is also a specific case of a more general statement:
    \begin{theorem}
        Let $|B_{/m}|=|F_{/m}|=|A|=1$ with $F_{/m}=\{\wt f\}$, where without loss of generality let $A=\{0\}$. Then the existence of a set cover $\{S_i\}$,
        \[S_1\cup\cdots\cup S_n=m\BN\cup B,\]
        such that each member $S_i$ has
        \[m\BZ\sm S_i=F+W_i\]
        for some $W_i\subseteq\BZ$, implies that if 
        \[m\ge \wt f+2\wt f\floor*{\frac{n}{\wt f}}+\on{mod}_{\wt f}n,\]
        then \[C=m\BN\cup B\cup F\] arises as a MAC.
    \end{theorem}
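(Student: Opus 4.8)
The plan is to build an explicit additive complement $W$ to $C = m\BN \cup B \cup F$ by combining the data of the set cover $\{S_i\}$ with translations that individually handle each congruence class modulo $m$, and then to verify minimality by exhibiting a dependent element for each element of $C$. I would first work inside the strip picture $Z_m \cong \BZ^2/\BZ\langle(m,-1)\rangle$: since $|A| = 1$ and $A = \{0\}$, the set $m\BN \cup B$ sits entirely in $\Col(0)$, while $F$ sits in columns indexed by the single residue $\wt f$ (possibly several points in that one column). The key idea is that each piece $S_i$ of the cover, together with the hypothesis $m\BZ \setminus S_i = F + W_i$, tells us that the translates $F + W_i$ cover the complement of $S_i$ within $\Col(0)$; dually, translating $C$ by an appropriate shift makes the ``$m\BN$ part'' of $C$ land on $S_i \subseteq \Col(0)$ and the ``$F$ part'' of $C$ sweep out everything needed elsewhere. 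Summing over $i = 1,\dots,n$ gives a candidate $W$, but one must be careful that the $n$ copies do not destroy minimality by over-covering; this is exactly where the numerical bound on $m$ enters.

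The second step is the bookkeeping that produces the bound $m \ge \wt f + 2\wt f\lfloor n/\wt f\rfloor + \on{mod}_{\wt f} n$. I would organize the $n$ translates into $\lfloor n/\wt f\rfloor$ groups of size $\wt f$ (plus a remainder group of size $\on{mod}_{\wt f} n$), arranged so that within each group the $\wt f$ shifts are spaced out across distinct residues modulo $m$ in a way that lets the ``$F$-sweep'' of one copy provide the dependent elements (guardians) for the ``$m\BN$-part'' of another. Each such group consumes a block of roughly $2\wt f$ columns of ``maneuvering room,'' plus an initial block of $\wt f$ columns to seed the construction and the remainder contributes $\on{mod}_{\wt f} n$ more columns; the requirement that all these blocks fit disjointly into the $m$ available columns is precisely the stated inequality. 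The formal-power-series formalism advertised in the introduction is, I expect, the intended tool to make this packing argument into a routine verification: one encodes each $\Col(i)$ as a power series, encodes the translates as multiplications by monomials, and checks that the relevant coefficients are all $\ge 1$ (covering) and that a prescribed coefficient is exactly $1$ (the dependent element) for each $c \in C$.

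The third step is minimality proper: for each $c \in C$ I would name its dependent element $\Delta(c)$. For $c$ in the $m\BN$ part or in $B$, $\Delta(c)$ should be a point in some column that is reached only via the single translate of $C$ that places $c$ there — the cover being a genuine cover, and the blocks being disjoint, is what guarantees uniqueness. For $c \in F$, one invokes the BL lemma: since $F + (\bigcup_i W_i)$ contains a translate of all of $m\BZ$ (hence contains $\BN$ after an isomorphism of monoids), we may shrink the $W_i$'s so that no proper subset $F' \subsetneq F$ has $F' + W_i' = F + W_i'$, which furnishes the dependent elements for the points of $F$. One then checks these two sources of dependent elements do not interfere, again using the disjointness of blocks.

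The main obstacle I anticipate is the third step combined with the tightness of the bound: it is easy to write down a $W$ that makes $C$ an additive complement, but ensuring simultaneously that (a) every point of $m\BN \cup B$ is a guardian, (b) every point of $F$ is a guardian, and (c) these guardianship certificates are mutually consistent, forces the careful spacing that the inequality on $m$ encodes. I would expect the bulk of the work to be in choosing the $W_i$'s — specifically the monomial shifts attached to each $S_i$ and the internal arrangement within each group of $\wt f$ — so that the power-series coefficient computations come out with the right ``exactly once'' pattern; once the right $W$ is written down, the verification should indeed reduce to the routine calculations the paper promises.
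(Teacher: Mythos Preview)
Your plan is correct and matches the paper's approach closely: the paper builds $V$ out of ``car blocs'' $V_1,\dots,V_k$ (each packing $\wt f$ of the $W_i$'s alongside an interval of $\wt f$ singletons), a remainder bloc for the leftover $r=\on{mod}_{\wt f}n$ pieces, and filler blocs of full columns, then uses the formal power series to verify that the coefficient $[x^{(2i-1)\wt f+j-1}](C+V)(x)=q^{\ol K\sqcup(\ol F+\ol W_{i,j})}$ simultaneously gives dependents for $S_{i,j}$ (covered once, by $K$) and for $F$ (via the BL-lemma modification of $W_{i,j}$ that places $F$'s dependents below $\min B$). The bound $m\ge(2k+1)\wt f+r$ is exactly the requirement that the last such dependent column fits inside $[0,m-1]$, as you anticipated.
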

    \begin{proof}[Theorem 4 implies Proposition 3]
    First let $\wt f>1$. Indeed, Proposition 3 is the case when $m\BN\cup B$ has a cover which consists of a single set, namely $m\BN\cup B$ itself; in this case $n=1$, and the relevant bound is $m\ge \wt f+1$, which is of course always true. This recovers Proposition 3. 
    
    If $\wt f=1$, the argument in the previous paragraph will give that we are done if $m\ge 3$. The $m=1$ case is of course impossible since then $F$ would collide with the column containing the arithmetic progression. 
    
    Hence it remains to prove the case $\wt f=1$ and $m=2$. We claim that $C+(W\cup\{0\})=\BZ$ realizes $C$ as a MAC. Firstly, since $m\BZ_-\sm B$ is infinite in the negative direction, it contains an isomorphic copy of $\BN$ (as monoids), and so by the BL Lemma we can assume that $F$ is minimal with respect to the condition $F+W=m\BZ_-\sm B$. Secondly, since $F+W=m\BZ_-\sm B\subseteq 2\BZ$ lies in the 0-th column, we have that $0\not\in W$. Thirdly, since $F+W=m\BZ_-\sm B$ where $m\BZ_-\sm B$ is unbounded in the negative direction, we have that the sum $C+(W\cup\{0\})=(C+W)\cup (C+\{0\})$ contains infinitely many translates of the arithmetic progression $2\BN$ in the sum $C+W$. These translates are of the form $2\BN+k$, where $k\cong 1\pod 2$, and since there are infinitely many of them, they cover all of $2\BZ+1$. Furthermore, $C+W$ contains $F+W$ and therefore $m\BZ_-\sm B$. On the other half, $C+\{0\}$ contains $m\BN\cup B$. Hence, together, we have $C+(W\cup\{0\})$ contains $m\BN\cup B$, $m\BZ_-\sm B$, and $2\BZ+1$; hence $C+(W\cup\{0\})$ gives all of $\BZ$. Moreover, $C$ is a MAC with respect to $W\cup\{0\}$ since the removal of any element in $2\BN\cup B$ would lead to $C+\{0\}$ not containing that element, and the removal of any element in $F$ would lead to $F+W$ not giving all of $m\BZ_-\sm B$. 
    \end{proof}
    
    Let us remark that, in the worst case scenario, one can always take the set cover to be
    \[m\BN\cup\{b_1\}\cup\cdots\{b_k\}=m\BN\cup B\]
    in the above, so that $n=|B|+1$. This set cover satisfies the required conditions since, as noted earlier, $m\BZ\sm m\BN$ is always coverable by $F$, and $m\BZ\sm\{b\}$ is similarly coverable since it consists of two isomorphic (as monoids) copies of $\BN$, which we have established is coverable by finite sets. In particular, this means that
    \begin{proposition}
        Let $|B_{/m}|=|F_{/m}|=|A|=1$ with $F_{/m}=\{\wt f\}$, where WLOG let $A=\{0\}$. Then 
        \[m\ge \wt f+2\wt f\floor*{\frac{|B|+1}{\wt f}}+\rmod_{\wt f}(|B|+1)\]
        implies that \[C=m\BN\cup B\cup F\] arises as a MAC.
        
        In some sense, this is saying ``any $C=m\BN\cup B\cup F$ (where $|F_{/m}|=1$) is a MAC for sufficiently large $m$'', or ``any $C=m\BN\cup B\cup F$ ($|F_{/m}|=1$) is eventually a MAC''. 
    \end{proposition}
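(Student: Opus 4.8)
The plan is to obtain Proposition~5 as an immediate specialization of Theorem~4, applied to the most wasteful admissible set cover of $m\BN\cup B$. Write $B=\{b_1,\dots,b_k\}$, so $k=|B|$; since $|B_{/m}|=1$ and $A=\{0\}$ we have $b_i\equiv 0\pmod m$, i.e.\ $b_i\in m\BZ$, for every $i$. Take the cover
\[
S_0=m\BN,\qquad S_1=\{b_1\},\ \dots,\ S_k=\{b_k\},
\]
which consists of $n=k+1=|B|+1$ subsets of $m\BN\cup B$ whose union is exactly $m\BN\cup B$. It then remains only to check that each member satisfies the coverability hypothesis $m\BZ\sm S_i=F+W_i$ required by Theorem~4, and to read off the resulting bound on $m$.

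For $S_0=m\BN$ the complement inside $m\BZ$ is $m\BZ\sm m\BN=m\BZ_-$, a monoid copy of $\BN$; for a singleton $S_i=\{b_i\}$ with $b_i\in m\BZ$, the complement $m\BZ\sm\{b_i\}$ is the disjoint union of the two rays $\{b_i+jm:j\ge 1\}$ and $\{b_i-jm:j\ge 1\}$, each again a monoid copy of $\BN$ lying inside the single residue class $0$ mod $m$. Because $|F_{/m}|=1$, the set $F$ itself lies in one residue class mod $m$, so the argument already used in the proofs of Propositions~1--3 --- that a translate of $\BN$ contained in a fixed residue class is \emph{exactly} covered by $F$, via an explicit shift of $W$ --- applies to each such ray. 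Unioning the two covering sets then gives $m\BZ\sm\{b_i\}=F+W_i$ exactly, and the same fact gives $m\BZ_-=F+W_0$. Thus the cover $\{S_0,\dots,S_k\}$ satisfies all the hypotheses of Theorem~4 with $n=|B|+1$.

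Substituting $n=|B|+1$ into the bound of Theorem~4 produces precisely
\[
m\ge\wt f+2\wt f\floor*{\frac{|B|+1}{\wt f}}+\rmod_{\wt f}(|B|+1),
\]
so whenever this inequality holds Theorem~4 yields that $C=m\BN\cup B\cup F$ arises as a MAC, which is the assertion of Proposition~5. The concluding remark that ``$C$ is eventually a MAC'' needs nothing further: the right-hand side above depends only on $B$ and $\wt f$, hence is satisfied by all sufficiently large periods $m$.

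The only real content here lives in Theorem~4 itself, whose proof is the substantive one; the rest is bookkeeping. The single place that calls for a little care is that Theorem~4 demands the \emph{exact} equality $m\BZ\sm S_i=F+W_i$ rather than mere containment, so one must make sure the covers of the two rays constituting $m\BZ\sm\{b_i\}$ are exact and that gluing them does not spill onto the forbidden point $b_i$; both are immediate once one notes that each $F+W_i$ stays in a fixed residue class and that the two rays avoid $b_i$ by construction.
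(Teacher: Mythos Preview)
Your proof is correct and follows essentially the same approach as the paper: take the ``worst-case'' cover $\{m\BN,\{b_1\},\dots,\{b_k\}\}$ with $n=|B|+1$, verify that each complement $m\BZ\sm S_i$ is exactly of the form $F+W_i$ because it decomposes into monoid copies of $\BN$ (one ray for $S_0$, two rays for each singleton), and then invoke Theorem~4. Your extra caution about exactness of $F+W_i$ and avoiding spill onto $b_i$ is well-placed but, as you note, immediate from the single-residue-class constraint on $F$.
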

    \begin{proof}[Theorem 4 implies Proposition 5]
         See paragraph preceding Proposition 5.
    \end{proof}
    
    This idea of ``eventually being a MAC'' is one we will explore more and make more precise presently. 
    \begin{definition}
    Given any set of points $\wh C$ in the lattice $\BZ$, we may consider its image under the projection and then the isomorphism 
    \[\vphi\circ\pi_m\colon \BZ^2\lsurj Z_m\overset{\sim}{\lto}\BZ,\]
    which is a composition we will call $\pi_m$ by abuse of notation. Let this image be denoted $C=\pi_m(\wh C)$; then we may ask whether or not $C$ is a MAC inside $\BZ$. In fact, we can ask this question for varying $m$.
    
    In general, we will call such a $\wh C$ (respectively $\wh A,\ \wh B,\ \wh F$) a \textit{pattern} for/of $C$ (respectively $A,\ B,\ F$), and $\wh C_/$ is defined as \[\wh C_/\coloneqq \{x:(x,y)\in \wh C\te{ for some }y\}\] the set of $x$-coordinates of $\wh C$.
    \end{definition}
    This construction allows us to turn subsets of $\BZ^2$ into subsets of $\BZ$. When such a $\wh C$ has columns which are either consisted of finitely many points (giving $F$) or consisted of finite many points union a ``infinite ray'' of points going to the north (giving $B\cup (m\BN+A)$), this construction gives $C$ an eventually periodic set. 
    
    Just as a $\wh C$ gives rise to a $C$ via $C=\pi_m(\wh C)$, given a set $C\subseteq Z_m\cong\BZ$ we may also consider its ``natural'' preimage, denoted $\pi_m^{-1}(C)$, which is the unique preimage $\wh C$ satisfying $\wh C_/\subseteq [m]-1=\{0,\cdotsc,m-1\}$ 
    
    
    The next theorem will tell us that, in some sense, all patterns for eventually periodic sets are eventually MACs. That is, we take our eventually periodic set $C$ of period $m$, consider its preimage $\pi_m^{-1}(C)=\wh C$ under the construction above, and consider, for large growing $M$, $\pi_M(\pi_m^{-1}(C))$; the statement is that this set is a MAC in $\BZ$ for all sufficiently large $M$. But before stating this theorem let us define a quantity we will use: for $\wh C$ the pattern for $C$, consider $\wh C_/=\{x:(x,y)\in \wh C\te{ for some }y\}$ the set of $x$-coordinates; this $\wh C_/$ will be a set of separated maximal contiguous ``blocks'', i.e.
    \[\wh C_/=\{ c_{1,1},\cdotsc, c_{1,n_1}\}\cup\cdots \cup\{ c_{k,1},\cdotsc, c_{k,n_k}\},\]
    where each $\{ c_{i,1},\cdotsc, c_{i,n_i}\}$ has $c_{i,j}-c_{i,j-1}=1$ (i.e. is an arithmetic progression of common difference one) and $i<j\implies c_{i,\blt}<c_{j,\blt}$. 
    
    Let $\ell$ be the smallest possible length of a consecutive (i.e. an arithmetic progression of common difference one) set of integers formed by horizontal translates of $\wh C_/$, i.e. the minimal possible length of an interval of integers $[a,b]$ such that there exists $W$ such that $\wh C_/+W=[a,b]$. Then this number is at most
    \[\ell\le c_{k,n_k}-c_{1,1}+c_{k,1}-c_{1,n_1}\vrot{\coloneqq}{180}\te{outerrange}(\wh C_/)+\te{innerrange}(\wh C_/)\]
    since the set
    \[W=[0,\te{innerrange}(\wh C_/)-1]=\{0,1,\cdotsc,\te{innerrange}(\wh C_/)-1\}\]
    gives $\wh C_/+W=\{ c_{1,1}, c_{1,1}+1,\cdotsc, c_{k,n_k}+\te{innerrange}(\wh C_/)-1\}$, which is a consecutive run of integers. 
    \begin{theorem}
        For any fixed patterns of $\wh A$, $\wh B$, and nonempty $\wh F$, any
        \[C=(m\BN+A)\cup B\cup F\]
        is a MAC for sufficiently large $m$.
        
        In other words, ``all eventually periodic sets are eventually MACs''.
        
        The constructed bound for this is
        \[m\ge (\ell+1)(|A|+|B|+|F_{/}|),\]
        where $\ell$ denotes the minimal length possible of a consecutive block formed by horizontal translates of $\wh C_{/}$, which is further bounded by
        \begin{align*}
            \ell&\le \te{outerrange}(\wh C_/)+\te{innerrange}(\wh C_/).
        \end{align*}
    \end{theorem}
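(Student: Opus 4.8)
The plan is to reduce Theorem 7 to Theorem 4 by setting up a direct construction of the complement set $W$ that handles all columns of the pattern simultaneously. First I would fix the pattern $\wh{C} = \wh{A} \cup \wh{B} \cup \wh{F}$ and note that, for $m$ large, the projection $\pi_m$ keeps the columns of $\wh{C}$ pairwise disjoint — precisely because $\wh{C}_/$ has bounded diameter and $m$ exceeds it, no wraparound identification occurs among the occupied columns. So $C = \pi_m(\wh{C})$ decomposes as a disjoint union over the at most $|A| + |B| + |F_/|$ occupied columns: each column carries either a finite set of points (contributing to $F$, the ``$F$-type'' columns, those coming from $\wh{F}$) or a finite set together with an upward infinite ray (the ``$B \cup (m\BN+A)$-type'' columns, coming from $\wh{A}$ and $\wh{B}$). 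The idea is then to treat each $B \cup (m\BN + A)$-type column as its own copy of an eventually periodic set $m\BN \cup B_i$ in a single residue class, and to build the guardians using the $F$-type columns, exactly as in the proof of Theorem 4.

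The key steps, in order, are: (1) Show that for $m \ge (\ell+1)(|A|+|B|+|F_/|)$ there is enough ``horizontal room'' — here $\ell$ is the minimal length of a consecutive block obtained from horizontal translates of $\wh{C}_/$, so a single translation set $W_{\mathrm{horiz}}$ of size $\ell$ smears $\wh{C}_/$ across an interval of $\ell$ consecutive columns, and the bound $m \ge (\ell+1)(\cdots)$ guarantees we can fit one such ``smeared block'' of width $\ell$ for each of the $|A|+|B|+|F_/|$ columns with a separating gap, so that the covering behavior in distinct residue classes does not interfere. (2) For each of the ray-carrying columns (residue class), use the fact — already established in the discussion preceding Proposition 5 and used in Theorem 4 — that $m\BZ$ minus a finite set, or $m\BN \cup B_i$, admits the relevant covering by $F$; invoke the BL lemma to make $F$ (equivalently $\wh{F}$) minimal with respect to covering the negative half of each such column, which installs a dependent element for every point of every $\wh{F}$-translate. (3) Adjoin a shift set (playing the role of $\{0\}$ in the proof of ``Theorem 4 implies Proposition 3'') that covers the positive rays and the finite sets $\wh{B}$ by vertical translation within their own columns; this makes each point of the rays and of $\wh{B}$ a guardian, since deleting it leaves a hole in its own column that no other translate fills (the other residue classes being disjointly handled). (4) Assemble $W$ as the union of these pieces and verify $C + W = \BZ$ by checking column-by-column: $F$-type columns are filled by the $\wh{F}$-covering, ray-columns by the vertical shifts plus the spillover of $\wh{F}$-translates into them, and then conclude minimality from (2) and (3).

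The main obstacle I expect is step (1) combined with the bookkeeping in step (4): ensuring that when we translate the whole pattern $\wh{C}$ (not just individual columns) by elements of $W$, the translates of the $F$-type columns and the ray-type columns land where we want without one residue class's garbage landing on another's guardian. This is exactly the ``enough space to maneuver'' heuristic mentioned in the introduction, and making it rigorous requires carefully tracking, for each $w \in W$, which columns of $\BZ/m\BZ$ the translate $\wh{C} + w$ occupies, and arguing that the $(\ell+1)$-spacing in the bound $m \ge (\ell+1)(|A|+|B|+|F_/|)$ forces these occupation patterns to be ``block-diagonal'' across residue classes. I anticipate the formal power series bookkeeping formalism promised in the introduction is deployed precisely here, to turn the column-by-column coverage check into a routine coefficient computation. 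The reduction ``Theorem 7 $\Rightarrow$ earlier propositions'' is then immediate: a single-column pattern recovers Theorem 4's setup, and the bound $(\ell+1)(|A|+|B|+|F_/|)$ specializes to something weaker than the sharp bound of Theorem 4 in that case, which is acceptable since Theorem 7 only claims ``sufficiently large $m$''.
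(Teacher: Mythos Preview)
Your overall ``cars in a garage'' framework is right and matches the paper's strategy, but there is one genuine gap and one structural difference worth noting.

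\textbf{Structural difference.} The paper does not prove this statement (their Theorem 6) directly. It is derived as a one-paragraph corollary of a more general pattern result (their Theorem 7), by exhibiting the specific set cover $\wh K_i = \Col^+(\wh a_i) \cup \bigcup_{\wh b\in \wh B_i}\{\wh b\}$ of each infinite column, observing that $\Col^-(\wh a_i)$ and $\Col(\wh K_i)\setminus\{\wh b\}$ are each coverable by any single finite column $F_j$, and reading off the bound $(\ell+1)(|F_/|+|A|+|B|)$. Your direct construction is essentially the proof of Theorem 7 unfolded on this particular cover, so you are not wrong, just re-deriving the general machine in situ.

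\textbf{Gap.} Your step (2) tries to install dependents for $F$ by applying the BL lemma to the cover of the negative half of each ray-column. That is the Theorem 4 mechanism and it only works when $|F_/|=1$: when $\wh F$ occupies several columns, a translate of $C$ that places one $F_j$ into a given ray-column sends every other $F$-column somewhere else, and the BL minimality you obtain is minimality of $F_j$ alone in that column, not of all of $F$. If $|A|+|B|<|F_/|$ you simply do not have enough ray/singleton blocks to service every $F_j$ this way. The paper handles this by allocating a dedicated car-block for \emph{each} finite column $F_j$ and, in its Dep slot, invoking Kwon's theorem to get $W_j$ with $\ol F_j+\ol W_j=\BZ$ minimally --- this is exactly why the bound carries a summand $|F_/|$, but you never say what goes in those $|F_/|$ blocks. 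Relatedly, your step (3) phrase ``vertical translation within their own columns'' is not literally possible: translating $C$ moves every column at once. What the $(\ell+1)$-spacing actually buys is a horizontal offset per block that lands the one desired column of $C$ in the Dep slot while all remaining columns fall into the $Q$-filler slots (which are already $\supseteq\BZ$ and absorb them harmlessly).
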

    
    Theorem 6 is also a corollary of a more general theorem:
    \begin{theorem}
        Let $\wh C\subset\BZ^2$ be horizontally bounded (i.e. $\wh C_/$ is a bounded set). Let $\wh F=\wh F_{1}\cup\cdots\cup \wh F_{t}$ denote the (nonempty) columns with finitely many points (where each $\wh F_i$ lies in a distinct column $\Col(\wh F_i)$), and let $\wh K=\wh K_1\cup\cdots\cup \wh K_r$ denote the columns with infinitely many points (where again each $\wh K_i$ lies in a distinct column $\Col(\wh K_i)$). For $\wh C_{/}$ the set of $x$-coordinates of $\wh C$, let $\ell$ denote the minimal length possible of a consecutive block formed by horizontal translates of $\wh C_{/}$, which satisfies
        \begin{align*}
            \ell&\le \te{outerrange}(\wh C_/)+\te{innerrange}(\wh C_/).
        \end{align*}
        
        Suppose that for each $\wh K_i$ there exists a set cover $\wh{\cal S}_i$ consisting of
        \[\wh S_{i,1}\cup\cdots\cup \wh S_{i,|\wh{\cal S}_i|}=\wh K_i\]
        such that, for each $i,j$, there is some collection of (possibly empty) $\wh W_{i,j;\mu},\wh U_{i,j;\nu}\subseteq\BZ^2$ such that
        \[\Col(\wh K_i)\sm \wh S_{i,j}=\bigcup_{\mu=1}^{t} \pr*{\wh F_\mu+\wh W_{i,j;\mu}}\cup\bigcup_{\nu=1}^r \pr*{\wh K_\nu+\wh U_{i,j;\nu}}.\]
        Then
        \[m\ge (\ell+1)\pr*{|\wh F_{/}|+\sum_{i=1}^r |\wh{\cal S}_i|}\]
        implies that $C=\pi_m(\wh C)$ is a MAC in $\BZ$.
    \end{theorem}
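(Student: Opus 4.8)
The plan is to work entirely inside the strip model $Z_m$ and to build a complement $W$ ``slot by slot''. Fix a packing set $W_{\te{pack}}$ witnessing $\ell$, i.e. with $\widehat C_{/}+W_{\te{pack}}=[0,\ell-1]$ a block of $\ell$ consecutive integers (one exists, e.g. $W_{\te{pack}}=[0,\te{innerrange}(\widehat C_{/})-1]$). Put $N=|\widehat F_{/}|+\sum_{i=1}^r|\widehat{\mathcal S}_i|$. Since $m\ge(\ell+1)N$ we may partition the columns $\{0,\dots,m-1\}$ of $Z_m$ into $N$ consecutive \emph{slots}, each of width at least $\ell+1$, and index them by the $N$ ``pieces'': one slot $\sigma^{F}_\mu$ for each finite column $\widehat F_\mu$ ($\mu=1,\dots,t$), and one slot $\sigma^{K}_{i,j}$ for each pair $(\widehat K_i,\widehat S_{i,j})$. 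The governing principle is that all translates of $\widehat C$ assigned to a slot will occupy columns lying in that slot only, so that translates belonging to distinct slots can never interact; this localizes both the covering and the guarding.

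First I would dispose of the covering, which is the easy half (the case $\widehat K=\emptyset$ being immediate from Kwon's theorem, as then $C$ is finite). Inside a slot of width $w\ge\ell+1$, translating $\widehat C$ by the horizontal amounts $(\te{slotstart})+W_{\te{pack}}+[0,w-\ell]$ lands a family of copies of $\widehat C$ whose columns exactly fill that slot; adjoining all vertical translates — adding arbitrary multiples of the period, which in $Z_m$ slide the infinite rays up and, after wrapping around the seam, arbitrarily far down — these copies cover every cell of the slot. Taking the union of these translate sets over all $N$ slots yields $W$ with $C+W=Z_m$.

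Next comes the guarding, which is the crux. For a finite column $\widehat F_\mu$: inside $\sigma^{F}_\mu$ the copies of $\widehat F_\mu$ form translates of a finite set whose union contains a monoid copy of $\BN$, so the BL lemma lets us thin the translate set of that slot so that $\widehat F_\mu$ becomes \emph{minimal} there; every point of $\widehat F_\mu$ then acquires a dependent element sitting in a column of $\sigma^F_\mu$, and by slot-disjointness no translate from any other slot, and — after the thinning — no other part of $C$ inside $\sigma^F_\mu$, can cover it. For an infinite column $\widehat K_i$: in the slot $\sigma^{K}_{i,j}$ we position one distinguished copy of $\widehat C$ so that its $\widehat K_i$-column lands on a fixed target column $\tau$, and we use the hypothesis $\Col(\widehat K_i)\sm\widehat S_{i,j}=\bigcup_\mu(\widehat F_\mu+\widehat W_{i,j;\mu})\cup\bigcup_\nu(\widehat K_\nu+\widehat U_{i,j;\nu})$ to realize, by further translates of $\widehat C$ whose relevant piece is slid onto $\tau$ while the rest of $\widehat C$ stays within the slot, a covering of the whole slot \emph{except} precisely the copy of $\widehat S_{i,j}$ sitting in column $\tau$. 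That copy of $\widehat S_{i,j}$ is then covered only by the distinguished copy, via $\widehat K_i$, so every point of $\widehat S_{i,j}$ is guarded; since $\widehat K_i=\bigcup_j\widehat S_{i,j}$, all of $\widehat K_i$ is guarded. Performing this in each slot produces one $W$ realizing $C$ as a MAC.

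The hard part will be the bookkeeping in the last step: one must verify that, when the translates of $\widehat C$ needed to cover ``everything but the hole'' are slid so that their $\widehat F_\mu$- or $\widehat K_\nu$-piece lands on $\tau$, the remaining columns of those copies still fall inside the $\ell+1$ columns of $\sigma^{K}_{i,j}$ (this is exactly what the width $\ell+1$ together with the packing set $W_{\te{pack}}$ buy us), and — more delicately — that these copies do not accidentally cover a cell of the distinguished copy of $\widehat S_{i,j}$ in column $\tau$, nor spoil the uniqueness already arranged elsewhere. It is precisely to keep this verification routine that we introduce the formal-power-series bookkeeping: each slot's contribution to $C+W$ is recorded as a coefficientwise-nonnegative series, ``covered exactly once'' becomes ``the coefficient equals $1$'', and slot-disjointness becomes the statement that these series are supported on disjoint monomials, so that the global check collapses to finitely many slot-local computations.
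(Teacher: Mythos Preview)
Your slot-based architecture and your treatment of the $K$-slots are essentially the paper's, but there are two genuine gaps in the execution.

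The more serious one is the $F$-slot guarding. Your step-1 covering uses horizontal shifts $W_{\te{pack}}+[0,w-\ell]$, so \emph{every} column of the slot is reached by several columns of $C$; concretely, with $C_/=\{0,1,2\}$, $\widehat F_1$ at column $1$ and $\widehat K$'s at $0$ and $2$, there is no slot-column hit by $\widehat F_1$ alone. Thinning the vertical translates at one horizontal shift makes $\widehat F_\mu$ minimal for \emph{that} contribution, but the other columns of $C$ (at other horizontal shifts, still with all vertical translates) continue to cover the target column fully, so $\widehat F_\mu$ acquires no dependent element. The paper's fix is to not fill the whole slot: use $Q_/=W_{\te{pack}}$ alone (covering only columns $[0,\ell-1]$ of the slot), reserve column $\ell$ as the dependent column, and add a single special horizontal shift at $\ell-\tilde f_\mu$ with vertical shifts $\overline{W_\mu}$ coming from Kwon's theorem (so $\overline{F_\mu}+\overline{W_\mu}=\BZ$ minimally). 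Then column $\ell$ receives exactly $\overline{F_\mu}+\overline{W_\mu}$ and nothing else. The $K$-slots need the same structure: a $Q$-filler on $[0,\ell-1]$ plus the distinguished and $W/U$ translates aimed at column $\ell$; your description omits this filler, and if you retain step~1's covering there it kills the hole at $\widehat S_{i,j}$.

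The second gap is your governing principle that translates assigned to a slot occupy only that slot's columns. This is false: the special translate at horizontal shift $\ell-\tilde c$ lands the other columns of $C$ at positions $\ell-\tilde c+C_/$, which can reach up to $\ell+(\max C_/-\min C_/)$ and hence into the \emph{next} slot. What actually saves the argument is not slot-disjointness but the finer statement that these spillovers land only in the next slot's filler columns (already carrying coefficient $q^{\BZ}$), never on its dependent column $\ell$; this is where the inequality $\ell>\max C_/-\min C_/$ and the placement of the dependent column at the slot's far end are used. Your formal-power-series bookkeeping plan is exactly right for verifying this, but the claim to verify is ``$\Err$ never hits a $\Dep$ column'', not ``$\Err$ stays in its own slot''.
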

    
    \begin{proof}[Theorem 7 implies Theorem 6] 
        To recover Theorem 6 from Theorem 7, take $\wh C=\pi_m^{-1}((m\BN+A)\cup B\cup F)$ lying in the strip $0\le x\le m-1$. Then $\wh C_/$ is bounded. In this case the finite columns of $\wh C$ are $\wh F=\pi_m^{-1}(F)$ and the infinite columns are $\wh K=\pi_m^{-1}((m\BN+A)\cup B)$, which can be written as $\wh K=\pi_m^{-1}(m\BN+A)\cup\pi_m^{-1}(B)=\Col^+(\wh A)\cup \wh B$, where $\Col^+(\wh A)\coloneqq \bigcup_{\wh a\in \wh A}\Col^+(\wh a)$. It should be noted that each specific column of $\wh K$ is of form $\wh K_i=\Col^+(\wh a_i)\cup \wh B_i$, where $\wh a_i$ is an element of $\wh A$ and $\wh B_i$ are the elements of $\wh B$ lying in the same column as $\wh a_i$. $\ell$ is the same as before. 
         
         Now take the set cover of the infinite columns
         \[\wh K=\Col^+(\wh a_1)\cup\cdots\cup \Col^+(\wh a_{|A|})\cup \{\wh b_1\}\cup\cdots\cup\{\wh b_{|B|}\},\]
         i.e.
         \[\wh K_i=\Col^+(\wh a_i)\cup\bigcup_{\wh b\in \wh B_i}\{\wh b\},\]
         where $|\cal S_i|=1+|B_i|$ and $\sum_i |\cal S_i|=|A|+|B|$. This set cover satisfies the hypotheses since firstly
         \[\Col(\wh K_i)\sm \Col^+(\wh a_i)=\Col^-(\wh a_i)=F_j+W_{i,j}\]
         for any $F_j$; to see this recall from earlier that any half column (which is isomorphic as a monoid to $\BN$ if we take addition to only affect the $y$ coordinate) is coverable by any finite set. Similarly, secondly
         \[\Col(\wh K_i)\sm \{\wh b\}=F_j+W_{i,j}\]
         for any finite column $F_j$, which is true since $\Col(\wh K_i)\sm \{\wh b\}$ consists of two infinite rays, one pointing up and one pointing down, and as established earlier these rays, each isomorphic as monoids to $\BN$, are coverable by finite sets.
         
         Then Theorem 7 states that
         \[m\ge (\ell+1)(|F_{/}|+|A|+|B|)\]
         implies that $C$ is a MAC, which is precisely the statement of Theorem 6.
    \end{proof}
    
    The reader might note that Theorems 4 and 7 say very similar things, namely that for large enough $m$ the projection under $\pi_m$ of some pattern will be a MAC in $\BZ$. However, Theorem 4 is not a corollary of Theorem 7 due to the bounds. Indeed, applying Theorem 7 to the setting of Theorem 4 will yield only
    \[m\ge (2\wt f+1)(n+1)\]
    where $n$ is the size of the set cover $\cal S$, which is much worse than the bound given in Theorem 4. As the reader will see in the proofs in the following section, the construction for Theorem 4 feels ``tight'' or ``efficient'' in some sense while in Theorem 7 we are much sloppier. This is perhaps to be expected; Theorem 4 deals with a very specific type of set (namely $|A_{/m}|=|B|=|F_{/m}|=1$), while Theorem 7 deals with a much broader class, so one might expect that it is easier to derive better bounds in the former case than the latter. 
    
    We should also note that, by choosing appropriate $K$, Theorem 7 answers the question of Burcroff and Luntzlara mentioned at the end of the Introduction in a large class of cases. Indeed, writing $C_i=(m\BN+A_i)\cup B_i\cup F_i$ and $A_i=\bigcup_j A_{i,j}$ and $B_i=\bigcup_j B_{i,j}$ (where $A_{i,j},B_{i,j}$ lies in a single column), by taking $K$ to be $K=(m\BN+A_1)\cup B_1 \cup (-(m\BN+A_2))\cup (-B_2)\subseteq C_1\cup(-C_2)$, Theorem 7 tells us that whenever we can partition $B_{i,j}$ by $S_{i,j,k}$ such that $\Col^-(\min A_{i,j})\sm S_{i,j,k}$ can be set-covered by appropriate translates of the different columns in $C_1,C_2$, if $m$ is larger than some bound depending on the size of our cover, the number of equivalence classes mod $m$ represented by $F_1\cup (-F_2)$, and the horizontal distribution (when we draw it in $Z_m$ form) of $C_1$ and $C_2$, then $C_1\cup (-C_2)$ is a MAC. As an example, in the case that $B_1=B_2=\emptyset$ and at least one of $F_1,F_2$ is nonempty, since translations of finite sets can cover $\BN$, we obtain the bound that 
    \[m\ge (\ell+1)(|(F_1)_/|+|(F_2)_/|+|(A_1)_/|+|(A_2)_/|)\]
    implies $C_1\cup(-C_2)$ arises as a MAC. 
    
    As another example of Theorem 7, we could consider the case $K=m\BN\cup B$ where $B$ is infinite and $F=\{f\}$ is a singleton. Then we can take the set cover to have one set, namely $K$ itself, for $m\BZ\sm(m\BN\cup B)=m\BZ_-\sm B$ is coverable by (translates of) the singleton $\{f\}$. Furthermore, in this case $\ell=2\wt f$. Then the bound from Theorem 7 tells us that
    \[m\ge (2\wt f+1)2\]
    implies that $C=m\BN\cup B\cup \{f\}$ is a MAC. It turns out this is true for smaller $m$ as well, as long as not both of $m=2$ and $m\BN\cup B=m\BZ$ are true.
    
    
    Having discussed at length the results, it remains to prove Theorems 4 and 7 (as we have noted in the discussions above, all other results are actually corollaries of these two).


\section{A Formalism of Formal Power Series}
    Before giving the proofs of our main theorems, namely Theorems 4 and 7, we will develop a language of ``formal power series'' in which the proofs are much easier to relate. We should however give a disclaimer beforehand that these formal power series do not possess the \textit{soul} of the technique of generating functions, which is namely the idea of collapsing long expressions into short ones or vice versa (e.g. the identity $\sum_n x^n=\frac{1}{1-x}$) in order to achieve clever manipulations. The formal power series we introduce here will not engage in such acrobatics and will instead serve solely as bookkeepers. 
    
    The point of this is to make it easier to show that $C$ is a MAC, given a claimed complement $W$. Roughly, this formalism will turn a set $S\subseteq\BZ$ into a formal power series.
    
        In our strip construction $Z_m$, each column $\Col(i)=\{(x,y)\in Z_m:x=i\}$ can be thought of as a copy of $\BZ$ with the obvious addition structure (add the $y$-coordinates). We will denote
    \begin{definition}
        Endow $\Col(i)=\{(x,y):x=i\}\subset Z_m$ with an isomorphism
        \[\Col(i)\cong \BZ\]
        where we take addition in $\Col(i)$ to be addition of the $y$-coordinates. This isomorphism is such that $(i,0)\in \Col(i)$ corresponds to $0\in \BZ$.
        
        
        For a set $S\subseteq \Col(i)\subset Z_m$ which lies entirely in a single column, we will let $\ol S\subseteq \BZ$ denote its image in $\BZ$ under the isomorphism $Z_m\supset \Col(i)\cong\BZ$. 
        
        In the backwards direction, given a set $S\subseteq\BZ$, we will let $\wc S(i)\subseteq \Col(i)\subset Z_m$ denote its image under the inverse isomorphism. 
    \end{definition}
    The above is nothing more than saying that the set of all integers in a single congruence class mod $m$ forms a copy of $\BZ$.
    
    
    But before we can describe how our formalism will turn the data of a set $S\subseteq\BZ$ into an object, we must first describe in what world this object will live. In the following definition the symbol $\sqcup$ refers to the disjoint union, which keeps track of multiplicities, and the symbol $\oplus$ refers to the Minkowski sum of sets $+$ except with multiplicities taken into account\footnote{For example, $\{0,1\}+\{1,2\}=\{1,2,3\}$ whereas $\{0,1\}\oplus\{1,2\}=\{1,2,2,3\}$.}, i.e. a ``disjoint'' Minkowski sum.
    \begin{definition}
        Consider the $\BZ$-algebra generated by symbols of form $q^A$ for $A\in\BN^\BZ$ (here $\BN^\BZ$ refers to subsets of $\BZ$ with multiplicity allowed, i.e. a ``weighted subset'' with weights, which encode multiplicity, in $\BN$), modded out by relations $q^{\emptyset}=0$, $q^{\{0\}}=1$, $q^Aq^B-q^{A\oplus B}=0$, and $q^A+q^B-q^{A\sqcup B}=0$; in symbols this is
        \[\cal Q\coloneqq \quot{\BZ\big[\{q^A:A\in\BN^\BZ\}\big]}{\set{q^\emptyset=0,\ q^{\{0\}}=1,\ q^Aq^B=q^{A\oplus B},\ q^{A}+q^B=q^{A\sqcup B}}}.\]
        
        We can then consider the following polynomial ring over this algebra:
        \[\Xi_m\coloneqq \mnfrac{\cal Q[[x]]}{\wangle{x^m-q^{\{1\}}}}.\]
        
        When $q^B=q^A+q^C$ for some set $C\in\BN^\BZ$, we will say $q^B\ge q^A$. 
    \end{definition}
    \textit{Abuse of notation}: For a singleton $S=\{n\}$, instead of writing the cumbersome $q^S=q^{\{n\}}$, we will write $q^n$. For example we will write $q^1$ in place of $q^{\{1\}}$. Similarly we will write $q^0=1$ instead of $q^{\{0\}}$. We will also later drop the notation $\oplus$ and only use $+$, relying on context\footnote{Perhaps we should note that $A+B\subseteq A\oplus B$; indeed, once we remove multiplicity, these two sets are the same.} for whether we consider multiplicity or not. Generally speaking, whenever we are in the context of these exponential symbols, or later in the context of formal power series, the symbol $+$ will be taken to mean with multiplicity. 
    
    Our choice of notation $\BN^\BZ$ here is in line with the notation $\{0,1\}^\BZ$ for the power set $\cal P(\BZ)$. Hence regular subsets of $\BZ$ are those members of $\BN^\BZ$ whose weights (i.e. multiplicities) are either 0 or 1, so that every member of $\cal P(\BZ)$ is a member of $\BN^\BZ$.
     
    The reason why we take this ideal to quotient by in the definition of $\Xi_m$ will be clear later. It is in this ring $\Xi_m=\mnfrac{\cal Q[[x]]}{\wangle{x^m-q^{1}}}$ that our formal power series shall live.
    
    These symbols, appropriately, behave like exponentials and correspond to
        \begin{align*}
            \emptyset&\llra q^{\emptyset}=0,\\
            \{0\}&\llra q^0=1,\\
            A&\llra q^A,\\
            A\oplus B&\longleftrightarrow q^Aq^B=q^{A\oplus B},\\
            A\sqcup B&\llra q^A+q^B,\\
            B\sm A&\llra q^B-q^A,
            \intertext{and the distributive property of multiplication corresponds to }
            A\oplus (B\sqcup C)=(A\oplus B)\sqcup (A\oplus C)&\llra q^A(q^B+q^C)=q^Aq^B+q^Aq^C=q^{A\oplus B}+q^{A\oplus C}.
            \intertext{Defining $B\ominus A$ to be the set $C$ such that $A\oplus C=B$ (if it exists), we also have}
            B\ominus A&\llra \frac{q^B}{q^A}.
        \end{align*}
    Note that by extending the notion of setminus $B\sm A$ to include cases when $A$ is not necessarily a subset of $B$, we can make sense of expressions such as $-q^A$. Indeed, treating $\BN^\BZ$ as a semi-ring with addition corresponding to $\sqcup$ and multiplication corresponding to the disjoint Minkowski sum $\oplus $, we can complete this to a ring by introducing symbols of form $B\sqminus A\llra q^B-q^A$, appropriately quotienting so that $B\sqminus A=D\sqminus C\iff B\sqcup C=D\sqcup A$, i.e. $q^B-q^A=q^D-q^C\iff q^B+q^C=q^D+q^A$.

    The idea of this formalism is to do the following: 
    \begin{definition}
    Given a set $S\subseteq Z_m\cong\BZ$, write $S=S_1\sqcup\cdots\sqcup S_n$ where each $S_i$ lies in a single column labeled by distinct $\wt s_i\in\BZ/m\BZ$. Then the information in $S$ is the same as the data
    \[C\longleftrightarrow \Big\{ (\ol S_1,\wt s_1),\cdotsc,(\ol S_n,\wt s_n)\Big\},\]
    where the first entry in each pair indicates the ``shape'' of the elements of $S$ in the column labeled by the second entry of each pair. We call this the \textit{shape list form}. We also overload the symbol $\sqcup$ and write this as
    \[C=(\ol S_1,\wt s_1)\sqcup\cdots\sqcup (\ol S_n,\wt s_n)=\bigsqcup_{i=1}^n (\ol S_i,\wt s_i).\]
    \end{definition}
    
    Our formalism will now take this data and put it into a power series in the following way. Define:\nc{\Rng}{\on{Range}}
    \begin{definition}
        Given a set $S=S_1\sqcup \cdots\sqcup S_n\subseteq Z_m$, where each $S_i$ lies in a single column, let us write 
        \[S(x)\coloneqq \sum_{i=1}^n q^{\ol S_i} x^{\wt s_i}\overset{\te{abuse}}{\coloneqq} \sum_{i=1}^n q^{ S_i} x^{ s_i}\in \Xi_m,\]
        where in an abuse of notation we have written $x^{s_i}$ instead of $x^{\wt s_i}$ and $q^{S_i}$ instead of $q^{\ol S_i}$ for the sake of simplicity. We write $[x^i]S(x)$ for the coefficient in front of the $x^i$ term (after reducing mod $x^m-q^1$ until all powers are less than or equal to $m-1$).
        
        Then it is obvious that, for $A,B\subseteq Z_m$, we have
        \begin{align*}
            A(x)+B(x)&=(A\sqcup B)(x),\\
            A(x)B(x)&=(A+B)(x).
        \end{align*}
        
        Note well that such formal power series are in bijection with weighted (i.e. we allow multiplicities for each element) subsets of $Z_m$.
        
        For a power series $S(x)\in\Xi_m$ presented in a form such that the exponents appearing in $S(x)$ are in the range $[0,m-1]$, let us denote 
        \[\Rng(S(x))\coloneqq \{\te{exponents appearing in $S(x)$}\}=S_{/m}.\]
    \end{definition}

    Now we should also explain why in defining $\Xi_m$ we are quotienting out by the ideal $\wangle{x^m-q^1}$. This is simply because our sets live in $Z_m$. For example, for $A=\{1\}$, we have $A(x)=q^1 x^0$, which is the same as $A(x)=q^0 x^m$ since in the latter description $q^S\coloneqq q^{\ol S}=q^0$ corresponds to $S=\{m\}=\{(m,0)\}\subset Z_m$, which is the same as $\{(m,0)\}=\{(0,1)\}\subset Z_m$ due to the definition of $Z_m$.
    
    In claiming that $(A+B)(x)=A(x)B(x)$ and $(A\sqcup B)(x)=A(x)+B(x)$, we have omitted some minor checks; these are covered in the Appendix at the end of the paper.

    The key in this definition is that now, when claiming that $C$ is a MAC to $W$, rather than compute $C+W$ and show it is $\BZ$, meanwhile proving that all elements of $C$ have dependents, we can instead take their formal power series, multiply, and check the coefficient in front of each $x^i$, which is equivalent to checking column-by-column that $C+W=\BZ$ minimally. As expected, there is no new content in this formalism (it's just notation), but this will make writing down certain proofs much more concise. 
    
    More precisely, the condition of $C+W=\BZ$ is the same as every $i$-th coefficient $[x^i](C+W)(x)$ of $(C+W)(x)$ having $[x^i](C+W)(x)=q^S$ where $S\supseteq\BZ$ (here $i\in[0,m-1]$), and the condition of $C$ being minimal will be checked within each term $q^S x^s$; there, we will check whether or not $S\supseteq \BZ$ contains elements dependent on elements of $C$, and whether or not the union over all such $S$ covers all elements of $C$ (so that every element of $C$ has a dependent element). 
    That is, checking that $C$ is minimal will be equivalent to giving a partition of $C$
    \[\bigcup_i C_i=C\]
    such that
    \[\foralls i,\ \existss w\in W,\ n\in\BZ/m\BZ:[x^n](C+W)(x)= q^{C_i+w}+q^S,\ \wt S\cap \wt{(C_i+w)}=\emptyset.\]
    In words, this is saying that for any $C_i$ in this partition, there is some element $w\in W$ and column number $n\in\BZ/m\BZ$ such that the coefficient $[x^n](C+W)(x)$, which counts the results of $C+W$ in the $n$-th column with multiplicity, contains the elements of $C_i+w$ exactly once.

\section{Proofs of Main Results}
    In this section we shall prove the main results, namely Theorems 4 and 7. 
    
    The language of the formal power series will make this process easier to communicate, but for sake of transparency we should say that this is not how we came up with these theorems. Generally, perhaps a reasonable strategy to come up with these statements might be to stare at and play with pictures like the strip construction $Z_m$ (this is what we did), and to write down a proof one would use these formal power series. In writing down the proofs in these sections, we have tried our best to be very explicit and write down all the computational details, even if they are completely straightforward; as a result the proofs are rather long, but we hope that the trade-off is that the readers will be able to read along and confirm that the proofs are correct without having to separately compute/check things on paper themselves.
    
    Perhaps it should be noted that these theorems are much easier to see pictorially than symbolically; as unfortunately is the case at times with mathematics at large, symbols, whilst affording more precision, obscure intuition and the flow of logic.

\subsection{Proving Theorem 4}
We first prove Theorem 4. The idea is to give a construction for $C+W=\BZ$ in which all elements of $C$ have a dependent element; namely, each $S_i\subseteq m\BN\cup B$ will have its dependent elements concentrated in a single column. We shall spread these columns containing dependents out amongst ``cars'' of length $2\wt f$. Here the word ``cars'' refers to the vehicle, in which ``passengers'' (i.e. the columns containing dependents of $C$) will fit, and we will fit all these cars into a giant garage (i.e. $Z_m$), and the idea is that if the garage is long enough (i.e. if $m$ is big enough) then all these cars will fit.


In the proof below, the rough outline will be as follows: we will give a construction of a set $V$ and claim that $C$ is a MAC to $V$; we will calculate the formal power series of these sets; we will multiply the formal power series together; and lastly we will check term-by-term in the power series that $C+V=\BZ$ and that $C$ is minimal with respect to this condition. Since the power series determines the set, we could have just given $V(x)$, but we go the extra step of writing down what $V$ is for this first proof for the sake of transparency. 
\begin{proof}[Proof of Theorem 4]
     For ease of reading we will separate the proof into sections which are italicized. 
     
     \medskip 

     \textit{(1) Observations}. Let $\{S_i\}$ be the set cover in the theorem assumptions, i.e. $S_1\cup\cdots\cup S_n=m\BN\cup B$  such that each member $S_i$ has $m\BZ\sm S_i=F+W_i$ for some $W_i\subseteq\BZ$. Note that the finiteness of $B$ implies $m\BN\cup B$ is bounded below which implies $S_i$ must also be bounded below. As such, $m\BZ\sm S_i$ will contain a shifted copy of $\Col^-(0)$; that is, an infinite ray of integers (more precisely this ray consists of multiples of $m$) starting at $\min(S_i)-m$ and pointing in the negative direction; in particular this infinite ray is 
     \[-m\BN+\min(S_i)-m\subseteq m\BZ\sm S_i;\]
     we can think of this as an isomorphic (as monoids) copy of $\BN$ sitting below $\min(S_i)$. 
     
     By assumption there is some $W_i$ such that $F+W_i=m\BZ\sm S_i=\Col(0)\sm S_i$. Consider a minimal $W_i'\subseteq W_i$ such that $F+W_i'=(\Col^+(0)+y_i)\sm S_i$ for some\footnote{Note that such an $y_i$ is necessarily nonpositive, where we take $\min B=0$ if $B=\emptyset$. Also recall that $\Col^+$ is defined to be inclusive (including 0) whereas $\Col^-$ is defined to be exclusive (not including 0). Note also that this $y_i$ may depend on $i$.} $y_i\in\Col^-(0)+\min B+1$. (This $W_i'$ exists because $F$ is finite, so that if the sum $F+W_i=\Col(0)\sm S_i$ is some set continuing infinitely in the negative direction, then necessarily eventually (in the negative direction) this sum is just consecutive translations of $F$, i.e. there is some $W_i''\subseteq W_i$ such that $F+W_i''=\Col^-(0)+z_i$ for some negative number $z_i$.) Then, noting that 
     \[F+(\Col^-(0)+y_i-\max F)=\Col^-(0)+y_i\cong\BN,\]
     where the latter isomorphism is that of monoids, we may apply the BL Lemma to obtain a modification/replacement\footnote{This notation implies a dependence on $i$, and this dependence comes from $y_i$.} $U_i$ of $(\Col^-(0)+y_i-\max F)$ such that
     \[F+U_i=\Col^-(0)+y_i\]
     with $F$ minimal, i.e. wherein no proper subset $F'\subset F$ satisfies the same equation. Then, with respect to $F+U_i=\Col^-(0)+y_i$, we have that every element of $F$ has a dependent element in $\Col^-(0)+y_i$. Then consider\footnote{Note that the disjoint union here is redundant, as $W_i'$ and $U_i$ are necessarily disjoint because their sums with $F$ are disjoint. } $W_i'\sqcup U_i$; the sum of $F$ with this set is
     \[F+(W_i'\sqcup U_i)=(F+W_i')\sqcup (F+U_i)=\bigpr{(\Col^+(0)+y_i)\sm S_i}\sqcup \bigpr{\Col^-(0)+y_i}=\Col(0)\sm S_i.\]
     
   Hence for the rest of this proof, by redefining the symbol $W_i$ to be
     \[W_i\coloneqq W_i'\sqcup U_i,\]
     we can assume that 
     \[F+W_i=m\BZ\sm S_i,\]
     and $F$ is minimal with respect to this equation; moreover, by construction we can find dependent elements of $F$ which are of form $\BZ\ni \delta< y_i\le \min B$. Note that, passing to $\Col(i)\cong\BZ$, this equation reads
     \[\ol F+\ol W_i=\BZ\sm \ol S_i,\]
     with dependent elements of $F$ which are of form $\ol\delta<\ol y_i\le\min \ol B$. 
    
     
     
     Let us also note that, since $S_i\subseteq m\BN\cup B$ and therefore $m\BZ\sm S_i$ are concentrated in a single column (i.e. a single congruence class mod $m$) and since $F$ is also concentrated in a single column, $F+W_i=m\BZ\sm S_i$ implies that $W_i$ is also concentrated in a single column, i.e. all elements of $W_i$ are equivalent mod $m$. In fact we know what this congruence class is; $F+W_i=m\BZ\sm S_i$ under projection implies $\wt f+\wt w_i=0$, i.e.\footnote{Here we have written $\wt w$ instead of $\wt w_i$ since they are all the same anyway.}
     \[\wt w=-\wt f.\]
     
     Similarly, as $F$ is finite and $m\BZ\sm S_i$ (as noted earlier) is unbounded below, in order for $F+W_i=m\BZ\sm S_i$ to be true it must be the case that $W_i$ contains infinitely many negative elements. Therefore, $\ol W_i$ must also contain infinitely many negative elements.
     
     For sake of brevity, let
     \[k=\floor*{\frac{n}{\wt f}},\quad r=\rmod_{\wt f}n.\]
     
     \medskip 
     
     \textit{(2) Construction}. To be upfront, we will immediately give the construction\footnote{Here we use the notation $[a,b]=\{a,a+1,a+2,\cdotsc,b-1,b\}$. } of $V$ to which $C$ shall be a MAC. As you can see it is quite a mess, and for that reason we will not be working with all of $V$ all at once, and will instead cut it up into little pieces and consider one at a time.
     \begin{align*}
        V&=[\wt f,2\wt f-1]\cup (W_1+\wt f)\cup (W_2+\wt f+1)\cup\cdots\cup (W_{\wt f}+2\wt f-1)\\
         &\qquad \cup [3\wt f,4\wt f-1]\cup (W_{\wt f+1}+3\wt f)\cup(W_{\wt f+2}+3\wt f+1)\cup\cdots\cup (W_{2\wt f}+4\wt f-1)\\
         &\qquad\ \vdots\\
         &\qquad \cup [(2k-1)\wt f,2k\wt f-1]\cup(W_{(k-1)\wt f+1}+(2k-1)\wt f)\cup\cdots\cup (W_{k\wt f}+2k\wt f-1)\\
         &\qquad \cup [(2k+1)\wt f,(2k+1)\wt f+r-1]\cup (W_{k\wt f+1}+(2k+1)\wt f)\cup (W_{k\wt f+2}+(2k+1)\wt f+1)\cup\cdots\\
         &\qquad\qquad\cup (W_{k\wt f+r}+(2k+1)\wt f+r-1)\\
         &\qquad\cup \bigpr{m\BZ+2k\wt f+r}\cup\bigpr{m\BZ+2k\wt f+r-1}\cup\cdots\cup \bigpr{m\BZ+(2k+1)\wt f-1}\\
         &\qquad \cup \bigpr{m\BZ+(2k+2)\wt f}\cup\cdots\cup\bigpr{m\BZ+m-1},
     \end{align*}
     More concisely and precisely (i.e. collapsing appropriate terms into unions) (here whenever the upper limit is smaller than the lower limit, e.g. $\cup_{j=1}^0$, this is taken to be the empty set by convention; this convention will make our claims true when $r=0$),
     \begin{align*}
         V&=\bigcup_{i=1}^k\pr*{\big[(2i-1)\wt f,2i\wt f-1\big]\cup\bigcup_{j=1}^{\wt f}\big(W_{(i-1)\wt f+j}+(2i-1)\wt f+j-1\big) }\\
         &\qquad \cup \big[(2k+1)\wt f,(2k+1)\wt f+r-1\big]\cup \bigcup_{j=1}^r\big(W_{k\wt f+j}+(2k+1)\wt f+j-1\big)\\
         &\qquad \cup \bigcup_{j=r+1}^{\wt f}\big(m\BZ+2k\wt f+j-1\big)\\
         &\qquad\cup \bigcup_{j=(2k+2)\wt f}^{m-1}\bigpr{m\BZ+j}.
     \end{align*}
      Note that the translation factors $(2i-1)\wt f+j-1$ in the expression $W_{(i-1)\wt f+j}+(2i-1)\wt f+j-1$ are precisely the members of the intervals of integers in $V$.

      Let us denote the shorthand (here $1\le j\le \wt f$)
      \[W_{i,j}\coloneqq W_{(i-1)\wt f+j},\quad S_{i,j}\coloneqq S_{(i-1)\wt f+j}.\]
      Furthermore define ``car blocs'' $V_i$ of $V$ for $1\le i\le k$ as follows:
      \[V_i\coloneqq \big[(2i-1)\wt f,2i\wt f-1\big]\cup\bigcup_{j=1}^{\wt f}\big(W_{(i-1)\wt f+j}+(2i-1)\wt f+j-1\big).\]
      Extending this notation, let us also define for $i=k+1$ the ``remainder bloc''
      \[V_{k+1}\coloneqq \big[(2k+1)\wt f,(2k+1)\wt f+r-1\big]\cup \bigcup_{j=1}^r\big(W_{k\wt f+j}+(2k+1)\wt f+j-1\big)\]
      as well as the ``filler blocs'' 
      \[V_{\BZ}\coloneqq \bigcup_{j=r+1}^{\wt f}\big(m\BZ+2k\wt f+j-1\big)\cup \bigcup_{j=(2k+2)\wt f}^{m-1}\bigpr{m\BZ+j}.\]
      In shape list form these sets are
      \begin{align*}
          V_i&=\big(\ol W_{i,1},(2i-2)\wt f\big)\sqcup\cdots\sqcup\big(\ol W_{i,\wt f},(2i-1)\wt f-1\big)\sqcup\big(\{0\},(2i-1)\wt f\big)\sqcup\cdots\sqcup\big(\{0\},2i\wt f-1\big),\\
          V_{k+1}&=\big(\ol W_{k+1,1},2k\wt f\big)\sqcup\cdots\sqcup\big(\ol W_{k+1,r},2k\wt f+r-1\big)\sqcup \big(\{0\},(2k+1)\wt f\big)\sqcup \cdots\sqcup\big(\{0\},(2k+1)\wt f+r-1\big),\\
          V_\BZ&=\bigpr{\BZ,2k\wt f+r}\sqcup\cdots\sqcup\bigpr{\BZ,(2k+1)\wt f-1}\sqcup \bigpr{\BZ,(2k+2)\wt f}\sqcup\cdots\sqcup \bigpr{\BZ,m-1},
      \end{align*}
      which in a more condensed form is
      \begin{align*}
          V_i&=\bigsqcup_{j=1}^{\wt f}(\ol W_{i,j},(2i-2)\wt f+j-1)\sqcup\bigsqcup_{j=1}^{\wt f}(\{0\},(2i-1)\wt f+j-1),\\
          V_{k+1}&=\bigsqcup_{j=1}^{r}(\ol W_{k+1,j},2k\wt f+j-1)\sqcup \bigsqcup_{j=1}^r(\{0\},(2k+1)+j-1),\\
          V_\BZ&=\bigsqcup_{j=r+1}^{\wt f}(\BZ,2k\wt f+j-1)\sqcup\bigsqcup_{j=(2k+2)\wt f}^{m-1}(\BZ,j).
      \end{align*}
      
      Note wellthat the unions in our definition of $V$ can actually be taken to be disjoint unions (since none of the sets intersect)
      \[V=V_1\sqcup\cdots\sqcup V_k\sqcup V_{k+1}\sqcup V_{\BZ}.\]
      
      The claim then is that $C$ is a MAC of $V$.
      
      \medskip 
      
      \textit{(3) Generatingfunctionology}. Reading from the shape list forms, we may readily see that the formal power series for these blocs are 
      \begin{align*}
          V_i(x)&=q^{\ol W_{i,1}}x^{(2i-2)\wt f}+\cdots+q^{\ol W_{i,\wt f}}x^{(2i-1)\wt f-1}+q^{0}x^{(2i-1)\wt f}+\cdots+q^{0}x^{2i\wt f-1}\\
          &=\sum_{j=1}^{\wt f}\bigpr{q^{\ol W_{i,j}}x^{(2i-2)\wt f+j-1}+x^{(2i-1)\wt f+j-1}};\\
          V_{k+1}(x)&=q^{\ol W_{k+1,1}}x^{2k\wt f}+\cdots+q^{\ol W_{k+1,r}}x^{2k\wt f+r-1}+q^{0}x^{(2k+1)\wt f}+\cdots+q^{0}x^{(2k+1)\wt f+r-1}\\
          &=\sum_{j=1}^r\bigpr{q^{\ol W_{k+1,j}}x^{2k\wt f+j-1}+x^{(2k+1)\wt f+j-1}};\\
          V_\BZ(x)&=q^\BZ x^{2k\wt f+r}+\cdots+q^\BZ x^{(2k+1)\wt f-1}+q^\BZ x^{(2k+2)\wt f}+\cdots+q^\BZ x^{m-1}\\
          &=\sum_{j=r+1}^{\wt f} q^\BZ x^{2k\wt f+j-1}+\sum_{j=(2k+2)\wt f}^{m-1}q^\BZ x^j.
      \end{align*}
      
      Similarly, the shape list form of $C$ is
      \[C=(\BN\cup \ol B,0)\sqcup (\ol F,\wt f),\]
      so that its power series is
      \[C(x)=q^{\BN\cup\ol B}x^0+q^{\ol F}x^{\wt f}.\]
      For short let us write $K\coloneqq m\BN\cup B$, so that
      \[C(x)=q^{\ol K}x^0+q^{\ol F}x^{\wt f}.\]
      
       We can then directly compute each of the $C+V_i$, $C+V_{k+1}$, and $C+V_\BZ$ by computing the power series via multiplication. Indeed,
      \begin{align*}
          (C+V_i)(x)&=C(x)V_i(x)\\
          &=\Bigpr{q^{\ol K}x^0+q^{\ol F}x^{\wt f}}\Bigpr{q^{\ol W_{i,1}}x^{(2i-2)\wt f}+\cdots+q^{\ol W_{i,\wt f}}x^{(2i-1)\wt f-1}+q^{0}x^{(2i-1)\wt f}+\cdots+q^{0}x^{2i\wt f-1}}\\
          &=q^{\ol K}q^{\ol W_{i,1}}x^{(2i-2)\wt f}+\cdots + q^{\ol K}q^{\ol W_{i,\wt f}}x^{(2i-1)\wt f-1}\\
          &\qquad +(q^{\ol K}+q^{\ol F}q^{\ol W_{i,1}})x^{(2i-1)\wt f}+\cdots +(q^{\ol K}+q^{\ol F}q^{\ol W_{i,\wt f}}) x^{2i\wt f-1}\\
          &\qquad\qquad +q^{\ol F}x^{2i\wt f}+\cdots+q^{\ol F}x^{(2i+1)\wt f-1}.
      \end{align*}
      Note that, by shifting the index up by one, we have
      \begin{align*}
           (C+V_{i+1})(x)&=q^{\ol K}q^{\ol W_{i+1,1}}x^{2i\wt f}+\cdots + q^{\ol K}q^{\ol W_{i+1,\wt f}}x^{(2i+1)\wt f-1}\\
          &\qquad +(q^{\ol K}+q^{\ol F}q^{\ol W_{i+1,1}})x^{(2i+1)\wt f}+\cdots +(q^{\ol K}+q^{\ol F}q^{\ol W_{i+1,\wt f}}) x^{(2i+2)\wt f-1}\\
          &\qquad\qquad +q^{\ol F}x^{(2i+2)\wt f}+\cdots+q^{\ol F}x^{(2i+3)\wt f-1},
      \end{align*}
      wherein the first $\wt f$ terms of $(C+V_{i+1})(x)$ will combine with the last $\wt f$ terms of $(C+V_i)(x)$. This is the ``cars fitting together'' we were talking about. 
      
      Similarly, we may compute
      \begin{align*}
          (C+V_{k+1})(x)&=C(x)V_{k+1}(x)\\
          &=\Bigpr{q^{\ol K}x^0+q^{\ol F}x^{\wt f}}\Bigpr{q^{\ol W_{k+1,1}}x^{2k\wt f}+\cdots+q^{\ol W_{k+1,r}}x^{2k\wt f+r-1}+q^{0}x^{(2k+1)\wt f}+\cdots+q^{0}x^{(2k+1)\wt f+r-1}}\\
          &=q^{\ol K}q^{\ol W_{k+1,1}}x^{2k\wt f}+\cdots+ q^{\ol K}q^{\ol W_{k+1,r}} x^{2k\wt f+r-1}\\
          &\qquad +(q^{\ol K}+q^{\ol F}q^{\ol W_{k+1,1}})x^{(2k+1)\wt f}+\cdots+(q^{\ol K}+q^{\ol F}q^{\ol W_{k+1,r}})x^{(2k+1)\wt f+r-1}\\
          &\qquad\qquad +q^{\ol F}x^{(2k+2)\wt f}+\cdots+ q^{\ol F}x^{(2k+2)\wt f+r-1}
      \end{align*}
      and
      \begin{align*}
          (C+V_{\BZ})(x)&=C(x)V_\BZ(x)\\
          &=\Bigpr{q^{\ol K}x^0+q^{\ol F}x^{\wt f}}\Bigpr{q^\BZ x^{2k\wt f+r}+\cdots+q^\BZ x^{(2k+1)\wt f-1}+q^\BZ x^{(2k+2)\wt f}+\cdots+q^\BZ x^{m-1}}\\
          &=q^{\ol K}q^{\BZ}x^{2k\wt f+r}+\cdots+q^{\ol K}q^{\BZ}x^{(2k+1)\wt f-1}\\
          &\qquad +q^{\ol F}q^{\BZ}x^{(2k+1)\wt f+r}+\cdots+q^{\ol F}q^\BZ x^{(2k+2)\wt f-1}\\
          &\qquad\qquad +q^{\ol K}q^\BZ x^{(2k+2)\wt f}+\cdots+ q^{\ol K}q^\BZ x^{m-1}\\
          &\qquad\qquad \qquad +q^{\ol F}q^\BZ x^{(2k+3)\wt f}+\cdots+ q^{\ol F}q^\BZ x^{m-1+\wt f}\\
           &=q^{\ol K}q^{\BZ}x^{2k\wt f+r}+\cdots+q^{\ol K}q^{\BZ}x^{(2k+1)\wt f-1}\\
          &\qquad +q^{\ol F}q^{\BZ}x^{(2k+1)\wt f+r}+\cdots+q^{\ol F}q^\BZ x^{(2k+2)\wt f-1}\\
          &\qquad\qquad +q^{\ol K}q^\BZ x^{(2k+2)\wt f}+\cdots+ q^{\ol K}q^\BZ x^{(2k+3)\wt f-1}\\
          &\qquad\qquad \qquad +(q^{\ol K}q^\BZ+q^{\ol F}q^\BZ) x^{(2k+3)\wt f}+\cdots+ (q^{\ol K}q^\BZ+q^{\ol F}q^\BZ) x^{m-1}\\
          &\qquad\qquad\qquad\qquad+ q^{\ol F}q^\BZ x^m+\cdots+q^{\ol F}q^\BZ x^{m+\wt f-1}\\
          &=q^{\ol K}q^{\BZ}x^{2k\wt f+r}+\cdots+q^{\ol K}q^{\BZ}x^{(2k+1)\wt f-1}\\
          &\qquad +q^{\ol F}q^{\BZ}x^{(2k+1)\wt f+r}+\cdots+q^{\ol F}q^\BZ x^{(2k+2)\wt f-1}\\
          &\qquad\qquad +q^{\ol K}q^\BZ x^{(2k+2)\wt f}+\cdots+ q^{\ol K}q^\BZ x^{(2k+3)\wt f-1}\\
          &\qquad\qquad \qquad +(q^{\ol K}q^\BZ+q^{\ol F}q^\BZ) x^{(2k+3)\wt f}+\cdots+ (q^{\ol K}q^\BZ+q^{\ol F}q^\BZ) x^{m-1}\\
          &\qquad\qquad\qquad\qquad+ q^{\ol F}q^\BZ q^1 x^0+\cdots+q^{\ol F}q^\BZ q^1x^{\wt f-1}.
      \end{align*}
      
      Having calculated the sums of $C$ with each of the blocs, we may now calculate $C+V$. Since $V=V_1\sqcup\cdots \sqcup V_k\sqcup V_{k+1}\sqcup V_\BZ$, we have
      \[(C+V)(x)=(C+V_1)(x)+\cdots+(C+V_k)(x)+(C+V_{k+1})(x)+(C+V_{\BZ})(x),\]
      so that adding up the above results we get
      \begin{align*}
          (C+V)(x)&=q^{\ol K}q^{\ol W_{1,1}}x^0+\cdots+q^{\ol K}q^{\ol W_{1,\wt f}}x^{\wt f-1}\\
          &\quad +\sum_{i=1}^{k-1}\bigg((q^{\ol K}+q^{\ol F}q^{\ol W_{i,1}})x^{(2i-1)\wt f}+\cdots +(q^{\ol K}+q^{\ol F}q^{\ol W_{i,\wt f}}) x^{2i\wt f-1}\\
          &\qquad\qquad\quad +(q^{\ol F}+q^{\ol K}q^{\ol W_{i+1,1}})x^{2i\wt f}+\cdots+(q^{\ol F}+q^{\ol K}q^{\ol W_{i+1,\wt f}})x^{(2i+1)\wt f-1}\bigg)\\
          &\quad +(q^{\ol K}+q^{\ol F}q^{\ol W_{k,1}})x^{(2k-1)\wt f}+\cdots +(q^{\ol K}+q^{\ol F}q^{\ol W_{k,\wt f}}) x^{2k\wt f-1}\\
          &\qquad\quad +(q^{\ol F}+q^{\ol K}q^{\ol W_{k+1,1}})x^{2k\wt f}+\cdots+(q^{\ol F}+q^{\ol K}q^{\ol W_{k+1,r}})x^{2k\wt f+r-1}\\
          &\qquad\quad+(q^{\ol F}+q^{\ol K}q^{\BZ} )x^{2k\wt f+r}+\cdots+(q^{\ol F}+q^{\ol K}q^{\BZ} )x^{(2k+1)\wt f-1}\\
          &\quad +(q^{\ol K}+q^{\ol F}q^{\ol W_{k+1,1}})x^{(2k+1)\wt f}+\cdots+(q^{\ol K}+q^{\ol F}q^{\ol W_{k+1,r}})x^{(2k+1)\wt f+r-1}\\
          &\qquad\quad +q^{\ol F}q^{\BZ}x^{(2k+1)\wt f+r}+\cdots+q^{\ol F}q^\BZ x^{(2k+2)\wt f-1}\\
          &\qquad\quad +q^{\ol K}q^\BZ x^{(2k+2)\wt f}+\cdots+ q^{\ol K}q^\BZ x^{(2k+3)\wt f-1}\\
          &\qquad\quad +(q^{\ol K}q^\BZ+q^{\ol F}q^\BZ) x^{(2k+3)\wt f}+\cdots+ (q^{\ol K}q^\BZ+q^{\ol F}q^\BZ) x^{m-1}\\
          &\qquad\quad+ q^{\ol F}q^\BZ q^1 x^0+\cdots+q^{\ol F}q^\BZ q^1x^{\wt f-1}\\
          &=q^{(\ol K+\ol W_{1,1})\sqcup(\ol F+\BZ+1)}x^0+\cdots+q^{(\ol K+\ol W_{1,\wt f})\sqcup(\ol F+\BZ+1)}x^{\wt f-1}\\
          &\quad +\sum_{i=1}^{k-1}\bigg(q^{\ol K\sqcup (\ol F+\ol W_{i,1})}x^{(2i-1)\wt f}+\cdots +q^{\ol K\sqcup (\ol F+\ol W_{i,\wt f})} x^{2i\wt f-1}\\
          &\qquad\qquad\quad +q^{\ol F\sqcup (\ol K+\ol W_{i+1,1})}x^{2i\wt f}+\cdots+q^{\ol F\sqcup (\ol K+\ol W_{i+1,\wt f})}x^{(2i+1)\wt f-1}\bigg)\\
          &\quad +q^{\ol K\sqcup (\ol F+\ol W_{k,1})}x^{(2k-1)\wt f}+\cdots +q^{\ol K\sqcup (\ol F+\ol W_{k,\wt f})} x^{2k\wt f-1}\\
          &\qquad\quad +q^{\ol F\sqcup (\ol K+\ol W_{k+1,1})}x^{2k\wt f}+\cdots+q^{\ol F\sqcup (\ol K+\ol W_{k+1,r})}x^{2k\wt f+r-1}\\
          &\qquad\quad+q^{\ol F\sqcup (\ol K+\BZ) }x^{2k\wt f+r}+\cdots+q^{\ol F\sqcup (\ol K+\BZ) }x^{(2k+1)\wt f-1}\\
          &\quad +q^{\ol K\sqcup(\ol F+\ol W_{k+1,1})}x^{(2k+1)\wt f}+\cdots+q^{\ol K\sqcup(\ol F+\ol W_{k+1,r})}x^{(2k+1)\wt f+r-1}\\
          &\qquad\quad +q^{\ol F+\BZ}x^{(2k+1)\wt f+r}+\cdots+q^{\ol F+\BZ} x^{(2k+2)\wt f-1}\\
          &\qquad\quad +q^{\ol K+\BZ} x^{(2k+2)\wt f}+\cdots+ q^{\ol K+\BZ} x^{(2k+3)\wt f-1}\\
          &\qquad\quad +q^{(\ol K+\BZ)\sqcup(\ol F+\BZ)} x^{(2k+3)\wt f}+\cdots+ q^{(\ol K+\BZ)\sqcup(\ol F+\BZ)} x^{m-1}
      \end{align*}
      
      It should be well noted that the purpose of the bound
      \[m\ge (2k+1)\wt f+r\]
      is to ensure that this expression has all like terms combined; in particular, it ensures that all terms of form
      \[q^{\ol K\sqcup(\ol F+\ol W_{i,j})}x^{(2i-1)\wt f+j-1}\]
      have exponents lying in the range $[0,m-1]$. 
      
      \medskip
      
      \textit{(4) Deciphering}. Having computed $(C+V)(x)$, to check that $C$ is an additive complement to $V$, i.e. $C+V=\BZ$, it suffices to check that every coefficient $[x^i](C+V)(x)$ is of form $q^A$ where $A\supseteq\BZ$, i.e. $q^\BZ\le [x^i](C+V)(x)$ for all $i$. We can verify this by considering casework depending on the range of the exponent:
      \begin{enumerate}[(i)]
          \item $x^0$ to $x^{\wt f-1}$. Coefficients in this range are of form $q^{(\ol K+\ol W_i)\sqcup(\ol F+\BZ+1)}$;
          since $\BZ\subseteq\ol F+\BZ+1$, it is clear that 
          \[q^{\BZ}\le q^{(\ol K+\ol W_i)\sqcup(\ol F+\BZ+1)}.\]
          \item $x^{(2i-1)\wt f}$ to $x^{(2i+1)\wt f-1}$ for $1\le i\le k-1$. Coefficients in this range are either of form 
         $q^{\ol K\sqcup(\ol F+\ol W_i)}$ or $q^{\ol F\sqcup (\ol K+\ol W_i)}$.
          In the latter case, since $\BN\subseteq\ol K$ and since $\ol W_i$ contains infinitely many negative elements, we have $\BZ\subseteq \ol K+\ol W_i$ and therefore 
          \[q^{\BZ}\le q^{\ol F\sqcup (\ol K+\ol W_i)}.\]
          In the former case, recall $\ol F+\ol W_i=\BZ\sm\ol S_i$, and since $\ol S_i\subseteq\ol K$, we have $\BZ\subseteq \ol K\sqcup(\ol F+\ol W_i)$, i.e. 
          \[q^\BZ\le q^{\ol K\sqcup(\ol F+\ol W_i)}\]
          as well.
          \item $x^{(2k-1)\wt f}$ to $x^{2k\wt f+r-1}$. Coefficients in this range are of the same form as (ii), so we are done here for the same reasons as in (ii).
          \item $x^{2k\wt f+r}$ to $x^{(2k+1)\wt f-1}$. Coefficients in this range are of form $q^{\ol F\sqcup(\ol K+\BZ)}$. Since clearly $\BZ\subseteq\ol K+\BZ$, we have
          \[q^\BZ\le q^{\ol F\sqcup(\ol K+\BZ)}.\]
          \item $x^{(2k+1)\wt f}$ to $x^{(2k+1)\wt f+r-1}$. Coefficients in this range are covered by (ii) also.
          \item $x^{(2k+1)\wt f+r}$ to $x^{m-1}$. Coefficients in this range are either of form $q^{\ol F+\BZ}$, $q^{\ol K+\BZ}$, or $q^{\ol F+\BZ}+q^{\ol K+\BZ}$. It is clear that $\BZ\subseteq \ol F+\BZ,\ol K+\BZ$, so that
          \[q^\BZ\le q^{\ol F+\BZ},q^{\ol K+\BZ},q^{\ol F+\BZ}+q^{\ol K+\BZ}.\]
      \end{enumerate}
      This concludes the check that $C+V=\BZ$.
      
      Next let us see that all elements of $C$ have dependent elements in $\BZ$ with respect to $V$. In fact, we claim that every column labeled by $(2i-1)\wt f+j-1$ (where $1\le j\le \wt f$ and $(i-1)\wt f+j\le n$) contains dependent elements of every element of $S_{i,j}$ and every element of $F$. Indeed, the coefficients $[x^{(2i-1)\wt f+j-1}](C+V)(x)$ are of form
      \[[x^{(2i-1)\wt f+j-1}](C+V)(x)=q^{\ol K\sqcup(\ol F+\ol W_{i,j})};\]
      by construction we have $\ol F+\ol W_{i,j}=\BZ\sm \ol S_{i,j}$ in such a way that there are dependent elements of every element of $\ol F$ of form $\ol\delta<\ol y_{i,j}\le \min \ol B$, so that in particular $\ol\delta\not\in\ol K$. Then, in $\ol K\sqcup (\ol F+\ol W_{i,j})$, each such $\ol\delta$ is counted/covered exactly once, so that they are still dependent elements of $\ol F$ in the equation 
      \[\ol K\sqcup (\ol F+\ol W_{i,j})\supseteq \BZ,\]
      which gives dependents of all of $F$. Similarly, $\ol S_{i,j}$ is avoided by $\ol F+\ol W_{i,j}$ (which is equal to $\BZ\sm \ol S_{i,j}$ actually, namely everyone but $S_{i,j}$), and is covered exactly once in $K$, so that we have the dependent elements of $S_{i,j}$ also. Taking the union over all appropriate $i,j$, this gives the dependent elements of all $\bigcup_{i,j}S_{i,j}=m\BN\cup B$, so that all elements of $C$ have dependents in the equation 
      \[C+V=\BZ.\]
      This concludes the check that $C$ is minimal with respect to $C+V=\BZ$.
      
      Lastly we should remark on the use of the bound 
      \[m\ge (2k+1)\wt f+r.\]
      Indeed, if this were to fail, i.e. if $m<(2k+1)\wt f+r$, then the term in $(C+V)(x)$
      \[q^{\ol K\sqcup(\ol F+\ol W_{k+1,r})}x^{(2k+1)\wt f+r-1}\]
      would be reduced in the quotient which defines $\Xi_m$ to $q^{\bigpr{\ol K\sqcup(\ol F+\ol W_{k+1,r})}+1}x^{(2k+1)\wt f+r-1-m}$, which would then ``collide'' (i.e. combine like terms) with a term from earlier; in this case, we can no longer guarantee that $\ol S_{i,j}\subseteq\BZ$ is covered only once in the coefficient $[x^{(2k+1)\wt f+r-1-m}](C+V)(x)$. 
\end{proof}

\subsection{Proving Theorem 7}
We next prove Theorem 7. The idea is similar to Theorem 4, where we put ``passengers'' (i.e. the columns with dependents of $C$) into ``cars'', which we put in a large ``garage'' (i.e. $Z_m$), and somehow if the garage is long enough then all the cars will fit and none of the passengers will overlap. 

In Theorem 4, because the problem conditions are specific (e.g. only one column of $F$ and $B$), each car can fit many passengers inside. However, because the distribution of points in $C$ is not known in the setting of Theorem 7, we will only be able to put one passenger in each car this time.

\begin{proof}[Proof of Theorem 7]
     This theorem is stated in the setting of $\BZ^2$ projecting to $Z_m$. However, in proving the theorem we will work directly in $Z_m$, taking the assumptions that there is some set cover $\cal S_i$ of the infinite columns $K_i\subseteq C\subseteq Z_m$ 
     \[S_{i,1}\cup\cdots\cup S_{i,n_i}=K_i\]
     such that, for each $i,j$, there are (possibly empty) $W_{i,j;\mu},U_{i,j;\nu}\subseteq Z_m$ such that
     \[\Col(K_i)\sm S_{i,j}=\bigcup_{\mu=1}^t (F_\mu +W_{i,j;\mu})\cup\bigcup_{\nu=1}^r(K_\nu+U_{i,j;\nu}),\]
     and that
     \[m\ge(\ell+1)\pr*{|F_/|+\sum_{i=1}^r |\cal S_i|},\]
     i.e.
     \[m\ge(\ell+1)\pr*{t+\sum_{i=1}^r n_i}\vrot{\coloneqq}{180} (\ell+1)N.\]
     
     Each $F_i$ or $K_j$ will reside in the column labeled by $\wt f_i$ or $\wt k_j$ respectively, and we may sometimes write $\wt c_i$ for the label of the column containing $C_i$, the $i$-th column of $C$ counting from the left for $1\le i\le t+r$. 

     As before, the proof will be separated into italicized sections for ease of reading.
     
     \medskip  
     
     \textit{(1) Observations}. As per the definition of $\ell$, let\footnote{This notation is provocatively chosen.} $Q_/$ be a set such that
     \[C_/+Q_/=[0,\ell-1].\]
     The discussion immediately following the statement of Theorem 7 shows that such a set exists with the given upper bound $\ell\le\te{outerrange}(\wh C_/)+\te{innerrange}(\wh C_/)$. Then, define
     \[Q(x)\coloneqq \sum_{\wt q\in Q_/}q^\BZ x^{\wt q}.\]
     Note well that the set $Q$ corresponding to this power series satisfies
     \[(C+Q)(x)= q^{P_0} x^0+\cdots+q^{P_{\ell-1}} x^{\ell-1}\ge q^{\BZ} x^0+\cdots+q^{\BZ} x^{\ell-1},\]
     where each $P_i\ge\BZ$. 
     
     Since the columns $F_1,\cdotsc,F_t$ are finite sets, by the theorem of Kwon we know that there exists\footnote{We apologize in advance for the overload of notation for the letter $W$ here, but remember that $W_i$ with a single subscript is the additive complement of a $F_i$, while $W_{i,j;\mu}$ with three subscripts is the set involved in the theorem assumptions.} $W_i\subseteq\BZ$ such that
     \[F_i+W_i=\BZ\]
     and $F_i$ is minimal with respect to this equation. 
     
     Consider the ordered list of symbols (which each stand for one of our sets)
     \[\big\{F_1,\cdotsc,F_t,S_{1,1},\cdotsc,S_{1,n_1},\cdotsc,S_{r,1},\cdotsc,S_{r,n_r}\big\};\]
     for a symbol $S$ in this list, let $\alpha(S)$ be the index in this list where $S$ appears. For example, $\alpha(F_1)=1$, $\alpha(F_t)=t$, and $\alpha(S_{r,n_r})=t+\sum_i n_i$. We will use the shorthand
     \[\alpha_{i,j}\coloneqq \alpha(i,j)\coloneqq \alpha(S_{i,j})=\te{the index where $S_{i,j}$ appears}.\]
     In line with this notation, we will define\footnote{We apologize in advance for overloading notation, but the way to distinguish these $S_i$ and the members of the set cover $S_{i,j}\in \cal S_i$ is that the former has only one subscript while the latter has two.} $S_i$ for $1\le i\le t+\sum_j n_j=N$ to be
     \[\big\{S_1,\cdotsc,S_N\big\}\coloneqq\big\{F_1,\cdotsc,F_t,S_{1,1},\cdotsc,S_{1,n_1},\cdotsc,S_{r,1},\cdotsc,S_{r,n_r}\big\},\]
     so that $S_{\alpha(i,j)}=S_{i,j}$.
     
     \medskip
     
     \textit{(2) Construction}. First we give the construction of the set $V$ to which $C$ shall be a MAC. Rather than give the explicit set construction, we will give the power series $V(x)$, which as remarked earlier determines the set $V$. Since $V$ shall be quite unwieldy, we will again break it up into ``blocs''. 
     
     
     
    
    The ``car blocs'' $V_i$ ($1\le i\le t+\sum_j n_j=N$) are defined by formal power series which we give here. For $1\le i\le t$, define
     \[V_i(x)\coloneqq x^{(i-1)(\ell+1)}\Bigpr{q^{\ol W_i}x^{\ell-\wt f_i}+Q(x)},\]
    and for $t+1\le \alpha(i,j)\le N$ let us define      
    \[V_{\alpha(i,j)}(x)\coloneqq x^{(\alpha(i,j)-1)(\ell+1)}\pr*{q^0 x^{\ell-\wt k_i}+\sum_{\mu=1}^t q^{\ol W_{i,j;\mu}}x^{\ell-\wt f_\mu}+\sum_{\nu=1}^r q^{\ol U_{i,j;\nu}}x^{\ell-\wt k_\nu}+Q(x)}.\]
    Note well that some of these terms in $\sum_{\mu=1}^t q^{\ol W_{i,j;\mu}}x^{\ell-\wt f_\mu}+\sum_{\nu=1}^r q^{\ol U_{i,j;\nu}}x^{\ell-\wt k_\nu}$ could be zero, for example if $\ol W_{i,j;\mu}=\emptyset$ then $q^{\ol W_{i,j;\mu}}=0$. 
     
    Similarly to last time, the ``filler blocs'' will be defined by the power series 
     \[V_\BZ(x)\coloneqq (x^{N(\ell+1)}+\cdots+x^{m-1})Q(x).\]
     In the case that $m=N(\ell+1)$, this power series is defined to be zero. 
     
     Then we shall define $V$ to be
     \[V\coloneqq V_1\sqcup \cdots\sqcup V_N\sqcup V_\BZ,\]
     whose power series shall thus be the sum of those written above.
     
     The claim then is that $C+V=\BZ$, and that $C$ is minimal with respect to this condition.
     
     \medskip 
     
     \textit{(3) Generatingfunctionology}. We will compute the power series $(C+V)(x)$ by computing $(C+V_i)(x)$ and $(C+V_\BZ)(x)$ and then adding them together.
     
     Note that in this setup, the power series of $C=F_1\sqcup\cdots\sqcup F_t\sqcup K_1\sqcup\cdots\sqcup K_r$ is given by
     \[C(x)=q^{\ol F_1}x^{\wt f_1}+\cdots+ q^{\ol F_t}x^{\wt f_t}+ q^{\ol K_1}x^{\wt k_1}+\cdots+ q^{\ol K_r}x^{\wt k_r}.\]
     
     Let us then compute the sums $C+V_i$ and $C+V_\BZ$ by computing the formal power series thereof. For $1\le i\le t$, compute\nc{\Err}{\on{Err}}\nc{\Dep}{\on{Dep}}\nc{\Fil}{\on{Fil}}
     \begin{align*}
         (C+V_i)(x)&=C(x)V_i(x)\\
         &=\bigpr{q^{\ol F_i}x^{\wt f_i}+(C\sm F_i)(x)}x^{(i-1)(\ell+1)}\Bigpr{q^{\ol W_i}x^{\ell-\wt f_i}+Q(x)}\\
         &=q^{\ol F_i}q^{\ol W_i}x^{i\ell+i-1}+(C\sm F_i)(x)q^{\ol W_i}x^{i\ell+i-1-\wt f_i}+x^{(i-1)(\ell+1)}(C+Q)(x)\\
         &=q^{P_0}x^{(i-1)\ell+i-1}+\cdots+q^{P_{\ell-1}}x^{i\ell +i-2}\\
         &\qquad +q^{\ol F_i}q^{\ol W_i}x^{i\ell+i-1}\\
         &\qquad\qquad+(C\sm F_i)(x)q^{\ol W_i}x^{i\ell+i-1-\wt f_i}\\
         &\vrot{\coloneqq}{180}\Fil_i(x) \\
         &\qquad +\Dep_i(x)\\
         &\qquad\qquad+\Err_i(x),
     \end{align*}
     where we have defined (Fil standing for Filler, Dep standing for Dependents, and Err standing for Error)
     \begin{align*}
         \Fil_i(x)&\coloneqq q^{P_0}x^{(i-1)\ell+i-1}+\cdots+q^{P_{\ell-1}}x^{i\ell +i-2},\\
         \Dep_i(x)&\coloneqq q^{\ol F_i}q^{\ol W_i}x^{i\ell+i-1},\\
         \Err_i(x)&\coloneqq (C\sm F_i)(x)q^{\ol W_i}x^{i\ell+i-1-\wt f_i}.
     \end{align*}
     Note well that the exponents of $x$ appearing in $\Fil_i(x)$ are precisely
     \[\Rng\Fil_i(x)=\big[(i-1)\ell+i-1,i\ell+i-2\big],\]
     while the exponent appearing in $\Dep_i(x)$ is
     \[\Rng\Dep_i(x)=\{i\ell+i-1\}.\]
     
     Recall that $P_i\supseteq\BZ$, and note well that all terms in the last summand $\Err_i(x)$ are constant multiples of
     \[x^{i\ell+i-1-\wt f_i+\wt f_j} \quad\te{or}\quad x^{i\ell+i-1-\wt f_i+\wt k_j}\]
     for $j\neq i$, so that in particular
     \[[x^{i\ell+i-1}]{\Err_i(x)}=0.\]
     Also note well that the largest power of $x$ appearing in $\Err_i(x)$ is $x^{i\ell+i-1-\wt f_i+\max C_/}$, which satisfies
     \[{i\ell+i-1-\wt f_i+\max C_/}\le (i+1)\ell+i-1\]
     since $\ell> \max C_/-\min C_/$, and the smallest power of $x$ appearing in $\Err_i(x)$ is $x^{i\ell+i-1-\wt f_i+\min C_/}$, which satisfies
     \[i\ell+i-1-\wt f_i+\min C_/\ge(i-1)\ell+i-1\]
     since $\wt f_i\le \max C_/$ and $\ell>\max C_/-\min C_/$. These two inequalities ensure that when we add $(C+V_i)(x)+(C+V_{i+1})(x)$, the summand $\Err_i(x)$ from $C+V_i$ will combine with the terms $\Fil_i(x)$ and $\Fil_{i+1}(x)$ and will not collide\footnote{I.e., will not ``collide with the passenger from $C+V_{i+1}$''.} with the term $\Dep_{i+1}(x)=q^{\ol F_{i+1}}q^{\ol W_{i+1}}x^{(i+1)\ell+i}$ from $C+V_{i+1}$.

     Similarly, for $t+1\le \alpha(i,j)\le N$, 
     \begin{align*}
         (C+V_{\alpha(i,j)})(x)&=C(x)V_{\alpha(i,j)}(x)\\
         &=C(x)x^{(\alpha_{i,j}-1)(\ell+1)}\pr*{q^0 x^{\ell-\wt k_i}+\sum_{\mu=1}^t q^{\ol W_{i,j;\mu}}x^{\ell-\wt f_\mu}+\sum_{\nu=1}^r q^{\ol U_{i,j;\nu}}x^{\ell-\wt k_\nu}+Q(x)}\\
         &=\bigpr{q^{\ol K_i}x^{\wt k_i}+(C\sm K_i)(x)}x^{(\alpha_{i,j}-1)(\ell+1)}x^{\ell-\wt k_i}\\
         &\qquad +\sum_{\mu=1}^t \bigpr{q^{\ol F_\mu}x^{\wt f_\mu}+(C\sm F_\mu)(x)}x^{(\alpha_{i,j}-1)(\ell+1)} q^{\ol W_{i,j;\mu}}x^{\ell-\wt f_\mu}\\
         &\qquad +\sum_{\nu=1}^r \bigpr{q^{\ol K_\nu}x^{\wt k_\nu}+(C\sm K_\nu)(x)}x^{(\alpha_{i,j}-1)(\ell+1)} q^{\ol U_{i,j;\nu}}x^{\ell-\wt k_\nu}\\
         &\qquad\qquad+ x^{(\alpha_{i,j}-1)(\ell+1)}(C+Q)(x)\\
         &=q^{P_0}x^{(\alpha_{i,j}-1)\ell+\alpha_{i,j}-1}+\cdots+q^{P_{\ell-1}}x^{\alpha_{i,j}\ell+\alpha_{i,j}-2}\\
         &\qquad +\pr*{q^{\ol K_i}+\sum_{\mu=1}^t q^{\ol F_\mu}q^{\ol W_{i,j;\mu}}+\sum_{\nu=1}^r q^{\ol K_\nu}q^{\ol U_{i,j;\nu}}}x^{\alpha_{i,j}\ell+\alpha_{i,j}-1}\\
         &\qquad\qquad+ (C\sm K_i)(x)x^{\alpha_{i,j}\ell+\alpha_{i,j}-1-\wt k_i}\\
         &\qquad\qquad+\sum_{\mu=1}^t (C\sm F_\mu)(x)q^{\ol W_{i,j;\mu}}x^{\alpha_{i,j}\ell+\alpha_{i,j}-1-\wt f_\mu}\\
         &\qquad\qquad +\sum_{\nu=1}^r (C\sm K_\nu)(x)q^{\ol U_{i,j;\nu}}x^{\alpha_{i,j}\ell+\alpha_{i,j}-1-\wt k_\nu}\\
         &\vrot{\coloneqq}{180} \Fil_{\alpha(i,j)}(x)\\
         &\qquad +\Dep_{\alpha(i,j)}(x)\\
         &\qquad\qquad+ \Err_{\alpha(i,j)}(x),
     \end{align*}
     where we have defined
     \begin{align*}
         \Fil_{\alpha(i,j)}(x)&\coloneqq  q^{P_0}x^{(\alpha_{i,j}-1)\ell+\alpha_{i,j}-1}+\cdots+q^{P_{\ell-1}}x^{\alpha_{i,j}\ell+\alpha_{i,j}-2},\\
         \Dep_{\alpha(i,j)}(x)&\coloneqq \pr*{q^{\ol K_i}+\sum_{\mu=1}^t q^{\ol F_\mu}q^{\ol W_{i,j;\mu}}+\sum_{\nu=1}^r q^{\ol K_\nu}q^{\ol U_{i,j;\nu}}}x^{\alpha_{i,j}\ell+\alpha_{i,j}-1},\\
         \Err_{\alpha(i,j)}(x)&\coloneqq  (C\sm K_i)(x)x^{\alpha_{i,j}\ell+\alpha_{i,j}-1-\wt k_i}+\sum_{\mu=1}^t (C\sm F_\mu)(x)q^{\ol W_{i,j;\mu}}x^{\alpha_{i,j}\ell+\alpha_{i,j}-1-\wt f_\mu} \\
         &\qquad\qquad\qquad\hspace{10em}+\sum_{\nu=1}^r (C\sm K_\nu)(x)q^{\ol U_{i,j;\nu}}x^{\alpha_{i,j}\ell+\alpha_{i,j}-1-\wt k_\nu}.
     \end{align*}
     Again note well that the exponents appearing in $\Fil_{\alpha(i,j)}(x)$ are 
     \[\Rng\Fil_{\alpha(i,j)}(x)=\big[ (\alpha_{i,j}-1)\ell+\alpha_{i,j}-1,\alpha_{i,j}\ell+\alpha_{i,j}-2\big],\]
     while the exponent appearing in $\Dep_{\alpha(i,j)}(x)$ is
     \[\Rng\Dep_{\alpha(i,j)}(x)=\{\alpha_{i,j}\ell+\alpha_{i,j}-1\}.\]
     
     As before we remark that the coefficients in $\Fil_{\alpha(i,j)}(x)$ have $P_\mu\supseteq \BZ$, and we must note well that all the terms in the last summand $\Err_{\alpha(i,j)}(x)$ are multiples of 
     \[x^{\alpha_{i,j}\ell+\alpha_{i,j}-1-\wt c_\mu+\wt c_\nu},\]
     where for example $\wt c_\mu=\wt k_i$ for the first summand  in $\Err_{\alpha(i,j)}(x)$ and $\wt c_\nu\neq \wt c_\mu$. In particular this means
     \[[x^{\alpha_{i,j}\ell+\alpha_{i,j}-1}]\Err_{\alpha(i,j)}(x)=0.\]
     Also note well that the largest power of $x$ appearing in $\Err_{\alpha(i,j)}(x)$ is at most (in the sense of comparing the exponents) $x^{\alpha_{i,j}\ell+\alpha_{i,j}-1-\min C_/+\max C_/}$, which satisfies
     \[\alpha_{i,j}\ell+\alpha_{i,j}-1-\min C_/+\max C_/\le (\alpha_{i,j}+1)\ell+\alpha_{i,j}-1\]
     since $\ell>\max C_/-\min C_/$. Moreover the smallest power of $x$ appearing in $\Err_{\alpha(i,j)}(x)$ is at least
     \[\alpha_{i,j}\ell+\alpha_{i,j}-1-\max C_/+\min C_/\ge (\alpha_{i,j}-1)\ell+\alpha_{i,j}-1\]
     for the same reason. These two inequalities ensure that when we consider the sum $(C+V_{\alpha(i,j)})(x)+(C+V_{\alpha(i,j)+1})(x)$, the summand $\Err_{\alpha(i,j)}(x)$ from $(C+V_{\alpha(i,j)})(x)$ will combine with the terms $\Fil_{\alpha(i,j)}(x)$ and $\Fil_{\alpha(i,j)+1}(x)$ and will not collide (i.e., after using the relation $x^m=q^1$ so that all exponents are in the range $[0,m-1]$, the set of exponents appearing in the former is disjoint from the set of exponents appearing in the latter) with the term $\Dep_{\alpha(i,j)+1}(x)$.
     
     Lastly let us compute
     \begin{align*}
         (C+V_\BZ)(x)&=C(x)V_\BZ(x)\\
         &=C(x)Q(x)(x^{N(\ell+1)}+\cdots+x^{m-1})\\
         &=\bigpr{q^{P_0}x^0+\cdots+q^{P_{\ell-1}}x^{\ell-1}}\bigpr{x^{N(\ell+1)}+\cdots+x^{m-1}}\\
         &=q^{P_{N(\ell+1)}}x^{N\ell+N}+\cdots+q^{P_{m+\ell-2}}x^{m+\ell-2},
     \end{align*}
     where we the sets $P_{N(\ell+1)},\cdotsc,P_{m+\ell-2}$ are defined such that the last equality holds; i.e., these new\footnote{Note that the subscripts here do not overlap with $P_0,\cdotsc,P_{\ell-1}$, so there's no overlap of notation here.} $P_{N(\ell+1)},\cdotsc,P_{m+\ell-2}$ are obtained by distributing\footnote{Also called ``FOILing''.}. Also note well that, since $P_i\supseteq\BZ$ for $i\in [0,\ell-1]$, we also know $P_i\supseteq \BZ$ for $i\in[N(\ell+1),m+\ell-2]$.
     
     Then, adding everything together, we have
     \begin{align*}
         (C+V)(x)&=(C+V_1)(x)+\cdots+(C+V_N)(x)+(C+V_\BZ)(x)\\
         &=\Fil_1(x)+\Dep_1(x)\\
         &\quad +\sum_{i=2}^{N}\pr*{\bigpr{\Err_{i-1}(x)+\Fil_{i}(x)}+\Dep_{i}(x)}\\
         &\quad +\Err_N(x)+q^{P_{N(\ell+1)}}x^{N\ell+N}+\cdots+q^{P_{m+\ell-2}}x^{m+\ell-2}\\
         &=\big(q^{P_{m}+1}x^0+\cdots+ q^{P_{m+\ell-2}+1}x^{\ell-2}+\Fil_1(x)\big)+\Dep_1(x)\\
         &\quad +\sum_{i=2}^{N}\pr*{\bigpr{\Err_{i-1}(x)+\Fil_{i}(x)}+\Dep_{i}(x)}\\
         &\quad +\Err_N(x)+q^{P_{N(\ell+1)}}x^{N\ell+N}+\cdots+q^{P_{m-1}}x^{m-1}.
     \end{align*}
     Let us decipher what this means now.
     
     \medskip
     
     \textit{(4) Deciphering}. 
     Note that, for $1\le \alpha\le N$, 
     \begin{align*}
         \Rng\Fil_\alpha(x)&=\big[(\alpha-1)\ell+\alpha-1,\alpha\ell+\alpha-2\big],\\
         \Fil_\alpha(x)&\ge q^\BZ x^{(\alpha-1)\ell+\alpha-1}+\cdots+q^{\BZ} x^{\alpha\ell+\alpha-2},\\
         \Rng\Err_\alpha(x)&\subseteq \Rng\Fil_\alpha(x)\cup \Rng\Fil_{\alpha+1}(x),\\
         \Rng\Dep_\alpha(x)&=\alpha\ell+\alpha-1.
     \end{align*}
     
     If 
     \[m\ge N\ell+N,\]
     then $\Dep_N(x)$ (and therefore all $\Dep_\alpha(x)$ for $\alpha<N$) will not collide, in the quotient defining $\Xi_m$, with the earlier terms in the sum 
     \begin{align*}
         (C+V)(x)&=\big(q^{P_{m}+1}x^0+\cdots+ q^{P_{m+\ell-2}+1}x^{\ell-2}+\Fil_1(x)\big)+\Dep_1(x)\\
         &\quad +\sum_{i=2}^{N}\pr*{\bigpr{\Err_{i-1}(x)+\Fil_{i}(x)}+\Dep_{i}(x)}\\
         &\quad +\Err_N(x)+q^{P_{N(\ell+1)}}x^{N\ell+N}+\cdots+q^{P_{m-1}}x^{m-1}.
     \end{align*}
     That is, the inequality $m\ge N\ell+N$ guarantees that
     \[\Rng\Dep_\alpha(x)\cap \Rng\pr*{(C+V)(x)-\sum_\beta \Dep_\beta(x)}=\emptyset,\]
     i.e. that the only place $x^{\alpha\ell+\alpha-1}$ appears with nonzero coefficient is in $\Dep_\alpha(x)$. 
     
     Let us first check that $C+V=\BZ$. Because of the presence of the $\Fil_\alpha(x)$, we have 
     \[[x^j](C+V)(x)\ge q^\BZ\qquad\foralls j\in \big[(\alpha-1)\ell+\alpha-1,\alpha\ell+\alpha-2\big].\]
     
     Note that the intervals $\big[(\alpha-1)\ell+\alpha-1,\alpha\ell+\alpha-2\big]$ and $\big[\alpha\ell+\alpha,(\alpha+1)\ell+\alpha-1\big]$ are separated by $\alpha\ell+\alpha-1$, and this coefficient we can readily see is
     \[[x^{\alpha\ell+\alpha-1}](C+V)(x)=\begin{cases} q^{\ol F_i}q^{\ol W_i}=q^{\ol F_i+\ol W_i}\ge q^\BZ & \hspace*{-8em}\te{if }\alpha=i\le t\\\beau{q^{\ol K_i}+\sum_{\mu=1}^t q^{\ol F_\mu}q^{\ol W_{i,j;\mu}}+\sum_{\nu=1}^r q^{\ol K_\nu}q^{\ol U_{i,j;\nu}}}=q^{\ol K_i\sqcup \bigsqcup_\mu (\ol F_\mu+\ol W_{i,j;\mu})\sqcup\bigsqcup_\nu (\ol K_\nu+\ol U_{i,j;\nu})}\ge q^\BZ & \te{else}\end{cases},\]
     where in the second case\footnote{The coefficient in the second case is at least $q^\BZ$ since $\bigsqcup_\mu (\ol F_\mu+\ol W_{i,j;\mu})\sqcup\bigsqcup_\nu (\ol K_\nu+\ol U_{i,j;\nu})$ covers everything except $\ol S_{i,j}$, and $\ol K_i$ covers $\ol S_{i,j}$.} we have $\alpha=\alpha(i,j)>t$. Hence the coefficient in front of $x$ raised to any power in the range $[0,N\ell+N-1]$ will be greater than or equal to $q^\BZ$. 
     
     Lastly, note that for $N\ell+N\le\mu\le m-1$ we have
     \[[x^\mu](C+V)(x)=q^{P_{\mu}}\ge q^\BZ.\]
     
     This concludes the check that
     \[(C+V)(x)\ge q^\BZ x^0+\cdots+ q^\BZ x^{m-1},\]
     i.e. that $C+V=\BZ$.
     
     Next let us check that $C$ is minimal with respect to $C+V=\BZ$. To do this we will see that all elements of $C$ have dependents in $\BZ$. In fact, we claim that there are dependents of all of $F_i$ in the $(i\ell+i-1)$-th column, and that there are dependents of all of $S_{i,j}$ in the $(\alpha_{i,j}\ell+\alpha_{i,j}-1)$-th column. 
     
     But this is evident from the calculations we did earlier. Indeed, for $1\le i\le t$,
     \[[x^{i\ell+i-1}](C+V)(x)=q^{\ol F_i+\ol W_i},\]
     and by construction $\ol F_i$ is minimal with respect to $\ol F_i+\ol W_i=\BZ$, which implies that every element of $\ol F_i$ has a dependent in the $(i\ell+i-1)$-th column. Similarly, for $t+1\le \alpha(i,j)\le N$, 
     \[[x^{\alpha_{i,j}\ell+\alpha_{i,j}-1}](C+V)(x)=q^{\ol K_i\sqcup \bigsqcup_\mu (\ol F_\mu+\ol W_{i,j;\mu})\sqcup\bigsqcup_\nu (\ol K_\nu+\ol U_{i,j;\nu})},\]
     where
     \[\bigcup_{\mu=1}^t (\ol F_\mu+\ol W_{i,j;\mu})\cup\bigcup_{\nu=1}^r (\ol K_\nu+\ol U_{i,j;\nu})=\BZ\sm \ol S_{i,j}\]
     and $\ol S_{i,j}\subseteq\ol K_i$, so that $S_{i,j}$ is covered exactly once in the expression $\ol K_i\sqcup \bigsqcup_\mu (\ol F_\mu+\ol W_{i,j;\mu})\sqcup\bigsqcup_\nu (\ol K_\nu+\ol U_{i,j;\nu})$, so that there are dependent elements of $\ol S_{i,j}$ in column $\alpha_{i,j}\ell+\alpha_{i,j}-1$. 
     
     Unioned over all $i,j$, this gives dependent elements for all of $\bigcup_{i,j}S_{i,j}=K$ and $\bigcup_i F_i=F$, so that there are dependent elements for all of $C$, which concludes our check that $C$ is minimal.
\end{proof}

\section{Comments, Questions, and Further Directions}

\subsection{Variations of Main Results}
    In our proof for Theorem 4, note that the place where we crucially used that $B$ is finite is in saying that $m\BN\cup B$ is bounded below, and therefore $S_i$ is bounded below, so if $F+W_i=m\BZ\sm S_i$, then necessarily $F+W_i$ contains a infinite set extending into the negative direction, so that the BL lemma applies. If we drop the assumption that $B$ is finite, this is no longer guaranteed, and we can no longer ensure that $F+W_i=m\BZ\sm S_i$ in a way that gives the dependents of $F$. However we can sidestep this issue by dedicating a separate column to the dependents of $F$: since $\ol F$ is finite, by the theorem of Kwon it arises as a MAC in $\BZ$, and we can instead use $\ol F+\ol W=\BZ$ to give the dependent elements of $F$. Modifying the proof appropriately so that there is now $n+1$ columns of dependents rather than $n$, we can obtain that
    \begin{theorem}
        Let $|B_{/m}|=|F_{/m}|=|A|=1$ with $F_{/m}=\{\wt f\}$, without loss of generality let $A=\{0\}$, and let $B$ be infinite. Then the existence of a set cover $\{S_i\}$,
        \[S_1\cup\cdots\cup S_n=m\BN\cup B,\]
        such that each member $S_i$ has
        \[m\BZ\sm S_i=F+W_i\]
        for some $W_i\subseteq\BZ$, implies that if 
        \[m\ge \wt f+2\wt f\floor*{\frac{n+1}{\wt f}}+\on{mod}_{\wt f}(n+1),\]
        then
        \[C=m\BN\cup B\cup F\]
        arises as a MAC.
    \end{theorem}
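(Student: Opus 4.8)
The plan is to re-run the proof of Theorem 4 almost verbatim, inserting exactly one extra ``passenger'' to replace the single step that breaks when $B$ is infinite. For $1\le i\le n$ the hypothesis already supplies $W_i$ with $F+W_i=m\BZ\sm S_i$, i.e.\ $\ol F+\ol W_i=\BZ\sm\ol S_i$ inside $\Col(i)\cong\BZ$; exactly as in Theorem 4 the set $\ol W_i$ is concentrated in the column $-\wt f$, and (outside the degenerate situation where $m\BN\cup B$ is cofinite in $m\BZ$) it still contains infinitely many negative elements, since $m\BZ\sm S_i\supseteq m\BZ\sm(m\BN\cup B)$ is unbounded below. What we can no longer do is invoke the BL lemma to refine $W_i$: because $B$ is infinite, $m\BZ\sm S_i$ need not contain a ray isomorphic as a monoid to $\BN$, so we cannot arrange $\ol F+\ol W_i=\BZ\sm\ol S_i$ in a way that exhibits dependents of $F$. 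The remedy, indicated in the paragraph preceding the statement, is to devote a separate column to the dependents of $F$: since $\ol F$ is finite, Kwon's theorem furnishes $\ol W_{n+1}\subseteq\BZ$ with $\ol F+\ol W_{n+1}=\BZ$ and $F$ minimal for this equation, and this serves as the $(n+1)$-st passenger.

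I would then build $V$ exactly as in the proof of Theorem 4, but distributing the $n+1$ passengers $W_1,\dots,W_{n+1}$ among ``cars'' of length $2\wt f$, one remainder bloc, and $q^{\BZ}$-filler blocs, writing $k=\floor{(n+1)/\wt f}$ and $r=\on{mod}_{\wt f}(n+1)$. The generating-function bookkeeping for $(C+V)(x)=C(x)V(x)$, with $C(x)=q^{\ol K}x^0+q^{\ol F}x^{\wt f}$ (here $K\coloneqq m\BN\cup B$), is word-for-word the same: each passenger $i$ contributes the coefficient $q^{\ol K\sqcup(\ol F+\ol W_i)}$ at its dependent column, the error terms fall between consecutive filler blocks without meeting the next car's dependent term, and the inequality $m\ge(2k+1)\wt f+r=\wt f+2\wt f\floor{(n+1)/\wt f}+\on{mod}_{\wt f}(n+1)$ is precisely what forces all exponents into $[0,m-1]$ so that like terms merge correctly. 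The verification that $C+V=\BZ$ is identical to Theorem 4; at the new $(n+1)$-st column one reads off $q^{\ol K\sqcup(\ol F+\ol W_{n+1})}=q^{\ol K\sqcup\BZ}\ge q^{\BZ}$ directly.

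For minimality the dependents of $S_i$ --- hence of $\bigcup_i S_i=m\BN\cup B$ --- are produced by columns $1,\dots,n$ exactly as in Theorem 4, a part of the argument that never used finiteness of $B$; so it remains to extract dependents of $F$ from column $n+1$, whose coefficient is $q^{\ol K\sqcup\BZ}$. \emph{This is the one genuinely new point, and the main obstacle:} the ever-present $\ol K$-summand threatens to cover the very points at which $F$ witnesses its minimality, so that they get counted twice and cease to be dependent. To rule this out I would exploit the hypothesis itself --- $\ol F+\ol W_1=\BZ\sm\ol S_1\supseteq\BZ\sm\ol K$, so $\BZ\sm\ol K$ is coverable by translates of $\ol F$ --- and feed this into a trimming argument in the spirit of the BL lemma, choosing the Kwon realization $\ol W_{n+1}$ so that every $f\in F$ has a dependent element lying in $\BZ\sm\ol K$; such a dependent is disjoint from $\ol K$, hence counted exactly once in $q^{\ol K\sqcup\BZ}=q^{\ol K\sqcup(\ol F+\ol W_{n+1})}$, and so remains a genuine dependent of $f$ in $C+V=\BZ$. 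The degenerate case $m\BN\cup B=m\BZ$ (where $\BZ\sm\ol K=\emptyset$ and the Theorem 4 template itself degenerates, and likewise the cofinite situations) falls outside this scheme and is handled separately: for $|F_{/m}|=1$, $A=\{0\}$ it is already covered by Proposition 1 together with its infinite-$B$ extension in the Appendix, leaving the scheme above to dispatch the general case and yield the stated bound.
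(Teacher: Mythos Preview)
Your overall plan matches the paper's one-paragraph sketch: re-run the Theorem~4 construction with one extra passenger for $F$, supplied by Kwon's theorem, and replace $n$ by $n+1$ in the bound. You also go further than the paper by spotting that, implemented naively, the $(n+1)$-st dependent column acquires coefficient $q^{\ol K\sqcup(\ol F+\ol W_{n+1})}=q^{\ol K\sqcup\BZ}$, so Kwon's dependents $\delta_f$ may lie in $\ol K$ and be double-counted. That observation is correct. The gap is in your resolution: from $\BZ\sm\ol K\subseteq\BZ\sm\ol S_1=\ol F+\ol W_1$ it does \emph{not} follow that $\BZ\sm\ol K$ itself equals $\ol F+W$ for some $W$ (a subset of an $\ol F$-coverable set need not be $\ol F$-coverable), and even if it did, the BL lemma you invoke needs the target to contain a copy of $\BN$, which $\BZ\sm\ol K=\BZ_-\sm\ol B$ need not when $B$ is infinite. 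So the claimed ``trimming'' is not justified, and you have not shown one can arrange the Kwon dependents inside $\BZ\sm\ol K$. Separately, your deferral of the cofinite cases to Proposition~1's appendix only handles the singleton case $|F|=1$, not general $|F_{/m}|=1$.

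A cleaner fix --- and arguably what the paper's phrase ``dedicating a separate column'' intends --- is simply to \emph{omit the singleton} $\{0\}$ from the $(n+1)$-st passenger's slot in the car. Then that dependent column receives only $q^{\ol F}q^{\ol W_{n+1}}=q^{\BZ}$, with no $q^{\ol K}$ term at all, and Kwon's dependents transfer verbatim. The sole side effect is that the column $\wt f$ to the right loses a lone $q^{\ol F}$; but if you place this passenger last, that column falls into the filler region, where the coefficient is already $q^{\ol K+\BZ}\ge q^{\BZ}$ from $V_\BZ$ alone, so the loss is harmless. With this adjustment the coverage and minimality checks go through exactly as in Theorem~4, and the bound computation with $k=\lfloor(n+1)/\wt f\rfloor$, $r=(n+1)\bmod\wt f$ is unchanged.
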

    Note that this bound is the same as the one in Theorem 4 except $n$ is replaced with $n+1$, where the $+1$ is for the extra column dedicated to $F$. 
    
    One might then ask what happens if the $|B|<\infty$ condition is weakened in the general case. We do not see an immediate solution, as Theorem 6 was derived from Theorem 7 by taking the worst-case scenario in which we dedicate a column for every member of $B$, but if $B$ is infinite then this approach no longer works. We hence pose the following question:
    \begin{question*}
        Is some variant of Theorem 6 true for infinite $B$? That is, even if $B$ is infinite, is it still true that every eventually periodic set is eventually a MAC?
    \end{question*}

\subsection{Regarding the Formal Power Series}
    We remark that, in the ring $\cal Q$, the only units are $\pm q^0$. 

    Since no two nonempty sets satisfy $A\oplus B=\emptyset$, we have that no two nonzero $q^A,q^B$ satisfy $q^Aq^B=q^\emptyset=0$, so that $\cal Q$ is an integral domain. Then we can consider the fraction field $\Frac \cal Q$, as well as generating series $(\Frac\cal Q)[[x]]$ with coefficients in this fraction field. The classical result from elementary formal power series tells us that
    \begin{observation*}
        A power series $S(x)\in(\Frac\cal Q)[[x]]$ has a uniquely determined multiplicative inverse in this ring if and only if $[x^0]S(x)\neq 0$. 
    \end{observation*}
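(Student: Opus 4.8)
The plan is to deduce this from the classical fact that a formal power series over a field is invertible in the power-series ring precisely when its constant term is nonzero. The one extra ingredient needed is supplied by the paragraph immediately preceding the statement: since $\cal Q$ is an integral domain, $\Frac\cal Q$ is a field, so any nonzero element of $\Frac\cal Q$ --- in particular a nonzero $[x^0]S(x)$ --- is automatically a unit. The ring in question, $(\Frac\cal Q)[[x]]$, is commutative with unit, so ``left inverse'', ``right inverse'', and ``two-sided inverse'' all coincide.

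First I would record the easy implication. If $S(x)T(x)=1$ for some $T(x)=\sum_{n\ge 0}b_n x^n\in(\Frac\cal Q)[[x]]$, then comparing coefficients of $x^0$ gives $[x^0]S(x)\cdot b_0=1$ in $\Frac\cal Q$, whence $[x^0]S(x)\neq 0$. For the converse, write $S(x)=\sum_{n\ge 0}a_n x^n$ with $a_0\coloneqq[x^0]S(x)\neq 0$, so that $a_0$ is invertible in the field $\Frac\cal Q$. Define $b_0\coloneqq a_0^{-1}$ and, recursively for $n\ge 1$,
\[b_n\coloneqq -a_0^{-1}\sum_{k=1}^{n} a_k b_{n-k},\]
and set $T(x)\coloneqq\sum_{n\ge 0}b_n x^n\in(\Frac\cal Q)[[x]]$. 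Then the coefficient of $x^n$ in $S(x)T(x)$ is $\sum_{k=0}^{n}a_k b_{n-k}$, which equals $a_0 b_0=1$ when $n=0$ and equals $a_0 b_n+\sum_{k=1}^{n}a_k b_{n-k}=0$ when $n\ge 1$, by the defining recursion; hence $S(x)T(x)=1$, so $S(x)$ is a unit.

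Uniqueness is then immediate: if $S(x)T(x)=S(x)T'(x)=1$, commutativity and associativity give $T(x)=T(x)\left(S(x)T'(x)\right)=\left(T(x)S(x)\right)T'(x)=T'(x)$; alternatively, the recursion above already pins down every $b_n$ uniquely in terms of $a_0^{-1}$ and the $a_k$. There is no genuine obstacle in this argument --- it is entirely routine --- and the only point not to skip is the appeal to $\Frac\cal Q$ being a field (equivalently, to $\cal Q$ being an integral domain), which is exactly why that remark is placed just before this observation.
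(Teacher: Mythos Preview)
Your argument is correct and is the standard textbook proof of this classical fact. Note, however, that the paper does not actually supply a proof here: it simply invokes ``the classical result from elementary formal power series'' and moves on, so your writeup is more detailed than what appears in the paper itself.
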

    For the same reason, $[x^0]S(x)\neq 0$ implies $S(x)\in(\Frac\cal Q)[[x]]/\wangle{x^m-q^1}$ has a multiplicative inverse. In fact, this is true even if $[x^0]S(x)=0$, as long as $S(x)$ is not identically zero. Indeed, in the ring $S(x)\in(\Frac\cal Q)[[x]]/\wangle{x^m-q^1}$, note that $x^i$ has the multiplicative inverse $q^{-1}x^{m-i}$; then, given a $S(x)$ with zero constant term, we can write it as $S(x)=x^k\pr*{\sum_{i=0}^{m-1} q^{A_i}x^i}$, where $q^{A_0}\neq 0$; then $S(x)^{-1}=q^{-1}x^{m-k}\pr*{\sum_{i=0}^{m-1} q^{A_i}x^i}^{-1}$. Hence
    \begin{observation*}
        Any nonzero power series $0\neq S(x)\in(\Frac\cal Q)[[x]]/\wangle{x^m-q^1}$ has a multiplicative inverse in this ring.
    \end{observation*}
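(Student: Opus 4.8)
The plan is to reduce the general statement to the case $[x^0]S(x)\neq 0$, which is exactly the preceding Observation, by dividing out an appropriate power of $x$ --- once one has checked that $x$ is itself a unit in the quotient ring.

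First I would verify that $x$, and hence every power $x^j$, is invertible in $(\Frac\cal Q)[[x]]/\wangle{x^m-q^1}$. This is where passing to the quotient by $\wangle{x^m-q^1}$ pays off: since $\cal Q$ was just shown to be an integral domain, $q^1$ is a nonzero element of $\cal Q$ and hence becomes a unit in $\Frac\cal Q$, whose inverse I will write $q^{-1}$. In the quotient one then computes $x\cdot (q^{-1}x^{m-1}) = q^{-1}x^m = q^{-1}q^1 = 1$, so $x$ is a unit, and consequently so is each power $x^j$.

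Next, given $0\neq S(x)$ in the quotient, I would pick a power series representative $\widetilde S(x)\in(\Frac\cal Q)[[x]]$; it is nonzero, since the zero series lies in $\wangle{x^m-q^1}$. Let $k$ be the order of $\widetilde S(x)$, that is, the least index at which a nonzero coefficient appears, and write $\widetilde S(x) = x^k\,\widetilde T(x)$ with $[x^0]\widetilde T(x)\neq 0$. By the preceding Observation, $\widetilde T(x)$ has a multiplicative inverse in $(\Frac\cal Q)[[x]]$ --- over the field $\Frac\cal Q$, a power series with invertible constant term is inverted coefficient by coefficient --- so its image $T(x)$ in the quotient is again a unit, being the image of a unit under the quotient homomorphism. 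Since $x^k$ is a unit in the quotient as well, $S(x) = x^k\,T(x)$ is a product of units, hence a unit, with $S(x)^{-1} = (x^k)^{-1}\,T(x)^{-1}$.

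I do not anticipate a genuine obstacle here. The single point that is not purely formal is the invertibility of $x$, and that in turn is immediate from $q^1\neq 0$ in $\cal Q$, equivalently from the integral-domain property established just above; everything after that is the familiar ``factor out the leading power, invert what is left'' argument, transported across the quotient map.
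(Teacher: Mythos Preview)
Your proposal is correct and follows essentially the same route as the paper: show that $x$ is a unit in the quotient via $x\cdot q^{-1}x^{m-1}=1$, factor any nonzero $S(x)$ as $x^k$ times something with nonzero constant term, and invert each factor separately. The paper's argument is identical in outline, only phrased slightly more tersely (it writes the inverse explicitly as $q^{-1}x^{m-k}\bigl(\sum q^{A_i}x^i\bigr)^{-1}$ rather than passing through representatives and the quotient homomorphism).
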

    
    One might wonder if this multiplicative inverse is unique like it is for the classical setting of formal series over fields. By expanding the equation
    \[(q^{A_0}x^0+\cdots+q^{A_{m-1}}x^{m-1})(q^{B_0}x^0+\cdots+q^{B_{m-1}}x^{m-1})=1,\]
    one can see that the relevant system of equations is
    \begin{align*}
        \begin{pmatrix}
            q^{A_0} & q^{A_{m-1}+1} & q^{A_{m-2}+1} & q^{A_{m-3}+1} & \cdots & q^{A_1+1}\\
            q^{A_1} & q^{A_0} & q^{A_{m-1}+1} & q^{A_{m-2}+1} & \cdots & q^{A_2+1}\\
            q^{A_2} & q^{A_1} & q^{A_0} & q^{A_{m-1}+1} & \cdots & q^{A_3+1}\\
            \vdots & \vdots &\vdots & \ddots & \ddots & \vdots\\
            q^{A_{m-2}} & q^{A_{m-3}} & q^{A_{m-4}} & \cdots & q^{A_0} & q^{A_{m-1}+1}\\
            q^{A_{m-1}} & q^{A_{m-2}} & q^{A_{m-3}} & \cdots &q^{A_1} & q^{A_0}
        \end{pmatrix}\begin{pmatrix} q^{B_0}\\ q^{B_1}\\q^{B_2}\\ \vdots\\ q^{B_{m-2}} \\ q^{B_{m-1}} \end{pmatrix}=\begin{pmatrix} q^0\\ 0\\0\\ \vdots\\0 \\ 0 \end{pmatrix}.
    \end{align*}
    (The presence of the extra $q^1$'s on the upper triangle comes from the fact that $x^m=q^1$.) Then a multiplicative inverse in $(\Frac\cal Q)[[x]]/\wangle{x^m-q^1}$ is unique if and only if this Toeplitz matrix on the left (which has entries in this strange field $\Frac\cal Q$)
    \[T\coloneqq \begin{pmatrix}
            q^{A_0} & q^{A_{m-1}+1} & q^{A_{m-2}+1} & q^{A_{m-3}+1} & \cdots & q^{A_1+1}\\
            q^{A_1} & q^{A_0} & q^{A_{m-1}+1} & q^{A_{m-2}+1} & \cdots & q^{A_2+1}\\
            q^{A_2} & q^{A_1} & q^{A_0} & q^{A_{m-1}+1} & \cdots & q^{A_3+1}\\
            \vdots & \vdots &\vdots & \ddots & \ddots & \vdots\\
            q^{A_{m-2}} & q^{A_{m-3}} & q^{A_{m-4}} & \cdots & q^{A_0} & q^{A_{m-1}+1}\\
            q^{A_{m-1}} & q^{A_{m-2}} & q^{A_{m-3}} & \cdots &q^{A_1} & q^{A_0}
        \end{pmatrix}\]
    is invertible. The author wonders if some type of complexification is possible, so that the Gershgorin circle theorem (or some appropriate variant) becomes applicable. 
    
    We suppose it is possible that somehow this matrix is always invertible. For example, in the case $m=2$, the determinant of this matrix is
    \[\det T=q^{A_0\oplus A_0}-q^{A_1\oplus A_1\oplus 1},\]
    which we see cannot be zero since $A_0\oplus A_0=A_1\oplus A_1\oplus 1$ is impossible, as the smallest number in $A_0\oplus A_0$ (which must be the sum of the smallest number in $A_0$ with itself) must be even, while the smallest number in $A_1\oplus A_1\oplus 1$ must be odd. Hence 
    \begin{observation*}
    In the case $m=2$, all multiplicative inverses in $(\Frac\cal Q)[[x]]/\wangle{x^m-q^1}$ are unique. 
    \end{observation*}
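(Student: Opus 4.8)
The plan is to follow the reduction established just before the statement. Writing $S(x)=q^{A_0}+q^{A_1}x$, a candidate inverse $T(x)=q^{B_0}+q^{B_1}x$ in $(\Frac\cal Q)[[x]]/\langle x^2-q^1\rangle$ is — after expanding $S(x)T(x)=1$ and using $x^2=q^1$ — precisely a solution over the field $\Frac\cal Q$ of
\[\tbt{q^{A_0}}{q^{A_1\oplus 1}}{q^{A_1}}{q^{A_0}}\tbo{q^{B_0}}{q^{B_1}}=\tbo{1}{0},\]
so the inverse, when it exists, is unique if and only if this Toeplitz matrix $T$ is invertible over $\Frac\cal Q$, i.e.\ if and only if $\det T\neq 0$. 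One computes $\det T=q^{A_0}q^{A_0}-q^{A_1\oplus 1}q^{A_1}=q^{A_0\oplus A_0}-q^{A_1\oplus A_1\oplus 1}$, so the entire matter reduces to showing this element of $\cal Q$ is nonzero, equivalently that the weighted sets $A_0\oplus A_0$ and $A_1\oplus A_1\oplus 1$ are distinct (using that $C\mapsto q^C$ is injective on weighted sets, the same bijection already invoked when the formal power series were introduced).

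To see the two sets are distinct I would compare their least elements. The sets arising here as $A_0,A_1$ are bounded below — each is the ``shape'' of a column of an eventually periodic set, i.e.\ finitely many points together with an upward ray — so $\min A_0$ and $\min A_1$ are defined; moreover $\oplus$ of bounded-below sets is bounded below, so we may evaluate $q^{\{n\}}\mapsto t^n$ into the field of formal Laurent series $\BZ((t))$ (the relevant Cauchy products converging), under which $q^{A_0\oplus A_0}\mapsto(\sum_n a_n t^n)^2$ and $q^{A_1\oplus A_1\oplus 1}\mapsto t(\sum_n b_n t^n)^2$, with $a_n,b_n$ the multiplicities of $n$ in $A_0,A_1$. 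The lowest-degree term of the first lies in degree $2\min A_0$, which is even, and that of the second in degree $2\min A_1+1$, which is odd; opposite parities force these to be distinct series, hence distinct elements of $\cal Q$, hence $\det T\neq 0$. (If $A_0=\emptyset$ or $A_1=\emptyset$, one coefficient of $S(x)$ vanishes and $\det T=\pm q^{A_1\oplus A_1\oplus 1}$ or $q^{A_0\oplus A_0}$ is visibly a nonzero monomial.) Thus $T$ is invertible and the inverse of $S(x)$ is unique, completing the argument for $m=2$.

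The only step that is not purely mechanical is the injectivity of $C\mapsto q^C$ — certifying that distinct weighted subsets of $\BZ$ name distinct elements of $\cal Q$ — which is what lets one pass from ``$\det T=0$'' to the weighted-set equality $A_0\oplus A_0=A_1\oplus A_1\oplus 1$; granting that, the determinant formula and the parity comparison are each one line. I would also note that the hypothesis $m=2$ is not really essential: $(\Frac\cal Q)[[x]]/\langle x^m-q^1\rangle$ is a commutative ring, so an inverse is automatically unique once it exists, and existence for every nonzero element and every $m$ was exactly the previous observation; the $m=2$ determinant above is just an explicit self-contained route that simultaneously re-establishes existence and uniqueness.
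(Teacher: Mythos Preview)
Your proof follows the same route as the paper's: compute $\det T = q^{A_0\oplus A_0} - q^{A_1\oplus A_1\oplus 1}$ and argue it is nonzero because the least element of $A_0\oplus A_0$ is $2\min A_0$ (even) while that of $A_1\oplus A_1\oplus 1$ is $2\min A_1+1$ (odd). Your Laurent-series evaluation is just a careful packaging of that parity comparison; the paper states the same thing in one sentence.

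One misstep: you justify that $A_0,A_1$ are bounded below by saying they are shapes of columns of an eventually periodic set. That is a misreading of the context---this observation lives in the subsection on the formal power series ring and is stated for arbitrary nonzero elements of $(\Frac\cal Q)[[x]]/\langle x^2-q^1\rangle$, not only those arising from eventually periodic sets. The paper's own argument makes the same tacit assumption that minima exist without comment, so this is not a gap relative to the paper, but your stated reason is not the right one.

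Your final remark---that in a commutative ring an inverse is automatically unique once it exists, so the determinant computation is really re-establishing existence rather than uniqueness---is correct and is a point the paper does not make explicit.
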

    But even in the case $m=3$ this becomes more unwieldy. Indeed, for $m=3$ the determinant becomes
    \[\det T=q^{A_0}q^{A_0}q^{A_0}+q^{A_1}q^{A_1}q^{A_1+1}+q^{A_2}q^{A_2+1}q^{A_2+1}-3q^{A_0}q^{A_1}q^{A_2}q^1,\]
    so that 
    \[\det T=0\iff (A_0\oplus A_0\oplus A_0)\sqcup(A_1\oplus A_1\oplus A_1\oplus 1)\sqcup (A_2\oplus A_2\oplus A_2\oplus 2)=3(A_0\oplus A_1\oplus A_2\oplus 1).\]
    We can treat this case in the same way as before: letting the minimal element of $A_i$ be $a_i$ with multiplicity\footnote{Not conflicting with the $m$ which denotes which $\Xi_m$ we are in.} $m_i$, we see that the minimum element of the left-hand side is
    \[\min \bigpr{A_0^{\oplus 3}\sqcup (A_1^{\oplus 3}\oplus1)\sqcup (A_2^{\oplus 3}\oplus 1)}=\min\{3a_0,3a_1+1,3a_2+2\},\ \te{with multiplicity } m_i^3;\]
    on the other hand, the minimal element of the right hand side is
    \[\min \bigpr{3(A_0\oplus A_1\oplus A_2\oplus 1)}=a_0+a_1+a_2+1,\ \te{with multiplicity }m_0m_1m_2.\]
    For $\det T=0$ to be true, the right-hand-side minimum must agree with the left-hand-side minimum, which is one of the stated three things. If $a_0+a_1+a_2+1=3a_0$ is the common minimum of both sides, then $a_1+a_2+1=2a_0$, i.e. $1=(a_0-a_1)+(a_0-a_2)$, so that $a_0$ must be greater than one of $a_1$ and $a_2$, contradicting the minimality of $3a_0$. If $a_0+a_1+a_2+1=3a_1+1$ is the common minimum of both sides, then $a_0+a_2=2a_1$. i.e. $0=(a_1-a_0)+(a_1-a_2)$, which also contradicts\footnote{If $a_1\ge a_0$, then $3a_0<3a_1+1$, contradiction. Otherwise, we must have $a_1>a_2$, so that $3a_1+1>3a_2+2$, also contradiction.} the minimality of $3a_1+1$. Lastly, if $a_0+a_1+a_2+1=3a_2+2$ is the common minimum, then $a_0+a_1=2a_2+1$, i.e. $1=(a_0-a_2)+(a_1-a_2)$, also contradicting\footnote{$a_0\le a_2$ implies $3a_0<3a_2+2$, contradicting minimality, so we must have $a_0>a_2$. But $a_1\le a_2$ also implies $3a_1+1<3a_2+2$, also contradiction; hence also $a_1>a_2$. But the sum of two positive numbers cannot be 1.} the minimality of $3a_2+2$. Hence the two minima cannot possibly be equal, and so we conclude $\det T\neq 0$:
    \begin{proposition}
        For $m\le 3$, all multiplicative inverses for nonzero elements in $(\Frac\cal Q)[[x]]/\wangle{x^m-q^1}$ exist and are unique.  
    \end{proposition}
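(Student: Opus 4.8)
The plan is to reduce the uniqueness assertion to the non-vanishing of a single determinant over the field $\Frac\cal Q$, and then to settle that non-vanishing for $m\le 3$ by a "leading-term" argument. Existence of an inverse for any nonzero element is already the content of the Observation just proved, so only uniqueness remains. Writing a prospective inverse of $S(x)=\sum_{i=0}^{m-1}q^{A_i}x^i$ as $\sum_{i=0}^{m-1}q^{B_i}x^i$ and expanding $S(x)\cdot\sum q^{B_i}x^i=1$ modulo $x^m-q^1$, uniqueness of the tuple $(q^{B_i})$ is equivalent to injectivity of multiplication by $S(x)$ on the free $\Frac\cal Q$-module spanned by $1,x,\dots,x^{m-1}$, hence to invertibility of the Toeplitz matrix $T$ displayed above, hence (the module being finite dimensional) to $\det T\neq 0$.

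For $m=1$ this is immediate, since $T=(q^{A_0})$ with $q^{A_0}\neq 0$. For $m=2$ and $m=3$ I would expand $\det T$ by cofactors; as recorded in the discussion this gives $\det T=q^{A_0\oplus A_0}-q^{A_1\oplus A_1\oplus 1}$ when $m=2$, and the analogous three-term combination when $m=3$. Using the dictionary $q^Aq^B=q^{A\oplus B}$ and $q^A+q^B=q^{A\sqcup B}$, the equation $\det T=0$ rewrites as an equality of weighted subsets of $\BZ$, with the stray $q^1$'s shifting supports by $1$.

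The crux is then to compare the minimal integer appearing on the two sides of that equality. Here one uses that, for weighted sets, $\min(A\oplus B)=\min A+\min B$ (with the product of multiplicities, in particular still a positive multiplicity), while $\min(A\sqcup B)$ is the smaller of the two minima. Letting $a_i=\min A_i$, the left-hand side of $\det T=0$ then has minimal element $\min\{2a_0,\,2a_1+1\}$ (resp.\ $\min\{3a_0,\,3a_1+1,\,3a_2+2\}$), while the right-hand side has minimal element $a_0+a_1+1$ (resp.\ $a_0+a_1+a_2+1$); since the minimal monomials on the two sides are carried by distinct integers there can be no cancellation at that level, so the two minima must coincide. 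A short case split on which candidate realizes the left-hand minimum then produces, in every branch, a relation among the $a_i$ — for instance, in the $m=3$ branch where $3a_0$ is declared the common minimum one obtains $1=(a_0-a_1)+(a_0-a_2)$, forcing $a_0>a_1$ or $a_0>a_2$ — contradicting that that candidate was minimal. Hence $\det T\neq 0$ and the inverse is unique.

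I expect the only real computation to be this case analysis, and I expect the genuine obstacle to be that the method is intrinsically low-dimensional: for $m\ge 4$ the cofactor expansion of $\det T$ contains many monomials $q^{A_{\sigma(1)}\oplus\cdots\oplus A_{\sigma(m)}}$, and one can no longer exclude cancellations of equal monomials carrying opposite signs simply by inspecting minimal elements. Pushing past $m=3$ would seem to require a different idea — a "complexified" Gershgorin-type estimate on $T$, or a structural reason for the invertibility of such $q$-Toeplitz matrices — which is exactly what the remarks preceding the proposition gesture at.
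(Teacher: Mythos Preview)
Your approach is essentially the paper's: reduce uniqueness to $\det T\neq 0$ over $\Frac\cal Q$, then for $m\le 3$ rewrite $\det T=0$ as an equality of multisets and derive a contradiction by comparing minimal elements; your $m=3$ case analysis matches the paper's verbatim.

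One slip: your stated minima for $m=2$ are wrong. From $\det T=q^{A_0\oplus A_0}-q^{A_1\oplus A_1\oplus 1}$, the equation $\det T=0$ reads $A_0\oplus A_0=A_1\oplus A_1\oplus 1$, whose two sides have minima $2a_0$ and $2a_1+1$ respectively --- there is no cross term $A_0\oplus A_1\oplus 1$ here, so your claimed right-hand minimum $a_0+a_1+1$ does not arise. The paper disposes of $m=2$ by the simpler parity observation that $2a_0$ is even while $2a_1+1$ is odd. This does not affect the overall strategy, but the $m=2$ paragraph as written would need to be corrected.
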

    As a remark, it is not enough to check the sizes (with multiplicity taken into account of course) of both sides, since the sizes are
    \[|A_0|^3+|A_1|^3+|A_2|^3\quad\te{and}\quad 3|A_0||A_1||A_2|,\]
    which is precisely AM-GM and are equal precisely when $|A_0|=|A_1|=|A_2|$. 
    
    However, one could try to use this idea of comparing sizes to derive sufficient conditions for unique inverses. Taking the sizes of the entries in the Toeplitz matrix, we obtain that if the determinant of the symmetric\footnote{This is because $|A\oplus 1|=|A|$.} Toeplitz matrix
    \[\det\begin{pmatrix}  
        |A_0| & |A_{m-1}| &\cdots& \cdots & |A_1|\\
        |A_1| & |A_0| & |A_{m-1}| & \cdots &|A_2|\\
        \vdots & |A_1| & \ddots & \ddots& \vdots \\
        \vdots  & \vdots &\ddots &|A_0| & |A_{m-1}|\\
        |A_{m-1}|&|A_{m-2}|&\cdots&|A_1|&|A_0|
    \end{pmatrix}\]
    is nonzero, then $A(x)$ has a unique multiplicative inverse in $(\Frac\cal Q)[[x]]/\wangle{x^m-q^1}$. This matrix has integer entries, so we may now apply familiar results; for example, applying the Gershgorin circle theorem\footnote{As a reminder, this theorem states that, for a complex matrix $A=\{a_{ij}\}$, every eigenvalue of $A$ lies in at least one of the discs $B_{\sum_{j\neq i}|a_{ij}|}(a_{ii})\subset\BC$.} [G], one obtains that
    \begin{proposition}
        If 
        \[|A_0|>\sum_{i=1}^{m-1}|A_i|,\]
        then $A(x)$ has a unique multiplicative inverse in $(\Frac\cal Q)[[x]]/\wangle{x^m-q^1}$.
    \end{proposition}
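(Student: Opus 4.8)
The plan is to push the problem through the cardinality homomorphism and then invoke Gershgorin. First I would record the elementary fact that the rule $q^A \mapsto |A|$, with $|A|$ the number of elements of $A$ counted with multiplicity, is compatible with the four defining relations of $\cal Q$: we have $|\emptyset| = 0$, $|\{0\}| = 1$, $|A \oplus B| = |A|\,|B|$, and $|A \sqcup B| = |A| + |B|$. Hence it descends to a ring homomorphism $\varphi\colon \cal Q \to \BZ$. (One does not need, and should not expect, $\varphi$ to extend to $\Frac\cal Q$; it will only ever be applied to honest elements of $\cal Q$.) By the discussion preceding the statement, uniqueness of the multiplicative inverse of $A(x)$ is equivalent to invertibility over the field $\Frac\cal Q$ of the Toeplitz matrix $T$, i.e. to $\det T \neq 0$. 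Since every entry of $T$ lies in $\cal Q$ (each is $q^{A_i}$ or $q^{A_i+1}$) and the determinant is a polynomial in the entries, $\varphi(\det T) = \det(\varphi T)$; and because $|A_i + 1| = |A_i|$, the matrix $\varphi T$ obtained by applying $\varphi$ entrywise is precisely the integer Toeplitz matrix $M$ displayed just above the statement, with $(i,j)$-entry $|A_{(i-j)\bmod m}|$. Therefore it suffices to prove $\det M \neq 0$ under the hypothesis $|A_0| > \sum_{i=1}^{m-1}|A_i|$.

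To see this, observe that in each row of $M$ the diagonal entry is $|A_0|$, while the remaining $m-1$ entries of that row are exactly $|A_1|, \dots, |A_{m-1}|$ in some cyclic order; hence the sum of the absolute values of the off-diagonal entries of every row equals $R \coloneqq \sum_{i=1}^{m-1}|A_i|$, which by hypothesis is strictly less than $|A_0|$. Thus $M$ is strictly diagonally dominant. By the Gershgorin circle theorem every eigenvalue of $M$ (viewed in $\BC$) lies in the disc $\{\, z : |z - |A_0|| \le R \,\}$, and since $R < |A_0|$ this disc does not contain $0$; consequently no eigenvalue vanishes, so $\det M = \prod_i \lambda_i \neq 0$, which by the previous paragraph gives the claimed existence and uniqueness. (Alternatively one could use that $M$ is circulant and write its eigenvalues $\sum_{k=0}^{m-1} |A_k|\,\omega^{jk}$ explicitly, each of modulus at least $|A_0| - R > 0$.)

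I do not anticipate a genuine obstacle here — the content really is just strict diagonal dominance. The only place demanding a little care is the bookkeeping in the first paragraph: one must verify that $\varphi$ is well defined on the quotient ring $\cal Q$ (not merely on the free polynomial ring) and that applying it entrywise to $T$ reproduces $M$ exactly, the subtle point being that the stray factors of $q^1$ sitting in the upper triangle of $T$ — which come from the relation $x^m = q^1$ — contribute the same cardinality as the corresponding lower-triangle entries, so that $M$ has the Toeplitz row structure used above.
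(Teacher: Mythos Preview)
Your proposal is correct and follows the same route as the paper: reduce to the integer Toeplitz matrix of cardinalities and apply Gershgorin via strict diagonal dominance. The paper states this argument quite tersely (essentially one sentence), whereas you have made explicit the one point the paper glosses over, namely that ``taking sizes'' is legitimate because $q^A \mapsto |A|$ respects the defining relations of $\cal Q$ and hence defines a ring homomorphism, so that $\varphi(\det T)=\det M$ and nonvanishing of the latter forces nonvanishing of the former.
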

    
    The case $m=3$ might have given us hope that there is some pattern to be had, but $\det T$ for $m=4$ is
    \[\det T=q^{A_0^4}-4q^{A_0^2}q^{A_1}q^{A_3}q^1-2q^{A_0^2}q^{A_2^2}q^1+4q^{A_0}q^{A_1^2}q^{A_2}q^1\]
    \[\qquad\qquad\qquad\qquad\qquad\qquad\qquad+4q^{A_0}q^{A_2}q^{A_3^2}q^2-q^{A_1^4}q^1+2q^{A_1^2}q^{A_3^2}q^2-4q^{A_1}q^{A_2^2}q^{A_3}q^2+q^{A_2^4}q^2-q^{A_3^4}q^3,\]
    which is far messier. We leave open the following question:
    \begin{question*}
        Under what general circumstances (perhaps always) are multiplicative inverses unique in $(\Frac\cal Q)[[x]]/\wangle{x^m-q^1}$?
    \end{question*}
    
    As a fun example, we can consider the equation
    \[1+x+x^2+\cdots=\frac{1}{1-x}.\]
    When taken to $\Xi_m$, the left hand side is
    \begin{align*}
        1+x+x^2+\cdots&=x^0+x^1+q^1x^0+q^1x^2+q^2x^0+q^2x^1+\cdots\\
        &=(q^0+q^1+q^2+\cdots)x^0+(q^0+q^1+q^2+\cdots)x^1\\
        &=q^\BN x^0+q^\BN x^1\\
        &=(\BN)(x).
    \end{align*}
    On the other hand,
    \begin{align*}
        1-x&=q^0x^0-q^0x^1\\
        &=(\{0\})(x)-(\{1\})(x)\\
        &=(\{0\}\sqminus\{1\})(x),
    \end{align*}
    so that the equation $(1+x+x^2+\cdots)(1-x)=1$ becomes
    \begin{align*}
        1&=(\BN)(x)(\{0\}\sqminus\{1\})(x)\\
        &=(\BN\oplus (\{0\}\sqminus\{1\}))(x)\\
        &=((\BN\oplus\{0\})\sqminus(\BN\oplus\{1\}))(x)\\
        &=(\{1\})(x),
    \end{align*}
    so that 
    \[(1+x+x^2+\cdots)(1-x)=1\llra (\BN\oplus\{0\})\sm (\BN\oplus\{1\})=\{1\}.\]
    
    Since we've seen that multiplication of these formal power series corresponds to the (disjoint) Minkowski sum and addition corresponds to the disjoint union, one may ask what the functional composition corresponds to. The answer is not clear to us, and we leave this open:
    \begin{question*}
        What does functional composition of these formal power series correspond to set-theoretically?
    \end{question*}
    
\subsection{Vaguer Questions and Directions}
    In the construction of these formal power series, our idea was that the information of a set in $Z_m\cong\BZ$ is the same as $\cal P(\BZ)^m$, that is the information of $m$ elements in the power set of $\BZ$. The information of $\cal P(\BZ)$ is then encoded in the coefficients of our formal power series, and the exponents of $x$ indicate which position this set is at in our $m$ elements of $\BZ$. One might ask how to generalize this idea: in general, one might roughly have different exponential symbols $\exp_i(S_i)\in \cal Q_i$ which do not combine, and coefficients in a power series might look like products $\prod_i\exp_i(S_i)$ of these symbols, and each term might have products of powers of variables attached of form $\prod_i x_i^{n_i}$. We suspect these formal power series will only make sense for finitely-generated abelian groups.
    \begin{direction*}
        To further investigate and make precise these generalized formal power series as well as their combinatorial set-theoretic interpretations.
    \end{direction*}
    
    In a similar vein, looking at our setup for the statement of Theorem 7, our idea was to take a collection of points in $\BZ^2$ satisfying some conditions, quotient out by some sequence of ``increasing'' submodules (in our case ``increasing'' meant $\BZ\wangle{(m,-1)}$ where $m$ is increasing), and ask at what point does the image become a MAC. But why this particular sequence of submodules? We suspect that, for some appropriately defined family of ``increasing'' submodules, Theorem 7 is still true.
    \begin{question*}
        How should one define these families of ``increasing'' submodules so that some appropriate variant of Theorem 7 is still true? And how ought we to modify the formal power series in this framework?
    \end{question*}
    
    One can take these questions even further. But why should 2 be special? 
    \begin{question*}
    What if instead we considered a collection of points in some other (finitely-generated) abelian group, quotiented out by some sequence of ``increasing'' submodules (we guess of the same rank), and asked at what point does the image become a MAC? And how ought we to define the formal power series in this framework?
    \end{question*}
    
    Lastly, one might consider similar questions for nonabelian groups, but without the structure afforded by finitely-generated abelian groups this question is rather unclear. We would also like to remark that, in the case of formal power series for enumerative purposes, there was a categorification incarnated in the form of the theory of combinatorial species wherein lied a vast and rich theory. Although much more of a ``long shot'', one might ask whether the formal power series defined in the present paper also admitted such a categorification. In the species case, this was achieved roughly by considering the automorphism groups of the exponents (where $x^n$ represented $[n]$) and gluing them together into a category; in our case, it is unclear what, if any, automorphism groups should be considered.
    
\section{Appendix: An Explanation of the Operations of Series and Proof of Proposition 1 in a Special Case}
We explain Definition 9.
\begin{proof}[Explanation of Definition]
    We have secretly hidden some content in claiming $A(x)+B(x)=(A\sqcup B)(x)$ and $A(x)B(x)=(A+ B)(x)$, but the amount of content is epsilon and is evident from FOIL (distributive property of multiplication). Indeed, writing $A=A_1\sqcup\cdots\sqcup A_r$ and $B=B_1\sqcup\cdots\sqcup B_t$, it is clear that $A\sqcup B=A_1\sqcup\cdots\sqcup A_r\sqcup B_1\sqcup\cdots\sqcup B_t$, i.e.
    \[(A\sqcup B)(x)=\sum_{i=1}^rq^{A_i}x^{a_i}+\sum_{j=1}^t q^{B_j}x^{b_j}=A(x)+B(x).\]
    
    Now that we've shown the first equality, to show that $A(x)B(x)=(A+B)(x)$, let us first show the basic case when $A=A_1$ and $B=B_1$ are each concentrated in a single column. In that case $A(x)=q^Ax^a$ and $B(x)=q^Bx^b$, whereupon it is obvious that $A+B\llra (\ol A+\ol B,\wt a+\wt b)$, so that $(A+B)(x)=q^{\ol A+\ol B}x^{\wt a+\wt b}=q^{A+B}x^{a+b}$, while $A(x)B(x)=q^Ax^aq^Bx^b$, so that
    \[(A+B)(x)=A(x)B(x).\]
    Now that the base case is established, to get the general case we can see that
    \begin{align*}
        A+B&=   (A_1\sqcup\cdots\sqcup A_r)+B\\
     &=(A_1+B)\sqcup\cdots\sqcup(A_r+B)\\
        (A+B)(x)&=\bigpr{(A_1+B)\sqcup\cdots\sqcup(A_r+B)}(x)\\
        &=(A_1+B)(x)+\cdots+(A_r+B)(x)\\
        &=\bigpr{A_1+(B_1\sqcup\cdots\sqcup B_t)}(x)+\cdots +\bigpr{A_r+(B_1\sqcup\cdots\sqcup B_t)}(x)\\
        &=\bigpr{(A_1+B_1)\sqcup\cdots\sqcup (A_1+B_t)}(x)+\cdots +\bigpr{(A_r+B_1)\sqcup\cdots\sqcup (A_r+B_t)}(x)\\
        &=(A_1+B_1)(x)+\cdots+(A_1+B_t)(x)+\cdots (A_r+B_1)(x)+\cdots+(A_r+B_t)(x)\\
        &=A_1(x)B_1(x)+\cdots+A_1(x)B_t(x)+\cdots+A_r(x)B_1(x)+\cdots A_r(x)B_t(x)\\
        &=\bigpr{A_1(x)+\cdots+A_r(x)}\bigpr{B_1(x)+\cdots+B_t(x)}\\
        &=A(x)B(x)
    \end{align*}
    \end{proof}
    
    We now prove Proposition 1 in the case that $B$ is infinite.
    \begin{proof}[Proof of Proposition 1]
        If $B$ is infinite and $m\BN\cup B\neq m\BZ$, we can still proceed as follows. Being a singleton, $\{f\}$ has another set $W$ such that $\{f\}+W=m\BZ_-\sm B$ (this set is not empty since $m\BN\cup B\neq m\BZ$). Then 
    \[C+\pr*{\{0\}\cup W\cup\bigcup_{i=1}^{m-1}(m\BZ+i)\sm(m\BZ+m-\wt f)}=\BZ\]
    realizes $C$ as a MAC. Indeed, in this expression, $C+\{0\}$ ensures that $m\BN\cup B$ has dependent elements, and $C+W$ ensures that $\{f\}$ has dependent elements. It is easy to see that this covers all of $\BZ$, as well as that the 0-th column contains dependent elements for all of $C$. 
    
    Even if $m\BN\cup B=m\BZ$, as long as $m\ge 3$, we can still consider 
    \[C+\pr*{\bigcup_{i=0}^{m-2}(i+im)\cup (m\BZ+m-1-\wt f)        }=\BZ.\]
    The dependent elements of $m\BN\cup B$ are given by the translates by $i+im$, and the dependent elements of $\{f\}$ are given by the translates by $m\BZ+m-1-\wt f$. 
    
    However, if both $m\BN\cup B=m\BZ$ and $m=2$, it is easy to see by inspection that this set cannot be a MAC; we cannot cover all of $\BZ$ while maintaining a dependent element of $\{f\}$, for instance.
    \end{proof}
     
\section{Acknowledgements}
    We would like to thank the Duluth REU Program, where this present research was conducted under grant numbers NSF-DMS 1949884 and NSA H98230-20-1-0009, for a wonderful nurturing experience and environment and Professor Joe Gallian for putting everything together and making all of this possible. We would also like to thank Professor Gallian and the REU participants and advisors as well as all the visitors for their helpful comments during the talks; in particular we are incredibly thankful and indebted to Amanda Burcroff, whose paper this was based off of, for introducing us to the subject and for her many insightful and helpful comments and suggestions. Lastly we would like to thank our grandparents for their unwavering support.\begin{CJK*}{UTF8}{gbsn} 愚孙无翁慈\ 无以至今日。\end{CJK*}
     
\section{References}

\begin{flushleft}
\begin{tabular}{ll}
\text{[AKL]} & Noga Alon, Noah Kravitz, and Matt Larson. Inverse problems for minimal complements\\
&and maximal supplements. Preprint at \url{https://arxiv.org/abs/2006.00534}, (2020).\\
\text{[BL]} & Amanda Burcroff and Noah Luntzlara. Sets arising as minimal additive complements in\\
& the integers. Preprint at \url{https://arxiv.org/abs/2006.12481}, (2020).\\
\text{[BSa]} & Arindam Biswas and Jyoti Prakash Saha. On non-minimal complements. Preprint avail-\\
&able at \url{https://arxiv.org/abs/2007.08507}, (2020).\\
\text{[BSb]} & Arindam Biswas and Jyoti  Prakash Saha. Asymptotic behaviour of minimal compleme-\\
&nts. Preprint available at \url{https://arxiv.org/abs/2007.14389}, (2020).\\
\text{[CY]} & Yong-Gao Chen and Quan-Hui Yang. On a problem of Nathanson related to minimal ad-\\
&ditive complements. \textit{SIAM Journal on Discrete Mathematics}, 26(4): 1532--1536, (2012).\\
\text{[G]} & \"Uber die Abgrenzung der Eigenwerte einer Matrix. Izv. Akad. Nauk. USSR Otd. Fiz.\\
& -Mat. Nauk 6 (1931), 749--754.\\
\text{[K]} & Andrew Kwon. 
A note on minimal additive complements of integers. \emph{Discrete Mathem-}\\
& \emph{atics},
342(7): 1912--1918, (2019). Preprint at \url{https://arxiv.org/abs/1708.01287}.\\
\text{[KSY]} & S\'andor Kiss, Csaba S\'andor, and Quan-Hui Yang. On minimal additive complements of\\
&integers. \textit{Journal of Combinatorial Theory, Series A}, 162: 344--353, (2019).\\
\text{[N]} & Melvyn Nathanson. Problems in additive number theory, IV: Nets in groups and shorte-\\
& st length $g$-adic representations. \textit{International Journal of Number Theory}, 7(8): \\
& 1999--2017, (2011).
\end{tabular}
\end{flushleft}

\end{document}